\documentclass[11pt,reqno]{amsart}
\usepackage{enumitem}
\usepackage{amsfonts,amssymb,amsthm}
\usepackage{amsmath}
\usepackage[margin=1.3in]{geometry}
\usepackage[
linktocpage=true,colorlinks,citecolor=magenta,linkcolor=blue,urlcolor=magenta]{hyperref}
\usepackage{amsrefs}
\newtheorem{proposition}{Proposition}[section]
\newtheorem{theorem}[proposition]{Theorem}
\newtheorem{lemma}[proposition]{Lemma}
\newtheorem{corollary}[proposition]{Corollary}
\theoremstyle{definition}
\newtheorem{definition}[proposition]{Definition}
\theoremstyle{remark}

\numberwithin{equation}{section}
\allowdisplaybreaks

\newcommand{\Hh}{\mathbb H}
\newcommand{\Sph}{\mathbb S}
\newcommand{\Ric}{\operatorname{Ric}}
\newcommand{\tr}{\operatorname{tr}}
\newcommand{\Id}{\operatorname{Id}}
\newcommand{\eps}{\varepsilon}
\newcommand{\CC}{\mathcal C}

\author[T. Luo]{Tianci Luo}
\author[Y. Wei]{Yong Wei}
\author[R. Zhou]{Rong Zhou}
\address{School of Mathematical Sciences, University of Science and Technology of China, Hefei 230026, P.R. China}
\email{Luo\_tianci@mail.ustc.edu.cn}
\email{yongwei@ustc.edu.cn}
\email{zhourong@mail.ustc.edu.cn}

\date{\today}

\begin{document}

\title[\texorpdfstring{$\tau$}{tau}-bi-Ricci curvature and MCF surgery]{Positive \texorpdfstring{$\tau$}{tau}-bi-Ricci curvature and Mean curvature flow with surgery in hyperbolic space}

\subjclass[2020]{53E10, 53C42, 53C21}
\keywords{mean curvature flow, hyperbolic space, bi-Ricci curvature,  surgery, cylindrical estimate}

\begin{abstract}
Let $n\geq3$ and $0\leq\tau\leq2$.  We prove that every smooth closed
connected immersed hypersurface in hyperbolic space whose induced metric has
positive $\tau$-bi-Ricci curvature admits a mean curvature flow with surgery
which has only finitely many surgery times and terminates.
In the range compatible with cylindrical necks, the key ingredients are a
preserved quantitative spectral pinching condition, which converts the
intrinsic hypothesis into uniform two-convexity, and cylindrical and
derivative estimates that remain valid across the hyperbolic standard neck
replacement.  At the endpoint $n=3$, $\tau=2$, positive $\tau$-bi-Ricci
curvature is positive Ricci curvature and forces strict convexity, so the
ordinary mean curvature flow converges to a round point.  Consequently, the
underlying manifold is diffeomorphic to a sphere or to a finite connected sum
of copies of $\Sph^{n-1}\times\Sph^1$.  If the initial hypersurface is embedded
and bounds a compact domain, that domain is a one-handlebody, namely a ball
with finitely many one-handles attached.
\end{abstract}

\maketitle
\setcounter{tocdepth}{1}
\tableofcontents

\section{Introduction}

Mean curvature flow provides a way to convert local curvature hypotheses into
global information about the topology of a hypersurface.  Huisken
\cites{Hui84,Hui86} proved round-point convergence for closed convex
hypersurfaces in Euclidean space and for suitably convex hypersurfaces in
Riemannian ambient manifolds.  For nonconvex two-convex hypersurfaces in
Euclidean space, Huisken and Sinestrari
\cites{HS99sing,HS99conv,HS09} developed the convexity, cylindrical and
derivative estimates that make it possible to identify high-curvature necks
and replace them by standard caps.  Andrews, Langford and McCoy \cite{ALM14}
and Andrews and Langford \cite{AL14} extended the convexity and cylindrical
estimates to broad classes of fully nonlinear convex curvature flows.
Haslhofer and Kleiner \cite{HK17mean} developed a local structure theory for
mean-convex and $k$-convex mean curvature flows.  In the surgery setting,
Brendle and Huisken \cite{BH16} treated closed embedded mean-convex surfaces
in $\mathbb R^3$, and Haslhofer and Kleiner \cite{HK17} gave a local
construction based on noncollapsing estimates.  Brendle and Huisken
\cite{BH17} extended the surgery picture to two-convex hypersurfaces in
general Riemannian ambient manifolds by using a fully nonlinear flow adapted
to the ambient curvature. More recently, Sz\'ekelyhidi
\cite{Szekelyhidi26} constructed a mean curvature flow with surgery starting
from any compact mean-convex hypersurface in Euclidean space.  In this
construction the topological changes occur through nondegenerate cylindrical
singularities.

Related estimates have also been obtained in compact space forms and
symmetric spaces.  Nguyen \cite{Nguyen15} proved convexity and cylindrical
estimates for mean curvature flow in the round sphere, and Pipoli and
Sinestrari \cite{PS17} established cylindrical estimates in compact rank-one
symmetric spaces.  Langford and Nguyen \cite{LN21} constructed a terminating
mean curvature flow with surgery for quadratically pinched hypersurfaces of
the sphere.  More recently, they \cite{LN25} established
asymptotically sharp quadratic pinching, cylindrical, and derivative
estimates for mean curvature flow in the sphere, with applications to
convexity estimates and singularity models.  Their pinching condition is
extrinsic and is related to, but distinct from, two-convexity.  In hyperbolic
space, Andrews and Chen
\cite{AC17}*{Theorem~2} proved round-point convergence for embedded
hypersurfaces of positive intrinsic Ricci curvature, while Ji \cite{Ji21}
obtained a cylindrical estimate under an extrinsic pinching hypothesis.

The result of Andrews and Chen \cite{AC17} is the closest smooth-flow analogue
of the present paper.  In dimension three, positive $2$-bi-Ricci curvature is
exactly positive Ricci curvature.  At this endpoint the condition forces local
strict convexity.  The hyperbolic Hadamard--Stoker theorem \cite{Alexander77}  then shows that the initial hypersurface is embedded, so the smooth round-point theorem
of Andrews and Chen applies.  Away from this endpoint, the $\tau$-bi-Ricci
conditions are compatible with two-convex but nonconvex hypersurfaces and with
cylindrical high-curvature regions.  The present work treats immersed initial
data and seeks a topological classification in the regime where neck
singularities can occur.

\subsection{Main result}

This paper asks whether an intrinsic curvature inequality can supply all of
the structure required by surgery and remain valid when a neck is replaced by
caps.  Let $X_0:M^n\to\Hh^{n+1}$ be a smooth closed connected immersion,
where $n\geq3$ and $\Hh^{n+1}$ has sectional curvature $-1$.  For orthonormal tangent
vectors $u,v\in TM$, set
\begin{equation}\label{eq:intro-taubiric}
 \operatorname{BiRic}_{\tau}(u,v)
 =\Ric(u,u)+\Ric(v,v)-\tau\operatorname{Sec}(u\wedge v).
\end{equation}
At $\tau=1$, this is the bi-Ricci curvature introduced by Shen and Ye
\cite{SY96} in their study of stable minimal surfaces.  It is also the $m=2$
intermediate curvature of Brendle, Hirsch and Johne \cite{BHJ24}.  At
$\tau=0$, positivity is equivalent to positivity of the sum of the two
smallest eigenvalues of the Ricci tensor, namely the two-positive Ricci
condition studied in \cite{CW22,CM26,Wolf}.  The parameter $\tau$
therefore connects two intrinsic curvature conditions with different
topological origins.  For hypersurfaces, the Gauss equation turns this family
into a condition on the second fundamental form and reveals exactly
which part of it is compatible with the cylindrical neck model.

If $\lambda_1\leq\cdots\leq\lambda_n$ are the principal curvatures, the
Gauss equation expresses the intrinsic condition in terms of the second
fundamental form.  Positivity of \eqref{eq:intro-taubiric} is equivalent to
\begin{equation*}
 P_{ij}^{(\tau)}
 =H(\lambda_i+\lambda_j)-\lambda_i^2-\lambda_j^2
  -\tau\lambda_i\lambda_j-\bigl(2(n-1)-\tau\bigr)>0
 \qquad(i\ne j).
\end{equation*}
Summing these inequalities gives positive scalar curvature.  The scalar Gauss
equation then implies that the mean-curvature vector never vanishes.  Hence the
normal bundle is trivial, and we choose the global normal so that $H>n$.  With
this orientation, we evolve the immersion by
\begin{equation}\label{eq:mcf}
\begin{cases}
    \partial_tX=-H\nu,\\
    X(\cdot,0)=X_0.
\end{cases}
\end{equation}
The condition is intrinsic, but the formula for $P_{ij}^{(\tau)}$ supplies the
extrinsic control needed by the flow.  Its quantitative form implies uniform
two-convexity, while it does not require pointwise convexity.  Cylindrical
singularities therefore remain possible and must be treated by surgery.

There are two restrictions relevant to the surgery construction.  The
invariant-region argument for the smooth flow uses the concavity of the
spectral inequalities defining $P_{ij}^{(\tau)}$ and requires
$0\leq\tau\leq2$.  The cylindrical neck model imposes a second restriction.
Let $\mathcal T_\rho$ be the tube of radius $\rho$ about a geodesic in
$\Hh^{n+1}$.  With the mean-convex orientation its principal curvatures are
$(\tanh\rho,\coth\rho,\ldots,\coth\rho)$.  For two spherical principal
directions, the Gauss expression gives
\begin{equation*}
 P_{ij}^{(\tau)}
 =\bigl(2(n-2)-\tau\bigr)\operatorname{csch}^2\rho.
\end{equation*}
Thus the tubular cylinder has strictly positive $\tau$-bi-Ricci curvature
exactly when $\tau<2(n-2)$.  As $\rho\downarrow0$, these tubes are the
hyperbolic models whose blow-ups converge to the round Euclidean cylinder.
We call
\begin{equation*}
 \mathcal I_n=
 \begin{cases}
 [0,2),&n=3,\\
 [0,2],&n\geq4
 \end{cases}
\end{equation*}
the nonconvex surgery range.  Equivalently, $\mathcal I_n=[0,2]\cap\{\tau:\tau<2(n-2)\}$.  
The bound $\tau\leq2$ is required by the smooth-flow invariant region, while
the strict inequality is imposed by the neck model.  At $n=3$, $\tau=2$,
the cylindrical tube has $P_{23}^{(2)}=0$ for every radius.  Hence a standard
neck replacement cannot preserve strict positive $2$-bi-Ricci curvature.
This endpoint is nevertheless included in the main theorem through a
separate smooth-flow argument.

The main theorem gives both the flow and the resulting topological
classification.

\begin{theorem}
\label{thm:taubiric-main}
Let $n\geq3$ and $0\leq\tau\leq2$.  Let
$X_0:M^n\to\Hh^{n+1}$ be a smooth closed connected immersion whose induced
metric has positive $\tau$-bi-Ricci curvature.  Then there is a mean curvature
flow with surgery starting from $X_0$ which has only finitely many surgery
times and terminates.  When $n=3$ and $\tau=2$, the flow
may be taken to be smooth and it converges to a round point.  Moreover,
\begin{equation*}
 M\cong\Sph^n
 \quad\text{or}\quad
 M\cong\mathop{\#}_{j=1}^{k}
       \bigl(\Sph^{n-1}\times\Sph^1\bigr)
\end{equation*}
for some $k\geq1$.  If $X_0$ is embedded and bounds a compact domain
$\Omega_0$, the surgery flow may be chosen embedded and $\Omega_0$ is a
one-handlebody, namely a ball with finitely many one-handles attached.
\end{theorem}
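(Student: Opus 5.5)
The proof splits into two cases: the endpoint $(n,\tau)=(3,2)$, and the nonconvex surgery range $\tau\in\mathcal I_n$.

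\emph{Endpoint case.} When $n=3$ and $\tau=2$, positive $2$-bi-Ricci curvature is positive Ricci curvature. Using $H=\lambda_1+\lambda_2+\lambda_3$, the Gauss equation gives
\begin{equation*}
\Ric(e_i,e_i)=\lambda_i(H-\lambda_i)-2=\lambda_i\lambda_j+\lambda_i\lambda_k-2,
\end{equation*}
where $\{i,j,k\}=\{1,2,3\}$. First I show that all $\lambda_i>1$. Positivity of $\Ric(e_1,e_1)$ forces $\lambda_1(\lambda_2+\lambda_3)>2$, and combining this with the other two inequalities should exclude $\lambda_1\le 1$. I would argue by contradiction: assume $\lambda_1\le 1$, note that $\lambda_2+\lambda_3\ge\lambda_1+\lambda_1$ together with $H>3$ gives a contradiction after a short case check, and so conclude $\lambda_1>1$. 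This is the hyperbolic horoconvexity threshold, or at least strict convexity. The hyperbolic Hadamard--Stoker theorem \cite{Alexander77} then gives that $X_0$ is an embedded sphere bounding a convex ball. If $\lambda_1>1$ is too strong, the weaker strict convexity still suffices for embeddedness. Andrews--Chen \cite{AC17}*{Theorem~2} then gives smooth convergence to a round point. Hence $M\cong\Sph^3$, and $\Omega_0$ is a ball, which is a one-handlebody with zero handles.

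\emph{Surgery range.} For $\tau\in\mathcal I_n$ I follow the Huisken--Sinestrari / Brendle--Huisken program, in four steps.
\begin{enumerate}[label=(\roman*)]
\item Prove that a quantitative version of the condition is preserved. Compactness gives $\eps>0$ with $P^{(\tau)}_{ij}\ge\eps H^2$ initially, or a similar normalized form. Using concavity of $\lambda\mapsto\min_{i\ne j}P_{ij}^{(\tau)}$ for $0\le\tau\le 2$, Hamilton's maximum principle for the evolution of $h_{ij}$ in $\Hh^{n+1}$ shows the condition persists. The reaction terms from ambient curvature $-1$ have the correct sign because the constant $2(n-1)-\tau$ is absorbed.
\item Deduce uniform two-convexity, $\lambda_1+\lambda_2\ge\beta H$, together with $H>n$. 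This should follow from $P_{12}^{(\tau)}>0$, which bounds $\lambda_1^2+\lambda_2^2$ above by $H(\lambda_1+\lambda_2)$.
\item Establish a noncollapsing, or immersed-local, version of the convexity, cylindrical and gradient estimates in the form of \cite{HS09,BH17}: $|A|^2-\frac1{n-1}H^2\le\eta H^2+C_\eta$ on regions where $\lambda_1\le\eta H$, and $|\nabla A|\le CH^2$.
\item Run the surgery algorithm. Necks are detected by the cylindrical estimate and replaced by standard hyperbolic caps. I must check that the capped hypersurface still satisfies the quantitative pinching with the same constants. This is where $\tau<2(n-2)$ is used: on the model tube $P_{ij}^{(\tau)}=(2(n-2)-\tau)\operatorname{csch}^2\rho$ is strictly positive, which gives room for the cap perturbation. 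The cap interpolation must keep $P_{ij}$ bounded below by a fixed fraction of $H^2$.
\end{enumerate}

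Since $H>n$ and the area decreases at least linearly under $H^2$-weighted evolution, each surgery removes a definite amount of area, so there are only finitely many surgery times. The flow terminates when every component is convex, and convex components are spheres or $\Sph^{n-1}\times\Sph^1$ necks and are discarded.

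\emph{Topology.} Reversing the surgeries shows that $M$ is obtained from finitely many $\Sph^n$'s and $\Sph^{n-1}\times\Sph^1$'s by connected sums along necks. Since $M$ is connected, $M\cong\Sph^n$ or $\#_k(\Sph^{n-1}\times\Sph^1)$. In the embedded case, surgery can be performed inside $\Omega_t$, which preserves embeddedness, and each surgery cuts a one-handle. So $\Omega_0$ is a union of balls joined by one-handles, and connectedness makes it a single ball with handles.

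\emph{Main obstacle.} I expect the hardest step to be (iii) and (iv) in the hyperbolic, immersed setting. The derivative and cylindrical estimates must be uniform across surgeries, with constants that do not degrade when caps are inserted. Without embeddedness, noncollapsing in the sense of Andrews is unavailable. My first attempt would be to use the Huisken--Sinestrari Stampacchia iteration, which works for immersions, with the extra ambient terms treated as lower order since $H\to\infty$ at surgery scales.
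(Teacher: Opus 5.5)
Your overall architecture matches the paper's: a separate smooth treatment of $(n,\tau)=(3,2)$, a preserved quantitative form of $P^{(\tau)}_{ij}>0$, two-convexity as a consequence, cylindrical and derivative estimates, a re-check of the pinching on the inserted cap, area-based termination, and reversal of the surgeries. The genuine gap is step (iii), which is the main new analytic content, and the mechanism you suggest for it does not work as stated. The Huisken--Sinestrari $L^p$/Stampacchia scheme needs a Poincar\'e-type inequality for test functions supported in the acylindrical set $\{|A|^2-\frac1{n-1}H^2\ge\eta H^2\}$. That inequality rests on a pointwise coercive lower bound for the algebraic term in the Simons identity. This term is either the full commutator, with principal components $(\lambda_j-\lambda_i)(\lambda_i\lambda_j-\kappa^2)$, or Huisken's traced quantity $\tfrac12\sum_{i,j}(\lambda_i\lambda_j-\kappa^2)(\lambda_i-\lambda_j)^2$.

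In hyperbolic space both vanish on the configurations $(b,\dots,b,a,\dots,a)$ with $ab=\kappa^2$, which are the principal curvatures of tubes about totally geodesic $\Hh^k$. When $b$ has multiplicity $q\ge2$ these points are uniformly two-convex, since $\lambda_1+\lambda_2=2b$, and can be strongly acylindrical. For example, take $n=4$, $\lambda/H=(0.08,0.08,0.42,0.42)$ and $\kappa^2/H^2=0.0336$. So two-convexity alone cannot give the Poincar\'e inequality.

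These points have $\kappa/H$ bounded below, so they occur at bounded curvature. The heuristic that ambient terms are lower order because $H\to\infty$ at surgery scales therefore does not apply: the integral estimates are global in space-time, and $f_+$ can be supported exactly there. At the least you would need a truncation keeping the test function away from $\{\kappa\gtrsim\beta H\}$, which you do not set up. Noncollapsing is unavailable for immersions, and the Brendle--Huisken Riemannian theory uses a different, fully nonlinear flow, so neither reference fills the gap.

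The missing idea is to \emph{retain} the quantitative $\tau$-bi-Ricci inequality instead of passing to two-convexity. For two indices in the $b$-eigenspace one computes $P^{(\tau)}_{ij}/H^2=(2q-2-\tau)\,b(b-a)\le0$, which contradicts the positive lower bound. Your form $P^{(\tau)}_{ij}\ge\eps H^2$ works here as well as the paper's $\alpha_0H|\lambda_p-\lambda_q|$. The umbilic and $q=1$ configurations satisfy $|A|^2\le H^2/(n-1)$, so they are not acylindrical. Compactness in the normalized variables $(\lambda/H,\kappa/H)$ then gives $|\mathfrak S|^2\ge\gamma H^6$ uniformly for $\kappa\in[0,1]$. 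Contracting the Simons identity against $u^2H^{-4}\mathfrak S$ and integrating by parts yields the Poincar\'e inequality, and only then do the $L^p$ bounds, Stampacchia iteration and restart across surgeries go through.

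Several smaller claims are wrong:
\begin{enumerate}[label=\textup{(\alph*)},leftmargin=*]
\item At the endpoint, $\lambda_1>1$ is false: $(\lambda_1,\lambda_2,\lambda_3)=(\tfrac12,10,10)$ has positive Ricci curvature. What is true is strict convexity, which takes one line: if $\lambda_i\le0$ then $H-\lambda_i>0$, so $\Ric(e_i,e_i)=\lambda_i(H-\lambda_i)-2\le-2$.
\item In (i), $P^{(\tau)}_{ij}$ is not concave in $\lambda$; its second derivative in the direction $e_i+te_k$, $k\ne i,j$, equals $2t$. You must divide by $H$. The quotient $P^{(\tau)}_{ij}/H$ is concave because $a_i^2+a_j^2+\tau a_ia_j\ge0$ for $0\le\tau\le2$.
\item The ambient reaction is not handled by ``absorbing'' $2(n-1)-\tau$. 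The reaction of $P^{(\tau)}_{ij}$ is $2|A|^2P^{(\tau)}_{ij}+\kappa^2(\mathcal S_i+\mathcal S_j+2\tau E_{ij})$. The bracket is a convex combination of two nonnegative expressions exactly when $0\le\tau\le2$.
\item At surgery the pinching constant must be reduced once and then preserved at that reduced value, not kept ``the same''.
\item The definite area loss per surgery comes from fixing the surgery radius $r_*$, while $H>n$ gives the finite total time.
\item Necks are not convex. The terminal components are convex spheres or recognized (capped) necks.
\end{enumerate}
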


The topological alternatives in Theorem~\ref{thm:taubiric-main} are familiar
from two-convex surgery.  The new point is the intrinsic curvature
hypothesis under which they are obtained.  Positive bi-Ricci curvature,
and more generally positive intermediate curvature, allows substantial
topological flexibility on abstract Riemannian manifolds.  For instance,
positive bi-Ricci curvature is preserved under connected sums
\cite{ShenYe97}, while two-positive Ricci curvature occurs on broad
classes of highly connected manifolds \cite{CW22} and, in
general, yields only weaker restrictions on the fundamental group
\cite{Wolf}.  See also the recent work \cite{MWY26} on the topology
of manifolds with positive intermediate curvature.  For hypersurfaces
in hyperbolic space, however, the Gauss equation converts the intrinsic
$\tau$-bi-Ricci condition into a preserved quantitative spectral
pinching condition.  It is this additional hypersurface structure that
leads to the diffeomorphism classification above.

\subsection{Outline of the proof}

For $\tau\in\mathcal I_n$, the proof follows the surgery construction of
Huisken and Sinestrari \cite{HS09} and its space-form adaptation by Langford
and Nguyen \cite{LN21}.  It proceeds in four steps.  The surgery argument
differs from the preceding theories in three places.  We preserve a
quantitative form of the original intrinsic curvature condition, prove the
high-curvature estimates uniformly as the ambient hyperbolic curvature
disappears under rescaling, and verify the same intrinsic condition directly
on the surgically inserted cap.  The endpoint $n=3$, $\tau=2$ is treated separately by the smooth-flow argument in Section~\ref{sec:topology}.

First, Section~\ref{sec:preliminaries} converts the intrinsic hypothesis into
a preserved spectral condition.  Positive $\tau$-bi-Ricci curvature makes
the mean-curvature vector nonzero and therefore determines a global
mean-convex orientation with $H>n$.  Compactness
strengthens the strict inequalities to
\begin{equation*}
 P_{ij}^{(\tau)}
 \geq\alpha_0H|\lambda_p-\lambda_q|
 \qquad(i\ne j,\ p,q\text{ arbitrary})
\end{equation*}
for some $\alpha_0>0$.  The associated set of Weingarten maps is closed,
convex and $O(n)$-invariant.  Hamilton's vector-bundle maximum principle
\cite{Ham86}, in the form used by Andrews and Hopper \cite{AH11}*{Section 7.4} and Andrews
and Chen \cite{AC17}*{Proposition 4}, preserves this set along the smooth flow.  In
particular,
\begin{equation*}
 \lambda_1+\lambda_2\geq\beta H
\end{equation*}
for some $\beta>0$.  The stronger inequality involving $P_{ij}^{(\tau)}$ is
retained rather than discarded after two-convexity is obtained.  It is used
again in both the cylindrical estimate and the surgery calculation.  This
also distinguishes the present condition from the extrinsic quadratic
pinching in \cite{LN21}, which is invariant under reversal of the normal.

Second, Sections~\ref{sec:cylindrical} and
\ref{sec:gradient-estimates} establish the estimates needed to recognize
high-curvature necks.  The central point is a cylindrical estimate whose
constants are uniform for ambient sectional curvature $-\kappa^2$ with
$0\leq\kappa\leq1$.  On the region where
\begin{equation*}
 |A|^2-\frac1{n-1}H^2\geq\eta H^2,
\end{equation*}
the quantitative $\tau$-bi-Ricci inequality rules out every algebraic zero of
the normalized Simons commutator.  Compactness then gives the coercive bound
$|\mathfrak S|^2\geq\gamma H^6$.  Combining this bound with the space-form
Simons identity gives a Poincar\'e-type inequality on the acylindrical region.
The weighted evolution inequality and Stampacchia iteration yield
\begin{equation*}
 |A|^2-\frac1{n-1}H^2
 \leq\eta H^2+C_\eta R^{-2}.
\end{equation*}
The evolution equations for the gradient and Hessian of $A$ then give the
higher derivative bounds.  A blow-up at a point with large $H$ has rescaled
ambient curvature tending to zero.  The uniform estimates pass to this
Euclidean limit, where the strong maximum principle identifies the limiting
flow as a round shrinking cylinder.  This proves the neck-detection theorem.

Third, Section~\ref{sec:standard-surgery} shows that the quantitative
$\tau$-bi-Ricci inequality survives the standard neck replacement.  On the
 normalized cylinder, the two possible leading margins in
 $P_{ij}^{(\tau)}$ are $n-2$ and $2(n-2)-\tau$.  Within the analytically
 allowed range $0\leq\tau\leq2$, their positivity gives exactly
 $\tau\in\mathcal I_n$.  The standard cap has a positive leading margin for
 $0\leq\tau\leq2$, and the same remains true along the bending and rotational
 interpolation.  At surgery scale $r$, the errors from hyperbolic
normal coordinates are of lower order after rescaling.  By choosing the
surgery curvature sufficiently large, we absorb these errors and reduce the
pinching constant only once.  The resulting constant is independent of the
number of later surgeries.  This direct intrinsic verification is additional
to the standard preservation of two-convexity.

Finally, the cylindrical and derivative estimates are extended across
surgery times, and the neck-continuation and neck-selection arguments are
carried out as in \cites{HS09,LN21}, with the hyperbolic replacement estimates
compared with \cite{BH17}.  Each surgery at the fixed surgery scale decreases
area by a definite amount, while the positive lower bound for $H-n$ gives a
uniform upper bound for the total elapsed time.  Hence the process terminates
after finitely many replacements.  Reversing the neck cuts gives the
connected-sum classification in Theorem~\ref{thm:taubiric-main}.  In the
embedded case, the same reconstruction shows that the enclosed domain is a
one-handlebody. 

\subsection{Organization of the paper}

Section~\ref{sec:preliminaries} develops the invariant $\tau$-bi-Ricci region
and its elementary consequences.  Section~\ref{sec:cylindrical} proves the
cylindrical estimate, and Section~\ref{sec:gradient-estimates} proves the
derivative estimates and neck detection for smooth flows.
Section~\ref{sec:standard-surgery} checks the standard replacement and
extends the estimates to surgically modified flows.
Section~\ref{sec:parameter-choice} constructs the terminating flow, and
Section~\ref{sec:topology} treats the smooth endpoint separately, deduces the
topology, and completes the proof of Theorem~\ref{thm:taubiric-main}.

\section{Preservation of the \texorpdfstring{$\tau$}{tau}-bi-Ricci condition}
\label{sec:preliminaries}

Throughout Sections~2--4 the ambient manifold is the simply connected space form $\Hh^{n+1}_{\kappa}$ of sectional curvature $-\kappa^2$, where $0\leq\kappa\leq1$. The value $\kappa=0$ denotes Euclidean space. This section fixes the hypersurface notation in this uniform normalization. It also relates the intrinsic $\tau$-bi-Ricci condition to the principal curvatures, proves its preservation under the smooth flow, and records the bounds used later in the surgery construction.

\subsection{Hypersurfaces and mean curvature flow}

For $\kappa>0$, geodesic polar coordinates about a fixed point identify
$\Hh^{n+1}_{\kappa}\setminus\{o\}$ with
$\Sph^n\times(0,\infty)$ and give the metric
\begin{equation*}
 \bar g_{\kappa}=dr^2+\kappa^{-2}\sinh^2(\kappa r)g_{\Sph^n}.
\end{equation*}
The limit $\kappa=0$ gives the Euclidean polar metric
$dr^2+r^2g_{\Sph^n}$.  
Let $X:M^n\to\Hh^{n+1}_{\kappa}$ be a smooth two-sided immersion and choose a
unit normal $\nu$.  We denote the Levi-Civita connections of $\bar g_\kappa$
and the induced metric $g$ by $\bar\nabla$ and $\nabla$, respectively.  In
local coordinates on $M$, with $\partial_i=\partial X/\partial x^i$, set
\begin{equation*}
 g_{ij}=\bar g_\kappa(\partial_i,\partial_j),
 \qquad
 h_{ij}=\bar g_\kappa(\bar\nabla_{\partial_i}\nu,\partial_j).
\end{equation*}
Thus $h$ is the second fundamental form and
$A=(h_i{}^j)$, where $h_i{}^j=g^{jk}h_{ik}$, is the Weingarten map.  Our sign
convention makes geodesic spheres positively curved.  The eigenvalues of $A$
are the principal curvatures, ordered as
\begin{equation*}
 \lambda_1\leq\cdots\leq\lambda_n.
\end{equation*}
We write
\begin{equation*}
 H=\tr A=\sum_i\lambda_i,
 \qquad
 |A|^2=\tr(A^2)=\sum_i\lambda_i^2,
 \qquad
 (h^2)_{ij}=h_i{}^kh_{kj}.
\end{equation*}
The Gauss and Codazzi equations take the form
\begin{align}
 R_{ijkl}
 &=h_{ik}h_{jl}-h_{il}h_{jk}
   -\kappa^2(g_{ik}g_{jl}-g_{il}g_{jk}),
   \label{eq:gauss-space-form}\\
 \nabla_i h_{jk}&=\nabla_jh_{ik}.\notag
\end{align}

Now let $X:M\times[0,T)\longrightarrow\Hh^{n+1}_{\kappa}$
solve the mean curvature flow \eqref{eq:mcf}, and write
$M_t=X(M,t)$.  We use the standard variation formulas for hypersurfaces
moving by mean curvature.   See \cite{Ecker04,HS09,Hui86,AC17,LN21}.
\begin{align}
 \partial_tg_{ij}=&~-2Hh_{ij},\nonumber\\
 \partial_t d\mu=&~-H^2d\mu, \label{s2.evl-dmu}\\
    \partial_th_{ij}
 =&~\nabla_i\nabla_jH-H\bigl((h^2)_{ij}+\kappa^2g_{ij}\bigr)\nonumber\\
 \partial_th_{ij}  =&~\Delta h_{ij}-2H\bigl((h^2)_{ij}+\kappa^2g_{ij}\bigr)
  +(|A|^2+n\kappa^2)h_{ij},\label{eq:h-direct-evolution}\\
 (\partial_t-\Delta)H=&~(|A|^2-n\kappa^2)H,\label{eq:H-evol}\\
 (\partial_t-\Delta)H^2=&-2|\nabla H|^2
   +2H^2(|A|^2-n\kappa^2),\label{eq:H2-evol}\\
 (\partial_t-\Delta)|A|^2=&-2|\nabla A|^2
   +2|A|^2(|A|^2-n\kappa^2)
   +4n\kappa^2\left(|A|^2-\frac1nH^2\right)\label{eq:A2-evol}\\
 (\partial_t-\Delta)|\nabla A|^2
 \leq&-2|\nabla^2A|^2+c(|A|^2+\kappa^2)|\nabla A|^2,\label{eq:derivative-evolutions}\\
 (\partial_t-\Delta)|\nabla^2A|^2
 \leq&-2|\nabla^3A|^2
 +c\bigl((|A|^2+\kappa^2)|\nabla^2A|^2+|\nabla A|^4\bigr), \label{eq:derivative-evolutions-2}
\end{align}
where $c=c(n)$.

For the preservation argument we use the time-dependent connection introduced in this context by Andrews--Baker \cite{AB10}, see also
\cite{LN21}*{Section~2.2}. Regard $h(t)$ as a section of
$\operatorname{Sym}^2(T^*M)$ and equip the time-dependent tangent bundle with the temporal connection
\begin{equation*}
 \nabla_t\partial_i=-Hh_i{}^j\partial_j.
\end{equation*}
The metric evolution gives $\nabla_t g=0$. With respect to this connection, the second fundamental form satisfies
\begin{equation}\label{eq:nabla-t-h}
 \nabla_th_{ij}=\Delta h_{ij}+(|A|^2+n\kappa^2)h_{ij}
 -2\kappa^2Hg_{ij}.
\end{equation}
The reaction field in this equation is
\begin{equation*}
 \mathcal Q_{\kappa}(A)=(|A|^2+n\kappa^2)A
 -2\kappa^2(\tr A)\Id.
\end{equation*}
In a principal frame its eigenvalue vector is
\begin{equation*}
 Q_{\kappa,i}=\lambda_i(|A|^2+n\kappa^2)-2\kappa^2H.
\end{equation*}
For a differentiable function $\varphi$ of the principal curvatures, we
write
\begin{equation*}
 Q_\kappa\varphi=\sum_{i=1}^nQ_{\kappa,i}
 \frac{\partial\varphi}{\partial\lambda_i}.
\end{equation*}
This is the form of the equation to which Hamilton's vector-bundle maximum principle \cite{Ham86} applies. The same formulation is used in related preservation arguments for curvature flows. Compare
\cite{HS09}*{Proposition~2.6} and \cite{AC17}*{Proposition~4}.

\subsection{The invariant \texorpdfstring{$\tau$}{tau}-bi-Ricci pinching region}

Fix $0\leq\tau\leq2$. For orthonormal tangent vectors $u,v$, let
$\operatorname{BiRic}_{\tau}(u,v)$ be the curvature quantity defined in \eqref{eq:intro-taubiric}.  In an orthonormal principal frame, the Gauss equation \eqref{eq:gauss-space-form} gives
\begin{align}
K_{ij}=&~\lambda_i\lambda_j-\kappa^2 ,\label{s2.Kij}\\
\rho_i=&~\lambda_i(H-\lambda_i)-(n-1)\kappa^2, \label{s2.rhoi}
\end{align}
and we put
\begin{equation}\label{eq:Ptau-definition}
 P_{ij}^{(\tau)}
 :=\rho_i+\rho_j-\tau K_{ij}.
\end{equation}

\begin{lemma}[Reduction to principal directions]
The induced metric has nonnegative, respectively positive,
$\tau$-bi-Ricci curvature if and only if
\begin{equation*}
 P_{ij}^{(\tau)}\geq0,
 \qquad\text{respectively }P_{ij}^{(\tau)}>0,
\end{equation*}
for every $i\ne j$ in a principal frame.
\end{lemma}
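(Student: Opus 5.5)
The plan is to compute $\operatorname{BiRic}_\tau(u,v)$ for arbitrary orthonormal $u,v$ in a principal frame $e_1,\dots,e_n$, where $A e_i=\lambda_i e_i$, and then minimize over $u,v$. The direction ``only if'' is immediate: taking $u=e_i$, $v=e_j$, the Gauss equation \eqref{eq:gauss-space-form} gives $\Ric(e_i,e_i)=\rho_i$ from \eqref{s2.rhoi} and $\operatorname{Sec}(e_i\wedge e_j)=K_{ij}$ from \eqref{s2.Kij}. Hence $\operatorname{BiRic}_\tau(e_i,e_j)=P^{(\tau)}_{ij}$.

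For the converse we write things in terms of $A$. Since $\Ric=HA-A^2-(n-1)\kappa^2\,\Id$ is diagonal in the principal frame, we get
\begin{equation*}
\Ric(u,u)+\Ric(v,v)=\tr\bigl(\Ric|_{\Pi}\bigr),\qquad \Pi=\operatorname{span}\{u,v\},
\end{equation*}
and $\operatorname{Sec}(\Pi)=\det(A|_\Pi)-\kappa^2$, where $A|_\Pi$ denotes the compression $\pi_\Pi A\pi_\Pi$. Both quantities depend only on the plane $\Pi$. The task is therefore to show that the function
\begin{equation*}
F(\Pi)=\tr\bigl(\Ric|_\Pi\bigr)-\tau\det(A|_\Pi)+\tau\kappa^2
\end{equation*}
on the Grassmannian $G(2,n)$ attains its minimum at a coordinate plane $\operatorname{span}\{e_i,e_j\}$.

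The key device is to write $\Pi$ via orthonormal vectors and express $F$ through the Plücker coordinates $p_{ij}$ of the unit bivector $u\wedge v$, which satisfy $\sum_{i<j}p_{ij}^2=1$. The trace term is linear in the $p_{ij}^2$:
\begin{equation*}
\tr(\Ric|_\Pi)=\sum_{i<j}p_{ij}^2(\rho_i+\rho_j).
\end{equation*}
This follows because $|u_k|^2+|v_k|^2=\sum_{j\ne k}p_{kj}^2$. For the sectional curvature we use the curvature operator of the hypersurface. Since $R=A\wedge A-\kappa^2\Id\wedge\Id$ is diagonal on the basis $e_i\wedge e_j$ with eigenvalues $K_{ij}$, we have $\operatorname{Sec}(\Pi)=\sum_{i<j}K_{ij}p_{ij}^2$. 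Combining the two,
\begin{equation*}
\operatorname{BiRic}_\tau(u,v)=\sum_{i<j}p_{ij}^2\,P_{ij}^{(\tau)},
\end{equation*}
which is a convex combination of the $P^{(\tau)}_{ij}$. Nonnegativity or positivity of all $P^{(\tau)}_{ij}$ therefore gives the same for $\operatorname{BiRic}_\tau$. In the strict case we note that the weights sum to one, so at least one weight is positive.

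The main point to verify is the identity $\operatorname{Sec}(u\wedge v)=\sum_{i<j}K_{ij}p_{ij}^2$. It holds because the Gauss curvature tensor is exactly the curvature operator $A\wedge A-\kappa^2\,\Id\wedge\Id$ acting on $\Lambda^2$, and this operator is diagonalized by $\{e_i\wedge e_j\}$. Checking this reduces to a short expansion, $\langle Au,u\rangle\langle Av,v\rangle-\langle Au,v\rangle^2=\sum_{i<j}\lambda_i\lambda_j(u_iv_j-u_jv_i)^2$, which is the Cauchy--Binet identity. With this in hand the lemma follows, and no use of $0\leq\tau\leq2$ is needed.
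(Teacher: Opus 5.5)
Your proof is correct and is essentially the paper's argument. The paper also expands in a principal frame with $\omega_{ij}=u_iv_j-u_jv_i$ and uses the diagonality of the curvature operator on $e_i\wedge e_j$ together with $\sum_{j\ne i}\omega_{ij}^2=u_i^2+v_i^2$. This gives $\operatorname{BiRic}_\tau(u,v)=\sum_{i<j}P^{(\tau)}_{ij}\omega_{ij}^2$ with weights summing to one, and necessity follows by taking $u=e_i$, $v=e_j$.
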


\begin{proof}
Let $u=\sum_i u_i e_i$ and $v=\sum_i v_i e_i$ be orthonormal, and set
$\omega_{ij}=u_i v_j-u_j v_i$. Since the curvature operator of a hypersurface in a space form is diagonal in the basis $e_i\wedge e_j$,
\begin{equation*}
 \operatorname{Sec}(u\wedge v)
 =\sum_{i<j}K_{ij}\omega_{ij}^2.
\end{equation*}
Moreover,
\begin{equation*}
 \sum_{j\ne i}\omega_{ij}^2=u_i^2+v_i^2.
\end{equation*}
Consequently,
\begin{align*}
 \operatorname{BiRic}_{\tau}(u,v)
 &=\sum_i\rho_i(u_i^2+v_i^2)
   -\tau\sum_{i<j}K_{ij}\omega_{ij}^2\\
 &=\sum_{i<j}
   \bigl(\rho_i+\rho_j-\tau K_{ij}\bigr)\omega_{ij}^2
 =\sum_{i<j}P_{ij}^{(\tau)}\omega_{ij}^2.
\end{align*}
Because $\sum_{i<j}\omega_{ij}^2=|u\wedge v|^2=1$, positivity of all
principal coefficients is sufficient.  Necessity follows by taking
$u=e_i$ and $v=e_j$.
\end{proof}

The principal expression is
\begin{equation}\label{eq:Ptau-expanded}
 P_{ij}^{(\tau)}
 =H(\lambda_i+\lambda_j)-\lambda_i^2-\lambda_j^2
  -\tau\lambda_i\lambda_j-c_{n,\tau}\kappa^2,
\end{equation}
where $c_{n,\tau}:=2(n-1)-\tau$.

\begin{definition}
After choosing the normal so that $H\geq n\kappa$, a hypersurface satisfies the
\emph{quantitative $\tau$-bi-Ricci pinching condition} with constant
$\alpha_0>0$ if
\begin{equation}\label{eq:uniform-taubiric}
 P_{ij}^{(\tau)}
 \geq\alpha_0 H|\lambda_p-\lambda_q|
 \qquad(i\ne j,\ p,q\text{ arbitrary}).
\end{equation}
\end{definition}

\begin{lemma}
\label{lem:taubiric-mean-convex}
If the induced metric has nonnegative $\tau$-bi-Ricci curvature, then its
scalar curvature is nonnegative and the norm of its mean-curvature vector is
at least $n\kappa$.  If $\kappa>0$, the mean-curvature vector is nowhere zero.
Thus the normal can be chosen so
that $H\geq n\kappa$.  Under strict positivity, the same conclusion holds for
every $\kappa\geq0$, with $H>n\kappa$.
\end{lemma}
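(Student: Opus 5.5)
Summing the principal coefficients turns the pairwise condition into a scalar-curvature statement, and the scalar Gauss equation then bounds $H$ from below.

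\emph{Step 1: sum over pairs.} By the preceding Lemma, nonnegative $\tau$-bi-Ricci curvature gives $P_{ij}^{(\tau)}\geq0$ for all $i\ne j$. Summing over ordered pairs $i\ne j$ gives
\begin{equation*}
 \sum_{i\ne j}(\rho_i+\rho_j)=2(n-1)\sum_i\rho_i=2(n-1)S,
 \qquad
 \sum_{i\ne j}K_{ij}=S,
\end{equation*}
where $S=\sum_i\rho_i$ is the scalar curvature. Hence
\begin{equation*}
 0\leq\sum_{i\ne j}P_{ij}^{(\tau)}=\bigl(2(n-1)-\tau\bigr)S=c_{n,\tau}S .
\end{equation*}
Since $\tau\leq2<2(n-1)$ for $n\geq3$, we have $c_{n,\tau}>0$, so $S\geq0$. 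Under strict positivity the sum is strictly positive, so $S>0$.

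\emph{Step 2: scalar Gauss equation.} From \eqref{s2.rhoi}, or directly from \eqref{eq:gauss-space-form},
\begin{equation*}
 S=H^2-|A|^2-n(n-1)\kappa^2 .
\end{equation*}
By Cauchy--Schwarz, $|A|^2\geq H^2/n$. Therefore
\begin{equation*}
 \frac{n-1}{n}H^2\geq H^2-|A|^2=S+n(n-1)\kappa^2\geq n(n-1)\kappa^2,
\end{equation*}
which gives $H^2\geq n^2\kappa^2$. Since $H^2$ does not depend on the choice of normal, $|H|$ is the norm of the mean-curvature vector, and we get $|H|\geq n\kappa$. Under strict positivity the middle inequality is strict, so $|H|>n\kappa$ for every $\kappa\geq0$; in particular $|H|>0$ even when $\kappa=0$.

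\emph{Step 3: global orientation.} When $\kappa>0$ (nonnegative case), or in the strict case for any $\kappa\geq0$, the mean-curvature vector $\vec H=-H\nu$ is nowhere zero. The locally defined unit normal $-\vec H/|\vec H|$ is then a global smooth normal field. This makes the immersion two-sided, and with this choice $H=|\vec H|\geq n\kappa$, respectively $>n\kappa$. The sign depends on the convention $h_{ij}=\bar g(\bar\nabla_i\nu,\partial_j)$, but whichever sign appears, choosing $\pm\vec H/|\vec H|$ appropriately makes $H$ positive.

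\emph{Remaining caveat.} The only delicate point is that the statement of $P_{ij}$ presupposes a local normal. This is harmless because $P_{ij}^{(\tau)}$ and $S$ are intrinsic: they are invariant under $\nu\mapsto-\nu$, which sends $\lambda_i\mapsto-\lambda_i$, and the formulas are quadratic in the $\lambda_i$. So Steps 1--2 hold pointwise for either local choice. For $\kappa=0$ in the merely nonnegative case, $\vec H$ may vanish and the lemma correctly makes no orientation claim there.
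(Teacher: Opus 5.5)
Your proof is correct and matches the paper's argument: you sum $P_{ij}^{(\tau)}$ to get a positive multiple of the scalar curvature, apply the scalar Gauss equation with $|A|^2\geq H^2/n$, and orient by the nowhere-vanishing mean-curvature vector. The only difference is cosmetic: you sum over ordered pairs and get $c_{n,\tau}\,\mathrm{Scal}$, while the paper sums over $i<j$ and gets $(n-1-\tau/2)\,\mathrm{Scal}$.
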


\begin{proof}
Since the scalar curvature $\mathrm{Scal}=\sum_i\rho_i=2\sum_{i<j}K_{ij}$,
\begin{equation*}
 \sum_{i<j}P_{ij}^{(\tau)}
 =\left(n-1-\frac\tau2\right)\mathrm{Scal}.
\end{equation*}
The coefficient is positive for $n\geq3$ and $0\leq\tau\leq2$.  Thus
nonnegative $\tau$-bi-Ricci curvature implies $\mathrm{Scal}\geq0$, and
strict positivity implies $\mathrm{Scal}>0$.  The Gauss equation
\begin{equation*}
 \mathrm{Scal}=H^2-|A|^2-n(n-1)\kappa^2
\end{equation*}
and $|A|^2\geq H^2/n$ give $H^2\geq n^2\kappa^2$, with strict inequality in
the positive case.  Here $H$ is computed using any local unit normal, while
$H^2$ is the squared norm of the globally defined mean-curvature vector.  If
$\kappa>0$, or if the curvature condition is strict, this vector is nowhere
zero and trivializes the normal line bundle.  Choosing the resulting global
normal so that $H$ is positive gives the stated inequalities.  In the
Euclidean nonnegative case, an orientation with $H\geq0$ must instead be
imposed as a separate hypothesis.
\end{proof}

\begin{theorem}[Preservation of quantitative $\tau$-bi-Ricci pinching]
\label{thm:taubiric-preservation}
Let $0\leq\kappa\leq1$ and $0\leq\tau\leq2$, and choose the normal so
that $H\geq n\kappa$.  Nonnegative $\tau$-bi-Ricci curvature is preserved by
smooth mean curvature flow.  More generally, for every fixed
$\alpha_0>0$, the quantitative pinching condition
\eqref{eq:uniform-taubiric} is preserved whenever it holds initially.
Every compact hypersurface with positive $\tau$-bi-Ricci curvature satisfies
\eqref{eq:uniform-taubiric} for some $\alpha_0>0$.
\end{theorem}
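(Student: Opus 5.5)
We will apply Hamilton's vector-bundle maximum principle, in the form of \cite{AH11}*{Section~7.4} and \cite{AC17}*{Proposition~4}, to the equation \eqref{eq:nabla-t-h}. The first step is to describe the quantitative condition as a set
\begin{equation*}
 K_{\alpha_0}=\bigl\{A\in\operatorname{Sym}(n): \tr A\geq n\kappa,\ P^{(\tau)}_{ij}(A)-\alpha_0(\tr A)|\lambda_p-\lambda_q|\geq0\ \ \forall i\ne j,\ p,q\bigr\}.
\end{equation*}
We will show that this set is closed, $O(n)$-invariant and convex.

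For convexity we rewrite the constraints without reference to eigenvalue ordering. By the reduction lemma, $\min_{i\ne j}P^{(\tau)}_{ij}=\min_{u\perp v}\operatorname{BiRic}_\tau(u,v)$. For fixed orthonormal $u,v$, $\operatorname{BiRic}_\tau(u,v)$ is a quadratic function of $A$ by the Gauss equation, namely $H(a_{uu}+a_{vv})-|Au|^2-|Av|^2-\tau(a_{uu}a_{vv}-a_{uv}^2)-c_{n,\tau}\kappa^2$. Its concavity in $A$ reduces to checking that, on $\operatorname{span}(u,v)$ together with the remaining directions, the quadratic form $-|Au|^2-|Av|^2+H(a_{uu}+a_{vv})-\tau\det(A|_{uv})$ is concave. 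In eigenvalue form this is concavity of $(\lambda_i,\lambda_j)\mapsto H(\lambda_i+\lambda_j)-\lambda_i^2-\lambda_j^2-\tau\lambda_i\lambda_j$ restricted to each relevant slice. Its Hessian in $(\lambda_i,\lambda_j)$, including the $H$-terms, is $\begin{pmatrix}0&2-\tau\\2-\tau&0\end{pmatrix}$, plus cross terms with the other $\lambda_k$. This is where $\tau\le 2$ enters.

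Since this quadratic form is not literally concave, we expect this step to be the main obstacle. Our first attempt will be the approach of \cite{AC17}: use that the gradient of $\varphi=P_{ij}-\alpha_0H|\lambda_p-\lambda_q|$ has the right sign along the boundary. Concretely, we would replace convexity by the weaker Andrews-type criterion, in which a set given by $\{\varphi\ge0\}$ with $\varphi$ a Lipschitz function of the eigenvalues is preserved provided two conditions hold. The first is the reaction inequality $Q_\kappa\varphi\ge0$ at boundary points. The second is the gradient-term condition, which needs concavity of $\varphi$ only on the level set (inverse-concavity-type). We would then check the term $\alpha_0H|\lambda_p-\lambda_q|$, which is convex and is subtracted, so it is harmless for concavity. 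The term $-\lambda_i^2-\lambda_j^2$ is concave. For the remaining $H(\lambda_i+\lambda_j)-\tau\lambda_i\lambda_j$ we write $H=\lambda_i+\lambda_j+\sum_{k\ne i,j}\lambda_k$ and collect the two-variable part $(2-\tau)\lambda_i\lambda_j$. Then $\lambda_i^2+\lambda_j^2-(2-\tau)\lambda_i\lambda_j\ge \frac\tau2(\lambda_i^2+\lambda_j^2)$ combined with $\lambda_i^2+\lambda_j^2$ gives a negative-semidefinite total in the $(\lambda_i,\lambda_j)$-block when $\tau\le2$. The cross terms $(\lambda_i+\lambda_j)\sum_k\lambda_k$ are handled on the constraint set using $H\ge n\kappa$ and two-convexity, which follows from $P\ge0$.

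The reaction term is computed directly from $Q_{\kappa,i}=\lambda_i(|A|^2+n\kappa^2)-2\kappa^2H$. Since $\varphi$ is homogeneous quadratic in $\lambda$ apart from the constant $-c_{n,\tau}\kappa^2$, the $|A|^2\lambda_i$ part gives $2(|A|^2+n\kappa^2)(\varphi+c_{n,\tau}\kappa^2)$. The $-2\kappa^2H$ part gives $-2\kappa^2H\sum_k\partial_k\varphi=-2\kappa^2H\bigl((n-\tau)(\lambda_i+\lambda_j)+2H\bigr)$ up to sign bookkeeping, while $\sum_k\partial_k|\lambda_p-\lambda_q|=0$. At a boundary point $\varphi=0$ we must show that $2(|A|^2+n\kappa^2)c_{n,\tau}\kappa^2$ dominates the remaining $\kappa^2$ terms. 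For this we use $|A|^2\ge H^2/n$, $H\ge n\kappa$, and $\lambda_i+\lambda_j\le 2H/n+\dots$ bounded by $H$. We expect this to close for $0\le\tau\le2$; it does trivially when $\kappa=0$.

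Preservation of nonnegative curvature is the case $\alpha_0=0$, obtained as a limit. Finally, existence of $\alpha_0$ for compact $M$ is a compactness argument. Lemma~\ref{lem:taubiric-mean-convex} gives $H>n\kappa\ge0$, and $\min_{i\ne j}P^{(\tau)}_{ij}$ is continuous and positive on compact $M$. Also $H|\lambda_p-\lambda_q|\le 2H|A|$ is bounded. Hence $\alpha_0=\min P/\max(2H|A|)$ works, taking $\alpha_0$ arbitrary at umbilic points.
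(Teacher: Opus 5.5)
Your framework matches the paper's: Hamilton's vector-bundle maximum principle for the set cut out by the quantitative inequalities, then compactness for the existence of $\alpha_0$. That last step is fine. However, both substantive verifications are missing, and the arguments you sketch for them would fail.

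\textbf{Convexity.} You correctly find that the $(\lambda_i,\lambda_j)$-block of the Hessian of $P^{(\tau)}_{ij}$ is $\begin{pmatrix}0&2-\tau\\2-\tau&0\end{pmatrix}$, but the proposed repair does not work.
\begin{itemize}
\item $P^{(\tau)}_{ij}$ is quadratic, so its Hessian is a constant matrix. It is indefinite for every $\tau$: for $k\neq i,j$ the $(\lambda_i,\lambda_k)$-block is $\begin{pmatrix}0&1\\1&0\end{pmatrix}$. No use of the constraint set, of $H\geq n\kappa$, or of two-convexity can change the sign of a constant Hessian.
\item Your block inequality reuses the squares $\lambda_i^2+\lambda_j^2$ that were already absorbed into $(2-\tau)\lambda_i\lambda_j$.
\item $H|\lambda_p-\lambda_q|$ is not convex either: along $\lambda=(1,s,0,\ldots,0)$ with $|s|<1$ and $(p,q)=(1,2)$ it equals $1-s^2$. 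So subtracting it is not harmless.
\end{itemize}
The missing idea is to divide by $H$. On $\{H>0\}$ set
\[
G^{(\tau)}_{ijpq}=\lambda_i+\lambda_j-\frac{\lambda_i^2+\lambda_j^2+\tau\lambda_i\lambda_j+c_{n,\tau}\kappa^2}{H}-\alpha_0(\lambda_p-\lambda_q),
\]
so that $HG^{(\tau)}_{ijpq}=P^{(\tau)}_{ij}-\alpha_0H(\lambda_p-\lambda_q)$. With $\bar\xi=\sum_k\xi_k$ and $a_i=\xi_i-\lambda_i\bar\xi/H$, one gets
\[
D^2G^{(\tau)}_{ijpq}[\xi,\xi]=-\tfrac2H\bigl(a_i^2+a_j^2+\tau a_ia_j\bigr)-\tfrac{2c_{n,\tau}\kappa^2}{H^3}\bar\xi^2\leq0,
\]
and the first bracket is nonnegative exactly when $\tau\leq2$. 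The resulting set of eigenvalue vectors is closed, convex and permutation invariant. Lewis's spectral convexity theorem then makes the corresponding matrix set convex and $O(n)$-invariant, and Hamilton's principle applies directly. No level-set concavity criterion for a nonsmooth function is needed. Your criterion does follow from this: if $\varphi=HG$ vanishes and $D\varphi\cdot\xi=0$, then $D^2G[\xi,\xi]=H^{-1}D^2\varphi[\xi,\xi]$. But you give no argument for it.

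\textbf{Reaction term.} Your bookkeeping is off: $\sum_k\partial_kP^{(\tau)}_{ij}=(n-2-\tau)(\lambda_i+\lambda_j)+2H$, and $\sum_k\partial_k(H|\lambda_p-\lambda_q|)=n|\lambda_p-\lambda_q|\neq0$. The decisive problem, though, is the closing step.
\begin{itemize}
\item At the boundary point $\lambda_1=\cdots=\lambda_n=\kappa>0$, which lies on every face $G^{(\tau)}_{ijpq}=0$, the reaction derivative vanishes exactly. So its sign cannot come from lossy bounds of the kind you list.
\item $H\geq n\kappa$ bounds $\kappa$ from the wrong side for making $c_{n,\tau}\kappa^2(|A|^2+n\kappa^2)$ dominate.
\end{itemize}
One must use the boundary relation exactly. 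What is needed are the identities
\[
Q_\kappa P^{(\tau)}_{ij}=2|A|^2P^{(\tau)}_{ij}+\kappa^2\bigl(\mathcal S_i+\mathcal S_j+2\tau E_{ij}\bigr),\qquad
Q_\kappa\bigl(H(\lambda_p-\lambda_q)\bigr)=2|A|^2H(\lambda_p-\lambda_q),
\]
where $\mathcal S_i=\sum_{r,s\neq i}(\lambda_r-\lambda_s)^2$ over ordered pairs and $E_{ij}=H(\lambda_i+\lambda_j)-|A|^2-n\lambda_i\lambda_j$. On an active face they give $Q_\kappa(HG^{(\tau)}_{ijpq})=\kappa^2(\mathcal S_i+\mathcal S_j+2\tau E_{ij})$. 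This is nonnegative because it equals $(1-\frac\tau2)(\mathcal S_i+\mathcal S_j)+\frac\tau2(\mathcal S_i+\mathcal S_j+4E_{ij})$, with
\[
\mathcal S_i+\mathcal S_j+4E_{ij}=2(n-2)(\lambda_i-\lambda_j)^2+4\bigl[(n-2)\textstyle\sum_{k\neq i,j}\lambda_k^2-\bigl(\sum_{k\neq i,j}\lambda_k\bigr)^2\bigr]\geq0
\]
by Cauchy--Schwarz. This is the second place where $0\leq\tau\leq2$ is used.

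\textbf{Two smaller points.} You must also check the face $H=n\kappa$, where $Q_\kappa H=H(|A|^2-n\kappa^2)\geq0$. And preservation of nonnegative $\tau$-bi-Ricci curvature is not a limit of the $\alpha_0>0$ statements, since nonnegative curvature need not satisfy any positive pinching initially. It follows instead by running the same argument with $\alpha_0=0$.
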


\begin{proof}
Fix $\alpha_0\geq0$.  We first construct a closed convex set whose defining
inequalities are exactly the quantitative pinching condition.  The division by
$H$ below is useful because it makes the defining functions concave.  On the
half-space $H=\sum_k\lambda_k>0$, define
\begin{equation*}
 G_{ijpq}^{(\tau)}(\lambda)
 =\lambda_i+\lambda_j
 -\frac{\lambda_i^2+\lambda_j^2+\tau\lambda_i\lambda_j
              +c_{n,\tau}\kappa^2}{H}
 -\alpha_0(\lambda_p-\lambda_q).
\end{equation*}
Then
\begin{equation}\label{s2.HG}
 HG_{ijpq}^{(\tau)}
 =P_{ij}^{(\tau)}-\alpha_0 H(\lambda_p-\lambda_q).
\end{equation}
Because the ordered pairs $(p,q)$ and $(q,p)$ are both included, the
inequalities $G_{ijpq}^{(\tau)}\geq0$ are exactly
\eqref{eq:uniform-taubiric}.  Let $\Omega_{\tau,\alpha_0}$ be the closure in $\mathbb{R}^n$ of the set 
\begin{equation*}
\left\{\lambda\in\mathbb R^n:H>0,\ H\geq n\kappa,\quad
 G_{ijpq}^{(\tau)}(\lambda)\geq0
 \text{ for all }i\ne j,p,q\right\}.
\end{equation*}
When $\kappa>0$, the closure is redundant. When $\kappa=0$, the inequality  $G_{ijpq}^{(\tau)}(\lambda)\geq0$ implies $P_{ij}^{(\tau)}\geq0$ and hence $H^2-|A|^2\geq0$. Therefore the only additional point in the closure with $H=0$  is the origin.

For $\xi\in\mathbb R^n$, write
$\bar\xi=\sum_k\xi_k$ and
\begin{equation*}
 a_i=\xi_i-\frac{\lambda_i}{H}\bar\xi,
 \qquad a_j=\xi_j-\frac{\lambda_j}{H}\bar\xi.
\end{equation*}
Direct differentiation yields
\begin{equation*}
 D^2G_{ijpq}^{(\tau)}[\xi,\xi]
 =-\frac2H\bigl(a_i^2+a_j^2+\tau a_i a_j\bigr)
   -\frac{2c_{n,\tau}\kappa^2}{H^3}\bar\xi^2\leq0.
\end{equation*}
The quadratic form $a_i^2+a_j^2+\tau a_i a_j$ has eigenvalues
$1\pm\tau/2$, and $c_{n,\tau}\kappa^2\geq0$.  Hence every
$G_{ijpq}^{(\tau)}$ is concave for
 $0\leq\tau\leq2$. The set inside the preceding closure is therefore convex, and so $\Omega_{\tau,\alpha_0}$ is a closed, convex,
 permutation-invariant subset of $\mathbb R^n$.  By the spectral convexity theorem of Lewis \cite{Lewis96}*{Corollary~2.4}, applied to the indicator function of $\Omega_{\tau,\alpha_0}$, the corresponding set
\begin{equation*}
 \mathcal K_{\tau,\alpha_0}
 =\{A\in\operatorname{Sym}^2(\mathbb R^n):
      \lambda(A)\in\Omega_{\tau,\alpha_0}\}
\end{equation*}
is closed, convex, and $O(n)$-invariant.

Hamilton's \cite{Ham86} vector-bundle maximum principle reduces preservation of this set to an inward-pointing condition for the reaction equation obtained from
\eqref{eq:nabla-t-h}.  Since
$\mathcal K_{\tau,\alpha_0}$ is closed, convex, and $O(n)$-invariant, it is
enough to show that $\mathcal Q_\kappa(A)$ lies in its tangent cone at every
boundary point.  The boundary has the faces defined by active inequalities
$G_{ijpq}^{(\tau)}=0$ and the face $H=n\kappa$.  We check these in turn.

We begin with an active $G_{ijpq}^{(\tau)}$-face.  To compute the derivative of
$P_{ij}^{(\tau)}$ along the reaction equation, set
\begin{align*}
 \mathcal S_i
 :=&\sum_{r,s\ne i}(\lambda_r-\lambda_s)^2,\\
 E_{ij}:=&H(\lambda_i+\lambda_j)-|A|^2-n\lambda_i\lambda_j,
\end{align*}
where the sum in $\mathcal S_i$ is over ordered pairs.  We give the
reaction calculation explicitly.  First,
\begin{equation}\label{s2.QkH}
 Q_\kappa H
 =\sum_{r=1}^n Q_{\kappa,r}
 =H\bigl(|A|^2-n\kappa^2\bigr).
\end{equation}
Using \eqref{s2.rhoi},  the product rule gives
\begin{align*}
 Q_\kappa\rho_i
 &= (H-2\lambda_i)Q_{\kappa,i}
    +\lambda_iQ_\kappa H\\
 &=2|A|^2\rho_i
   +\kappa^2\left(
      2(n-1)|A|^2-2n\lambda_i^2
      -2H^2+4H\lambda_i
   \right).
\end{align*}
The expression in parentheses is precisely $\mathcal S_i$.  Indeed,
because the sum in the definition of $\mathcal S_i$ is over ordered
pairs,
\begin{align*}
 \mathcal S_i
 &=2(n-1)\sum_{r\ne i}\lambda_r^2
   -2\left(\sum_{r\ne i}\lambda_r\right)^2\\
 &=2(n-1)|A|^2-2n\lambda_i^2
   -2H^2+4H\lambda_i.
\end{align*}
Consequently,
\begin{equation}\label{s2.Qrho}
 Q_\kappa\rho_i
 =2|A|^2\rho_i+\kappa^2\mathcal S_i.
\end{equation}
Compare also the corresponding calculation in \cite{AC17}.

For the sectional-curvature term \eqref{s2.Kij}, we have
\begin{align}\label{s2.QK}
 Q_\kappa(\lambda_i\lambda_j-\kappa^2)
 &=\lambda_jQ_{\kappa,i}
   +\lambda_iQ_{\kappa,j}\nonumber\\
 &=2(|A|^2+n\kappa^2)\lambda_i\lambda_j
   -2\kappa^2H(\lambda_i+\lambda_j).
\end{align}
Combining \eqref{s2.Qrho} and \eqref{s2.QK}, and using \eqref{eq:Ptau-definition} 
and the definition of $E_{ij}$, we therefore obtain
\begin{align}
 Q_\kappa P_{ij}^{(\tau)}
 &=2|A|^2P_{ij}^{(\tau)}
   +\kappa^2\bigl(
      \mathcal S_i+\mathcal S_j+2\tau E_{ij}
   \bigr).
 \label{s2.QP}
\end{align}
The last bracket in \eqref{s2.QP} is nonnegative.  In fact, the identity
\begin{align*}
 \mathcal S_i+\mathcal S_j+4E_{ij}
 ={}&2(n-2)(\lambda_i-\lambda_j)^2\\
 &+4\left[(n-2)\sum_{k\ne i,j}\lambda_k^2
           -\left(\sum_{k\ne i,j}\lambda_k\right)^2\right]\geq0
\end{align*}
follows from Cauchy's inequality.  Consequently, for $0\leq\tau\leq2$,
\begin{align}
 \mathcal S_i+\mathcal S_j+2\tau E_{ij}
 ={}&\left(1-\frac\tau2\right)(\mathcal S_i+\mathcal S_j)\notag\\
 &+\frac\tau2(\mathcal S_i+\mathcal S_j+4E_{ij})\geq0.
 \label{eq:tau-reaction-convex-combination}
\end{align}
The coefficients in this convex combination are nonnegative precisely for
$0\leq\tau\leq2$.  This is the sign needed for the reaction vector to point
into an active $G_{ijpq}^{(\tau)}$-face.

We also derive the reaction of the quantitative pinching term.  Since
the scalar term $-2\kappa^2H$ in $Q_{\kappa,i}$ is independent of $i$,
it cancels in the difference and gives
\begin{equation*}
 Q_\kappa(\lambda_p-\lambda_q)
 =(|A|^2+n\kappa^2)(\lambda_p-\lambda_q).
\end{equation*}
Together with \eqref{s2.QkH}, the product rule yields
\begin{align}
 Q_\kappa\bigl(H(\lambda_p-\lambda_q)\bigr)
 &=Q_\kappa H\,(\lambda_p-\lambda_q)
   +H\,Q_\kappa(\lambda_p-\lambda_q)\notag\\
 &=2|A|^2H(\lambda_p-\lambda_q).
 \label{s2.QH}
\end{align}
Therefore, using \eqref{s2.QP}, \eqref{s2.QH}, and \eqref{s2.HG},
\begin{align}\label{s2.QHG}
 Q_\kappa\left(H G_{ijpq}^{(\tau)}\right)=&Q_\kappa \left(P_{ij}^{(\tau)}-\alpha_0 H(\lambda_p-\lambda_q)\right)\nonumber\\
 =&2|A|^2\left(H G_{ijpq}^{(\tau)}\right)
   +\kappa^2\bigl(\mathcal S_i+\mathcal S_j+2\tau E_{ij}\bigr).
\end{align}
At an active supporting face $G_{ijpq}^{(\tau)}=0$ of
$\mathcal K_{\tau,\alpha_0}$, the product rule gives
\begin{equation*}
 Q_\kappa\left(H G_{ijpq}^{(\tau)}\right)
 =G_{ijpq}^{(\tau)}Q_\kappa H
   +H\,Q_\kappa G_{ijpq}^{(\tau)}
 =H\,Q_\kappa G_{ijpq}^{(\tau)}.
\end{equation*}
On the other hand, \eqref{s2.QHG} and
\eqref{eq:tau-reaction-convex-combination} give
\begin{equation*}
 Q_\kappa\left(H G_{ijpq}^{(\tau)}\right)
 =\kappa^2
   \bigl(\mathcal S_i+\mathcal S_j+2\tau E_{ij}\bigr)
 \geq0
\end{equation*}
on the same face.  Since $H>0$ there, we conclude that
\begin{equation*}
 Q_\kappa G_{ijpq}^{(\tau)}\geq0.
\end{equation*}
Thus the reaction field points into the supporting half-space determined by
every active constraint.

It remains to check the boundary arising from the mean-curvature
condition.  Suppose first that $\kappa>0$ and $H=n\kappa$.   
Using $|A|^2\geq H^2/n$, from \eqref{s2.QkH} we have
\begin{equation*}
 Q_\kappa(H-n\kappa)
 =H\bigl(|A|^2-n\kappa^2\bigr)
 \geq0.
\end{equation*}
Thus the reaction field points into the half-space $H\geq n\kappa$.
When $\kappa=0$, the preceding description of
$\Omega_{\tau,\alpha_0}$ shows that the only point added by taking the
closure with $H=0$ is the origin.  At the corresponding matrix $A=0$,
\begin{equation*}
 Q_0(A)=|A|^2A=0,
\end{equation*}
so the closure creates no additional boundary issue.

We also explain why the preceding calculation is unaffected by
multiple eigenvalues.  The reaction map has the form
\begin{equation*}
 Q_\kappa(A)
 =\bigl(|A|^2+n\kappa^2\bigr)A
  -2\kappa^2H\operatorname{Id}.
\end{equation*}
Hence $Q_\kappa(A)$ commutes with $A$ and restricts to a scalar
endomorphism on every eigenspace of $A$.  Equivalently,
\begin{equation*}
 \lambda_i=\lambda_j
 \quad\Longrightarrow\quad
 Q_{\kappa,i}=Q_{\kappa,j}.
\end{equation*}
Thus the induced reaction equation for the eigenvalues is well defined
independently of the choice of orthonormal basis inside a multiple
eigenspace.  Since $\Omega_{\tau,\alpha_0}$ is permutation invariant,
the active-constraint calculation above therefore applies without
ambiguity at repeated eigenvalues.

We have consequently shown that the reaction equation preserves
$\mathcal K_{\tau,\alpha_0}$.  Since this set is closed, convex, and
$O(n)$-invariant, it defines a parallel family of fiberwise convex sets
for the metric temporal connection.  Hamilton's vector-bundle maximum
principle \cite{Ham86}, applied to \eqref{eq:nabla-t-h}, therefore shows
that $\mathcal K_{\tau,\alpha_0}$ is preserved by the full parabolic
equation.  Compare the corresponding invariant-region arguments in
\cite{HS09}*{Proposition~2.6} and \cite{AC17}*{Proposition~4}.
Taking $\alpha_0=0$ gives preservation of nonnegative
$\tau$-bi-Ricci curvature, while every fixed $\alpha_0>0$ gives
preservation of the quantitative pinching condition
\eqref{eq:uniform-taubiric}.

Finally, suppose that the initial hypersurface is compact and has
positive $\tau$-bi-Ricci curvature.  By
Lemma~\ref{lem:taubiric-mean-convex}, choose the normal so that
$H>n\kappa$.  Compactness gives
\begin{equation*}
 \min_{M_0}\min_{i\ne j}P_{ij}^{(\tau)}>0.
\end{equation*}
If
\begin{equation*}
 \max_{M_0}H(\lambda_n-\lambda_1)>0,
\end{equation*}
choose $\alpha_0>0$ sufficiently small that
\begin{equation*}
 \alpha_0\max_{M_0}H(\lambda_n-\lambda_1)
 \leq
 \min_{M_0}\min_{i\ne j}P_{ij}^{(\tau)}.
\end{equation*}
Since $ |\lambda_p-\lambda_q|
 \leq\lambda_n-\lambda_1$,  
the quantitative pinching condition
\eqref{eq:uniform-taubiric} follows.  If the maximum above vanishes,
then all principal curvatures agree at every point and the right-hand
side of \eqref{eq:uniform-taubiric} is identically zero, so any
$\alpha_0>0$ may be chosen.  The preservation already proved then
applies for the whole smooth flow.
\end{proof}

\subsection{Surgery classes and elementary consequences}
The surgery class used below is modeled on the class
$\mathcal C(R,\alpha)$ introduced by Huisken--Sinestrari
\cite{HS09}*{Definition~2.5} and on the surgery class used by
Langford--Nguyen \cite{LN21}*{Section~3.4}.  We replace their curvature
hypotheses by the quantitative $\tau$-bi-Ricci pinching established above
and retain only the parameters needed in the present hyperbolic setting.

\begin{definition}[Surgery class]
Fix $0\leq\kappa\leq1$, $\tau\in\mathcal I_n$, $0<R\leq1$, and a triple
$\alpha=(\alpha_0,\alpha_1,\alpha_2)$ of positive constants.  We denote by
$\CC_{\tau,\kappa}(R,\alpha)$ the class of smooth closed two-sided
hypersurface immersions $X:M^n\longrightarrow\Hh^{n+1}_{\kappa}$ satisfying the following properties:
\begin{enumerate}[label=\textup{(\roman*)},leftmargin=*]
\item $M$ satisfies the quantitative $\tau$-bi-Ricci pinching
\eqref{eq:uniform-taubiric} with constant $\alpha_0$.
\item The mean curvature has the uniform positive margin
\begin{equation}\label{eq:tau-surgery-class-bounds}
 H-n\kappa\geq\alpha_2R^{-1}.
\end{equation}
\item The induced area satisfies $ |M|\leq\alpha_1R^n$. 
\end{enumerate}
\end{definition}

The initial curvature normalization $|A|^2\leq R^{-2}$ is kept separate from the definition of the surgery class.  This is analogous to the normalization used in the Euclidean surgery construction of Huisken--Sinestrari \cite{HS09}. It is needed here for the lower lifespan bound below, whereas a cap inserted at surgery is controlled at the surgery scale rather than at the initial scale $R$.

For a fixed initial hypersurface with $H>n\kappa$, choose $R$ so that
\begin{equation*}
 R^{-1}\geq\max\{1,\|A\|_{L^\infty(M_0)}\},
\end{equation*}
and then choose
\begin{equation*}
 0<\alpha_2\leq R\min_{M_0}(H-n\kappa).
\end{equation*}
Both the curvature normalization and
\eqref{eq:tau-surgery-class-bounds} then hold.

The class is preserved along every smooth time interval.  Condition
\textup{(i)} is preserved by
Theorem~\ref{thm:taubiric-preservation}.  At a spatial minimum of $H$,
\eqref{eq:H-evol} and $|A|^2\geq H^2/n$ give
\begin{equation*}
 (\partial_t-\Delta)H
 \geq \frac{H}{n}(H^2-n^2\kappa^2)\geq0,
\end{equation*}
because $H\geq n\kappa$.  Hence the lower bound in \textup{(ii)} cannot
decrease.  Finally, \eqref{s2.evl-dmu}  
shows that the area is non-increasing, and therefore preserves
\textup{(iii)}.

\begin{proposition}[Uniform two-convexity]
\label{prop:uniform-two-convexity}
There is $\beta=\beta(n,\alpha_0)>0$ such that every hypersurface in
$\CC_{\tau,\kappa}(R,\alpha)$ satisfies
\begin{equation}\label{eq:uniform-two-convexity}
 \lambda_1+\lambda_2\geq\beta H.
\end{equation}
One may take
\begin{equation}\label{eq:beta-tau}
 \beta=\frac{2\alpha_0}{2(n-2)+n\alpha_0}.
\end{equation}
Consequently,
\begin{equation*}
 -H\leq\lambda_1\leq\cdots\leq\lambda_n\leq H,
 \qquad \frac1nH^2\leq|A|^2\leq nH^2,
\end{equation*}
$\lambda_i\geq\beta H/2$ for $i\geq2$, and
\begin{equation}\label{eq:kato-hs}
 |H\nabla_i h_{kl}-(\nabla_iH)h_{kl}|^2
 \geq\frac{\beta^2}{8}H^2|\nabla H|^2.
\end{equation}
\end{proposition}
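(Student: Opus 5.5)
The plan is to read two-convexity directly off a single instance of \eqref{eq:uniform-taubiric}, using only the algebraic structure of $P_{ij}^{(\tau)}$, and then derive the listed consequences from it. First, a hypersurface in $\CC_{\tau,\kappa}(R,\alpha)$ satisfies $H\geq n\kappa+\alpha_2R^{-1}>0$ by \eqref{eq:tau-surgery-class-bounds}, so we may divide by $H$. Take $(i,j)=(1,2)$ and $(p,q)=(n,1)$ in \eqref{eq:uniform-taubiric}. Using \eqref{eq:Ptau-expanded} and dropping the term $-c_{n,\tau}\kappa^2\leq0$, we get
\begin{equation*}
 H(\lambda_1+\lambda_2)\geq \lambda_1^2+\lambda_2^2+\tau\lambda_1\lambda_2+\alpha_0H(\lambda_n-\lambda_1).
\end{equation*}
For $0\leq\tau\leq2$ the quadratic form satisfies $\lambda_1^2+\lambda_2^2+\tau\lambda_1\lambda_2\geq(1-\tfrac\tau2)(\lambda_1^2+\lambda_2^2)\geq0$. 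Hence, writing $s=\lambda_1+\lambda_2$, we obtain $s\geq\alpha_0(\lambda_n-\lambda_1)$, and in particular $s\geq0$.

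Next we convert the right-hand side into a multiple of $H-s$. Since $\lambda_1\leq\lambda_2$, we have $\lambda_1\leq s/2$, so $\lambda_n-\lambda_1\geq\lambda_n-s/2$. Also $H-s=\sum_{k\geq3}\lambda_k\leq(n-2)\lambda_n$. Combining these gives
\begin{equation*}
 s\Bigl(1+\frac{\alpha_0}{2}+\frac{\alpha_0}{n-2}\Bigr)\geq\frac{\alpha_0}{n-2}H,
\end{equation*}
which rearranges to $s\geq\frac{2\alpha_0}{2(n-2)+n\alpha_0}H$. This is exactly \eqref{eq:uniform-two-convexity} with $\beta$ given by \eqref{eq:beta-tau}.

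The elementary consequences then follow quickly. For $i\geq2$ we have $\lambda_i\geq\lambda_2\geq s/2\geq\beta H/2\geq0$. Therefore $\lambda_n=H-\lambda_1-\sum_{2\leq k<n}\lambda_k\leq H-s\leq H$. On the other side, $\lambda_1=s-\lambda_2\geq-\lambda_2\geq-H$. Thus $|\lambda_i|\leq H$ for every $i$, which gives $|A|^2\leq nH^2$, while $|A|^2\geq H^2/n$ is Cauchy--Schwarz.

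For the gradient inequality \eqref{eq:kato-hs}, set $T_{ikl}=H\nabla_ih_{kl}-\nabla_iH\,h_{kl}$. Contracting $i$ with $k$ and using Codazzi gives
\begin{equation*}
 g^{ik}T_{ikl}=H\nabla_lH-h_l{}^{i}\nabla_iH=\bigl((H\Id-A)\nabla H\bigr)_l.
\end{equation*}
The eigenvalues of $H\Id-A$ are $H-\lambda_i\geq H-\lambda_n\geq s\geq\beta H$. Hence $|g^{ik}T_{ik\cdot}|\geq\beta H|\nabla H|$, and the crude bound $|g^{ik}T_{ik\cdot}|^2\leq n|T|^2$ already yields $|T|^2\geq\frac{\beta^2}{n}H^2|\nabla H|^2$.

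The main obstacle is getting the dimension-free constant $\beta^2/8$ rather than $\beta^2/n$. For this I would refine the contraction in the style of Huisken--Sinestrari \cite{HS09}: decompose $T$ in an orthonormal frame with $e_1$ parallel to $\nabla H$. Then I would estimate only the components $T_{11l}$ and $T_{l1l}$, $T_{1ll}$ (the latter two related by Codazzi). Using $H-\lambda_l\geq\beta H$ on these few components should bound $|T|^2$ below with a constant independent of $n$, absorbing cross terms by Young's inequality. If that bookkeeping fails to reach $1/8$, I expect any $n$-independent multiple of $\beta^2$ to suffice for the later applications, and the statement could be adjusted accordingly.
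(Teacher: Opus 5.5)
Your derivation of \eqref{eq:uniform-two-convexity} with the constant \eqref{eq:beta-tau} is correct and is the paper's argument. Combining $s\geq\alpha_0(\lambda_n-\lambda_1)$ directly with $\lambda_n-\lambda_1\geq\frac{H-s}{n-2}-\frac s2$ even makes the paper's case distinction unnecessary. Your direct proofs of $|\lambda_i|\leq H$, $\frac1nH^2\leq|A|^2\leq nH^2$ and $\lambda_i\geq\beta H/2$ for $i\geq2$ are also fine; the paper simply cites \cite{HS09}*{Proposition~2.7} for these.

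The gap is \eqref{eq:kato-hs}. Tracing $T_{ikl}=H\nabla_ih_{kl}-\nabla_iH\,h_{kl}$ over $(i,k)$ unavoidably costs a factor $1/n$. You therefore only obtain $\frac{\beta^2}{n}H^2|\nabla H|^2$, which implies the stated inequality only for $n\leq 8$. The refinement you sketch is not carried out. Moreover, the frame you propose ($e_1\parallel\nabla H$) does not diagonalize $A$, so the bound $H-\lambda_l\geq\beta H$ cannot be applied componentwise there. Falling back on changing the constant does not prove the proposition as stated. This holds even though, as you observe, the later use in \eqref{eq:D-controls-gradient} would tolerate a constant depending on $n$.

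The missing idea is to work in a principal frame and compare two components of $T$ via Codazzi, instead of taking a trace. If $h_{kl}=\lambda_k\delta_{kl}$ and $i\neq k$, then $h_{ik}=0$ and $\nabla_kh_{ik}=\nabla_ih_{kk}$, so
\begin{equation*}
 T_{kik}=H\nabla_ih_{kk},
 \qquad
 T_{ikk}-T_{kik}=-\lambda_k\nabla_iH.
\end{equation*}
Over ordered pairs $i\neq k$ the components $T_{ikk}$ and $T_{kik}$ are pairwise distinct. Hence $x^2+y^2\geq\frac12(x-y)^2$ gives
\begin{equation*}
 |T|^2\geq\sum_{i\neq k}\bigl(T_{ikk}^2+T_{kik}^2\bigr)
 \geq\frac12\sum_i(\nabla_iH)^2\sum_{k\neq i}\lambda_k^2 .
\end{equation*}
For every $i$ there is at least one index $k\neq i$ with $k\geq2$, and there $\lambda_k\geq\beta H/2\geq0$. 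Hence $\sum_{k\neq i}\lambda_k^2\geq\beta^2H^2/4$, and so $|T|^2\geq\frac{\beta^2}{8}H^2|\nabla H|^2$ with no loss in $n$. Counting all $n-2$ such indices even gives the factor $(n-2)\beta^2/8$. This is exactly where the constant $\beta^2/8$ comes from.
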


\begin{proof}
The point requiring proof here is the quantitative two-convexity constant
deduced from the intrinsic $\tau$-bi-Ricci pinching.  Once this estimate is
known, all remaining conclusions are the standard consequences recorded in
\cite{HS09}*{Proposition~2.7}.

Since $0\leq\tau\leq2$,
\begin{equation*}
 \lambda_1^2+\lambda_2^2+\tau\lambda_1\lambda_2\geq0,
 \qquad c_{n,\tau}\kappa^2\geq0.
\end{equation*}
Hence \eqref{eq:Ptau-expanded} gives
\begin{equation*}
 H(\lambda_1+\lambda_2)\geq P_{12}^{(\tau)}
 \geq\alpha_0H(\lambda_n-\lambda_1),
\end{equation*}
and therefore
\begin{equation}\label{eq:s-controls-width-tau}
 \lambda_1+\lambda_2\geq\alpha_0(\lambda_n-\lambda_1).
\end{equation}
The ordering gives
\begin{equation*}
 H\leq \lambda_1+\lambda_2+(n-2)\lambda_n,
 \qquad \lambda_1\leq\frac {\lambda_1+\lambda_2}{2},
\end{equation*}
so
\begin{equation}\label{s2.lambda_n-1}
 \lambda_n-\lambda_1
 \geq\frac{2H-n(\lambda_1+\lambda_2)}{2(n-2)}.
\end{equation}
If the right-hand side of \eqref{s2.lambda_n-1} is nonpositive, then
$\lambda_1+\lambda_2\geq2H/n\geq\beta H$, since the value in
\eqref{eq:beta-tau} satisfies $\beta\leq2/n$.  Otherwise,
combining \eqref{s2.lambda_n-1} with \eqref{eq:s-controls-width-tau} yields
\begin{equation*}
 2(n-2)(\lambda_1+\lambda_2)\geq\alpha_0(2H-n(\lambda_1+\lambda_2)),
\end{equation*}
which gives \eqref{eq:uniform-two-convexity} and
\eqref{eq:beta-tau}.  Applying
\cite{HS09}*{Proposition~2.7(i)--(iii)} with two-convexity constant $\beta$
gives the principal-curvature bounds, the estimate for $|A|^2$, and
\eqref{eq:kato-hs}.  That argument is algebraic apart from the Codazzi
identity and therefore applies unchanged in a space form.
\end{proof}

\begin{lemma}\label{lem:time-upper}
Let $M_0\in\CC_{\tau,\kappa}(R,\alpha)$, assume that
$|A|^2\leq R^{-2}$ on $M_0$, and let $[0,T)$ be the maximal interval on
which the mean curvature flow starting from $M_0$ is smooth.  Then
\begin{equation}\label{eq:time-upper}
 2\tau_nR^2\leq T\leq\frac n2\alpha_2^{-2}R^2,
\end{equation}
where we denote $ \tau_n=\frac{1}{4n}\log(1+n)$. 
\end{lemma}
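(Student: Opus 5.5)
The two inequalities in \eqref{eq:time-upper} come from separate maximum-principle arguments. The upper bound uses a lower barrier for $H$. The lower bound uses an upper barrier for $|A|^2$, obtained from \eqref{eq:A2-evol}.

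\emph{Upper bound.} Set $u=H-n\kappa$, so $u\geq\alpha_2R^{-1}$ initially. From \eqref{eq:H-evol} and $|A|^2\geq H^2/n$,
\begin{equation*}
 (\partial_t-\Delta)u=(|A|^2-n\kappa^2)H\geq\frac1n(H^2-n^2\kappa^2)H\geq\frac1n u^3 .
\end{equation*}
The last step uses $H^2-n^2\kappa^2=(H-n\kappa)(H+n\kappa)\geq u^2$ and $H\geq u\geq0$. The comparison ODE is $\dot\phi=\phi^3/n$ with $\phi(0)=\alpha_2R^{-1}$. Its solution is $\phi(t)^{-2}=\alpha_2^{-2}R^2-2t/n$, which blows up at $t=\frac n2\alpha_2^{-2}R^2$. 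The maximum principle applied to $\min u$ gives $u\geq\phi$ on $[0,T)$. A closed hypersurface cannot have $H=\infty$ at a finite time inside $[0,T)$, so $T\leq\frac n2\alpha_2^{-2}R^2$.

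\emph{Lower bound.} I will bound $\max|A|^2$ from above. In \eqref{eq:A2-evol}, drop $-2|\nabla A|^2$ and $-2n\kappa^2|A|^2\leq0$, and bound $4n\kappa^2(|A|^2-H^2/n)\leq4n\kappa^2|A|^2\leq 4n|A|^2$ using $\kappa\leq1$. This gives
\begin{equation*}
 (\partial_t-\Delta)|A|^2\leq2|A|^4+4n|A|^2 .
\end{equation*}
The comparison ODE $\dot y=2y^2+4ny$ with $y(0)=R^{-2}$ is Bernoulli, and $z=1/y$ satisfies $\dot z=-2-4nz$. Hence $z(t)=(R^2+\frac1{2n})e^{-4nt}-\frac1{2n}$. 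Then $y$ stays finite as long as $e^{4nt}<1+2nR^2$.

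The main obstacle is turning this into the stated constant $2\tau_nR^2=\frac{1}{2n}\log(1+n)R^2$. That constant vanishes as $R\to0$, so the crude bound $4n\kappa^2|A|^2\leq4n|A|^2$ does not give it directly. The plan is to use the normalization $R\leq1$ (recall $R^{-1}\geq\max\{1,\|A\|\}$) and to rewrite the $\kappa^2$ term as $4n\kappa^2|A|^2\leq4n|A|^2\cdot R^{-2}R^2\leq4n|A|^4R^2$ whenever $|A|^2\geq R^{-2}$. Alternatively, I can scale time by $R^2$: set $s=t/R^2$ and $w=R^2|A|^2$, which gives $w'\leq2w^2+4nR^2\kappa^2 w\leq2w^2+4nw$. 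The $s$-ODE with $w(0)\leq1$ then has finite solutions for $e^{4ns}<1+2n$. So $s<\frac1{4n}\log(1+2n)$, and this is at least $\frac1{4n}\log(1+n)\cdot2$ only up to adjusting constants.

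I will check the exact constant by keeping the sharper form $4n\kappa^2(|A|^2-H^2/n)$ and cancelling part of it against $-2n\kappa^2|A|^2$. This yields net coefficient $2n$ and the ODE $w'=2w^2+2nw$. Its solution satisfies $1/w=(1+\frac1n)e^{-2ns}-\frac1n$, so $w$ stays bounded for $s<\frac1{2n}\log(1+n)=2\tau_n$.

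It remains to show that $T$ is at least the time at which the $|A|^2$ barrier is finite. This is the standard fact that smooth MCF in a space form continues as long as $|A|$ stays bounded, as in \cite{Hui86}. That gives $T\geq2\tau_nR^2$.
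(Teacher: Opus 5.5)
Your final argument is correct and matches the paper's proof. The upper bound uses the same lower barrier $y'\geq y^3/n$ for $\min H-n\kappa$. The lower bound combines the two $\kappa^2$ terms in \eqref{eq:A2-evol} into $2n\kappa^2|A|^2-4\kappa^2H^2\leq 2n\kappa^2|A|^2$, solves the resulting Bernoulli ODE, and then invokes the continuation criterion. The only difference is cosmetic: the paper stays in unscaled time, obtains the blow-up time $\frac1{2n}\log(1+nR^2)$, and uses concavity of $s\mapsto\log(1+ns)$ on $[0,1]$, whereas your rescaling $s=t/R^2$ with $\kappa R\leq1$ lands on $2\tau_n$ directly. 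Delete the earlier $4n$-coefficient attempt; it falls short because of the doubled linear coefficient, not because the target vanishes as $R\to0$.
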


\begin{proof}
Let $m(t)=\min_{M_t}H$ and set
$y(t)=m(t)-n\kappa$.  At a spatial minimum, \eqref{eq:H-evol} and
$|A|^2\geq H^2/n$ give, in the barrier sense,
\begin{equation*}
 m'\geq\frac1n m(m^2-n^2\kappa^2).
\end{equation*}
Since $m=y+n\kappa$, we have 
\begin{equation*}
 m\bigl(m^2-n^2\kappa^2\bigr)
 =(y+n\kappa)y(y+2n\kappa)
 \geq y^3.
\end{equation*}
Hence 
\begin{equation*}
 y'\geq\frac1n y^3,
 \qquad y(0)\geq\alpha_2R^{-1}.
\end{equation*}
The solution of $z'=z^3/n$ with
$z(0)=\alpha_2R^{-1}$ becomes unbounded at
$n\alpha_2^{-2}R^2/2$.  Scalar comparison gives $y(t)\geq z(t)$ as long as both are defined, and  so
\begin{equation*}
 T\leq\frac n2\alpha_2^{-2}R^2.
\end{equation*}

We next prove the lower bound.  From \eqref{eq:A2-evol},
\begin{align*}
 (\partial_t-\Delta)|A|^2
 &\leq
 2|A|^4+2n\kappa^2|A|^2-4\kappa^2H^2\\
 &\leq
 2|A|^4+2n|A|^2,
\end{align*}
where $0\leq\kappa\leq1$ was used in the last inequality.  Thus $u(t):=\max_{M_t}|A|^2$  
is bounded from above, in the barrier sense, by the solution of
\begin{equation*}
 v'=2v^2+2nv,
 \qquad
 v(0)=R^{-2}.
\end{equation*}
The blow-up time of this comparison solution is
\begin{equation*}
 t_*
 =
 \int_{R^{-2}}^\infty
 \frac{ds}{2s(s+n)}
 =
 \frac1{2n}\log(1+nR^2).
\end{equation*}
In particular, $|A|$ remains bounded on every compact subinterval of
$[0,t_*)$.  If $T<t_*$, the same comparison gives a uniform bound for
$|A|$ on $[0,T)$, and the standard continuation criterion for compact
mean curvature flow extends the solution past $T$.  This contradiction
shows that $ T\geq t_*$.  

Finally, since $0<R\leq1$ and the function
\begin{equation*}
 s\longmapsto\log(1+ns)
\end{equation*}
is concave on $[0,1]$ and vanishes at $s=0$, we have
\begin{equation*}
 \log(1+nR^2)
 \geq
 R^2\log(1+n).
\end{equation*}
Therefore
\begin{equation*}
 T \geq
 \frac{\log(1+n)}{2n}R^2
 =  2\tau_nR^2.
\end{equation*}
This proves \eqref{eq:time-upper}.
\end{proof}

\section{The cylindrical estimate}\label{sec:cylindrical}

In this section we establish the cylindrical estimate for the smooth mean
curvature flow.  The overall strategy follows 
Huisken--Sinestrari \cite{HS09}*{Section~5} and its space-form version due to
Langford--Nguyen \cite{LN21}*{Section~4.1}.  The new ingredient in the present
setting is an algebraic coercivity estimate for the full Simons commutator
which follows from the quantitative $\tau$-bi-Ricci pinching established in
Section~\ref{sec:preliminaries}.  This yields a Poincar\'e-type inequality on
the region where the hypersurface is quantitatively away from being
cylindrical.  The remaining argument proceeds through the standard
$L^p$ estimates and Stampacchia iteration.


\begin{theorem}[Cylindrical estimate]\label{thm:cylindrical}
Let $M_t$, $t\in[0,T)$, be a smooth solution of \eqref{eq:mcf} with
initial hypersurface
$M_0\in\CC_{\tau,\kappa}(R,\alpha)$, and assume in addition that
$|A|^2\leq R^{-2}$ on $M_0$.  Then, for every $\eta>0$, there exists
$C_\eta=C_\eta(n,\alpha,\eta)>0$, independent of $\kappa$, $R$, and $T$,
such that
\begin{equation}\label{eq:cylindrical}
 |A|^2-\frac{1}{n-1}H^2
 \leq \eta H^2+C_\eta R^{-2}
\end{equation}
on $M_t$ for every $t\in[0,T)$.
\end{theorem}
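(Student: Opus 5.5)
We follow the Huisken--Sinestrari scheme \cite{HS09}*{Section~5} in the space-form form of \cite{LN21}*{Section~4.1}. Fix $\eta>0$ and set
\begin{equation*}
 f_\sigma=\frac{|A|^2-\frac1{n-1}H^2-\eta H^2}{H^{2-\sigma}}\,,
\end{equation*}
or, to keep the ambient terms harmless, the same expression with $H$ replaced by $H-n\kappa+\alpha_2R^{-1}$ where useful. The goal is to show that $f_{\sigma,+}$ is bounded by $C_\eta R^{-\sigma}$ for some small $\sigma>0$. Absorbing $H^\sigma$ with Young's inequality then gives \eqref{eq:cylindrical}.

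The first step is the evolution inequality. From \eqref{eq:H2-evol}, \eqref{eq:A2-evol} and the Kato-type bound \eqref{eq:kato-hs} of Proposition~\ref{prop:uniform-two-convexity}, we expect
\begin{equation*}
 (\partial_t-\Delta)f_\sigma\leq\frac{2(1-\sigma)}{H}\langle\nabla H,\nabla f_\sigma\rangle-\varepsilon\frac{|\nabla H|^2}{H^{2-\sigma}}+2\sigma|A|^2f_\sigma+C\kappa^2 H^{\sigma}.
\end{equation*}
Here $\varepsilon=\varepsilon(n,\beta)$. The $\kappa^2$ terms come from $n\kappa^2$ in \eqref{eq:H2-evol} and from $4n\kappa^2(|A|^2-H^2/n)$ in \eqref{eq:A2-evol}. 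Because $\kappa\le1\le R^{-1}$, these are bounded by $CR^{-2}H^{\sigma}$ with constants independent of $\kappa$.

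The second step is the Poincar\'e-type inequality on the acylindrical region $\{f_\sigma>0\}$, and this is the main obstacle. We use the Simons identity in a space form,
\begin{equation*}
 \tfrac12\Delta|A|^2=\langle A,\nabla^2H\rangle+|\nabla A|^2+\mathfrak Z+\kappa^2(\cdots).
\end{equation*}
Here $\mathfrak Z=H\tr A^3-|A|^4=\tfrac12\sum_{i,j}\lambda_i\lambda_j(\lambda_i-\lambda_j)^2$, equivalently $|\mathfrak S|^2$ with $\mathfrak S$ the commutator term. We need $\mathfrak Z\geq\gamma H^4$ wherever $|A|^2-\frac1{n-1}H^2\ge\eta H^2$.

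By homogeneity, this reduces to a compactness argument on the set of normalized $\lambda$ with $H=1$. These $\lambda$ satisfy the scale-invariant limit of \eqref{eq:uniform-taubiric}, namely $P^{(\tau)}_{ij}+c_{n,\tau}\kappa^2\ge\alpha_0H|\lambda_p-\lambda_q|$, which is valid after dropping the $\kappa$ term. On that set we must show $\mathfrak Z$ has no zeros. A zero of $\mathfrak Z$ forces all $\lambda_i\lambda_j(\lambda_i-\lambda_j)^2=0$. By two-convexity, only $\lambda_2,\dots,\lambda_n>0$ can occur, so any zero is either umbilic, which is excluded by the $\eta$ region, or of the form $(0,\mu,\dots,\mu)$, which is the cylinder with $|A|^2=H^2/(n-1)$ and is also excluded. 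For $\lambda_1<0$ the pinching gives strict positivity directly. Checking that no other degenerate configuration survives the quantitative $\tau$-bi-Ricci constraint is where I expect care to be needed, especially near $\tau=2$.

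Integrating the Simons identity against $f_{\sigma,+}^pH^{-2}$ then yields
\begin{equation*}
 \gamma\int f_{\sigma,+}^pH^2\leq C(p+1)\int f_{\sigma,+}^{p-1}\frac{|\nabla H|^2}{H^{2-\sigma}}+C\int f_{\sigma,+}^{p-1}\frac{|\nabla A|^2}{H^{2-\sigma}}+C R^{-2}\int f_{\sigma,+}^p.
\end{equation*}

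The third step is the $L^p$ bound. Combining the evolution inequality with the Poincar\'e inequality, and choosing $p$ large and $\sigma\le c/\sqrt p$, gives
\begin{equation*}
 \frac{d}{dt}\int f_{\sigma,+}^p\,d\mu\leq C R^{-2}\int f_{\sigma,+}^p+\ldots
\end{equation*}
Since $T\le\frac n2\alpha_2^{-2}R^2$ by Lemma~\ref{lem:time-upper} and $|M_t|\le\alpha_1R^n$, this bounds $\|f_{\sigma,+}\|_{L^p}$ uniformly in $R$ after scaling.

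The final step is a Stampacchia iteration using the Michael--Simon Sobolev inequality. The $\kappa$-dependent mean curvature term in that inequality is controlled since $\kappa\le1\le R^{-1}$. We run the iteration on the sets $\{f_\sigma>k\}$ with $k\ge k_0R^{-\sigma}$, exactly as in \cite{HS09}, and obtain $f_\sigma\le CR^{-\sigma}$. Finally, $\eta H^{2}$ absorbs $CR^{-\sigma}H^{2-\sigma}$ up to $C_\eta R^{-2}$, which gives \eqref{eq:cylindrical}.
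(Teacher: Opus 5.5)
Your second step has a genuine gap. The contracted Simons term $Z=H\tr A^3-|A|^4=\frac12\sum_{i,j}\lambda_i\lambda_j(\lambda_i-\lambda_j)^2$ is not coercive on the acylindrical region under the quantitative $\tau$-bi-Ricci pinching. It is also not ``equivalently $|\mathfrak S|^2$''; these are different polynomials.

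Your zero analysis (a zero of $Z$ forces every $\lambda_i\lambda_j(\lambda_i-\lambda_j)^2$ to vanish) is valid only when $\lambda_1\geq0$. The claim that for $\lambda_1<0$ ``the pinching gives strict positivity directly'' is false, because the terms $\lambda_1\lambda_j(\lambda_1-\lambda_j)^2$, $j\geq2$, are then negative. Concretely, take $\kappa=0$ (or $\kappa/H$ small) and $\lambda=(-\epsilon,1,\dots,1)$. Then:
\begin{itemize}
\item $P^{(\tau)}_{1j}=n-2-(n-\tau)\epsilon$, $P^{(\tau)}_{ij}=2(n-2-\epsilon)-\tau$ for $i,j\geq2$, and $H|\lambda_p-\lambda_q|\leq(n-1)(1+\epsilon)$. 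So for $\tau\in\mathcal I_n$, small $\epsilon$ and small $\alpha_0$, \eqref{eq:uniform-taubiric} holds. Small $\alpha_0$ is the relevant case, since $\alpha_0$ comes from the initial data.
\item $|A|^2-\frac1{n-1}H^2=2\epsilon+\frac{n-2}{n-1}\epsilon^2$, so the point lies in the $\eta$-region for every $\eta\leq2\epsilon/(n-1)^2$.
\item Yet $Z=-(n-1)\epsilon(1+\epsilon)^2<0$.
\end{itemize}
Since $Z/H^4$ is a fixed negative number at this configuration, the negative part of $Z$ has the same order $H^4$ as the coercive term you need. It cannot be treated as a lower-order error or absorbed by Gronwall. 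The pinching allows nonconvex points, and that is exactly why $Z$ fails.

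The missing idea is to use the full four-index Simons identity \eqref{eq:full-simons}. Its commutator satisfies $|\mathfrak S|^2=\sum_{i,j}(\lambda_i-\lambda_j)^2(\lambda_i\lambda_j-\kappa^2)^2$, a sum of squares, so a zero really does force each term to vanish. In the normalized variables $\mu=\lambda/H$, $s=\kappa/H$, a zero has at most two values $a>b\geq0$ with $ab=s^2$.
\begin{itemize}
\item Umbilic points and $b$ of multiplicity one are excluded by the $\eta$-region.
\item $b$ of multiplicity $q\geq2$ is excluded because the pinching on a $b$-pair reads $(2q-2-\tau)b(b-a)\geq\alpha_0(a-b)>0$, a contradiction.
\end{itemize}
That last identity uses $s^2=ab$ together with the term $-c_{n,\tau}s^2$. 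You must therefore keep the $\kappa$-contribution in the pinching rather than drop it as you propose.

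Compactness then gives $|\mathfrak S|^2\geq\gamma H^6$. Integrating $|\mathfrak S|^2=2\mathfrak S_{ijkl}\nabla_i\nabla_jh_{kl}$ against $u^2H^{-4}$ yields \eqref{eq:tau-poincare}. Because the ambient curvature is built into $\mathfrak S$, no separate $\kappa^2$ error term appears.

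Alternatively, you could keep $Z$ if you had an independent convexity estimate $\lambda_1\geq-\varepsilon H-C_\varepsilon R^{-1}$. Continuity from the $\lambda_1\geq0$ case would then give $Z\geq\gamma H^4$ wherever $\lambda_1\geq-\varepsilon H$, for $\varepsilon$ small depending on $\eta$. But in this paper the convexity estimate, Corollary~\ref{cor:convexity}, is deduced from the cylindrical estimate, so that route needs a separate proof.

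Once the Poincar\'e inequality is repaired, the rest of your outline agrees with the paper: the $L^p$ bound with $\sigma\lesssim p^{-1/2}$, the Michael--Simon inequality, Stampacchia iteration, and the final Young absorption.
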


\subsection{Uniform space-form normalization}\label{subsec:space-form-normalization}

We first reduce the proof to a fixed curvature scale.  Let $X:M\times[0,T)\longrightarrow\Hh^{n+1}_{\kappa}$  
be the given mean curvature flow.  Define the rescaled ambient metric and
time variable by
\begin{equation*}
 \widehat{\bar g}:=R^{-2}\bar g,
 \qquad
 \widehat t:=R^{-2}t,
\end{equation*}
and set $ \widehat X(x,\widehat t):=X(x,R^2\widehat t)$.  
The Levi-Civita connection is unchanged under this constant rescaling, while
the unit normal and the curvature quantities transform according to
\begin{equation*}
 \widehat\nu=R\nu,\qquad
 \widehat\lambda_i=R\lambda_i,\qquad
 \widehat H=RH,\qquad
 |\widehat A|=R|A|.
\end{equation*}
Consequently,
\begin{equation*}
 \frac{\partial\widehat X}{\partial\widehat t}
 =R^2\frac{\partial X}{\partial t}
 =-R^2H\nu
 =-\widehat H\,\widehat\nu,
\end{equation*}
so $\widehat X$ is again a mean curvature flow in a space form of curvature $-\widehat\kappa^2$, where $ \widehat\kappa=R\kappa$.   Its maximal smooth time is $\widehat T=R^{-2}T$.  

Moreover,
\begin{equation*}
 |\widehat A_0|^2=R^2|A_0|^2\leq1,
 \qquad
 |\widehat M_0|=R^{-n}|M_0|\leq\alpha_1,
\end{equation*}
and $\widehat H-n\widehat\kappa
 =R(H-n\kappa)\geq\alpha_2$.  
The quantitative $\tau$-bi-Ricci condition is invariant under this
simultaneous rescaling of the principal curvatures and the ambient curvature
parameter.  Since $0<R\leq1$ and $0\leq\kappa\leq1$, we also have
$0\leq\widehat\kappa\leq1$.

It therefore suffices to prove all estimates under the normalized assumptions
\begin{equation}\label{eq:normalized-space-form-setting}
 R=1,\qquad
 0\leq\kappa\leq1,\qquad
 H\geq n\kappa+\alpha_2,\qquad
 |A_0|^2\leq1,\qquad
 |M_0|\leq\alpha_1.
\end{equation}
In this normalization Lemma~\ref{lem:time-upper} also gives the uniform
upper bound
\begin{equation}\label{eq:normalized-time-upper}
 T\leq\frac n2\alpha_2^{-2}.
\end{equation}
In the remainder of the proof we work under
\eqref{eq:normalized-space-form-setting}, omit the hats, and restore the
original scale only at the end.

\subsection{The Simons commutator and a Poincar\'e inequality}
The key ingredient in the integral argument is a Poincar\'e-type inequality
on the region where the curvature is quantitatively away from the cylindrical
configuration.  We follow the general strategy of
 Langford--Nguyen \cite{LN21}*{Proposition~2.2} (see also \cite{LN25}*{Proposition~3.5}), which is based on the
 Simons identity and an algebraic lower bound for its commutator.
 The algebraic step is different in the present setting.  Instead of the
 quadratic curvature pinching used in \cite{LN21}, the required coercivity
 will follow from the quantitative $\tau$-bi-Ricci pinching
 \eqref{eq:uniform-taubiric}.

For a hypersurface in a space form, the four-index Simons identity takes the form
\begin{equation}\label{eq:full-simons}
 \nabla_{(i}\nabla_{j)}h_{kl}
 -\nabla_{(k}\nabla_{l)}h_{ij}
 =\mathfrak S_{ijkl}.
\end{equation}
Here parentheses denote normalized symmetrization. For example,
\begin{equation*}
 \nabla_{(i}\nabla_{j)}h_{kl}
 =\frac12\left(
 \nabla_i\nabla_jh_{kl}+\nabla_j\nabla_ih_{kl}
 \right).
\end{equation*} 
In our normalization, where the ambient sectional
curvature is $-\kappa^2$, the algebraic commutator is
\begin{equation}\label{eq:simons-commutator}
 \mathfrak S_{ijkl}
 :=h_{ij}(h^2)_{kl}-h_{kl}(h^2)_{ij}
   -\kappa^2(g_{ij}h_{kl}-g_{kl}h_{ij}),
\end{equation}
 which is symmetric in each index pair and satisfies
\begin{equation*}
 \mathfrak S_{ijkl}=-\mathfrak S_{klij}.
\end{equation*}
In an orthonormal principal frame, its only possibly nonzero components are
\begin{equation}\label{eq:simons-commutator-principal}
 \mathfrak S_{iijj}
 =(\lambda_j-\lambda_i)(\lambda_i\lambda_j-\kappa^2),
\end{equation}
and consequently
\begin{equation}\label{eq:simons-commutator-norm}
 |\mathfrak S|^2
 =\sum_{i,j=1}^n
 (\lambda_i-\lambda_j)^2(\lambda_i\lambda_j-\kappa^2)^2.
\end{equation}

\begin{lemma}[Coercivity of the Simons commutator]
\label{lem:simons-commutator-coercivity}
For every $\eta>0$ there is
$\gamma=\gamma(n,\alpha_0,\eta)>0$ with the following property.  Uniformly for
$0\leq\tau\leq2$ and $0\leq\kappa\leq1$, every principal-curvature vector satisfying
$H>0$, $H\geq n\kappa$ and the quantitative $\tau$-bi-Ricci pinching condition \eqref{eq:uniform-taubiric} also satisfies
\begin{equation*}
 |\mathfrak S|^2\geq\gamma H^6
\end{equation*}
whenever
\begin{equation}\label{eq:acylindrical-region}
 |A|^2-\frac1{n-1}H^2\geq\eta H^2.
\end{equation}
\end{lemma}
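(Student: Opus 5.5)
The plan is a compactness-and-contradiction argument on the normalized set of principal curvature data. All quantities are homogeneous once $\kappa$ is scaled with $\lambda$. Under $\lambda\mapsto s\lambda$, $\kappa\mapsto s\kappa$ we have $|\mathfrak S|^2\mapsto s^6|\mathfrak S|^2$ and $H^6\mapsto s^6H^6$. The conditions $H\geq n\kappa$, \eqref{eq:uniform-taubiric} and \eqref{eq:acylindrical-region} are all scale invariant. So we normalize $H=1$ and consider the set $\mathcal X$ of triples $(\lambda,\kappa,\tau)$ with $\sum\lambda_i=1$, $0\leq\kappa\leq1/n$ and $\tau\in[0,2]$. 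On $\mathcal X$ we impose $P^{(\tau)}_{ij}\geq\alpha_0|\lambda_p-\lambda_q|$ for all $i\ne j$ and all $p,q$, and $|\lambda|^2-\frac1{n-1}\geq\eta$.

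The first step is compactness of $\mathcal X$. Proposition~\ref{prop:uniform-two-convexity} is purely algebraic, so it applies to any such vector and gives $-1\leq\lambda_i\leq1$. Hence $\mathcal X$ is a closed and bounded subset of $\mathbb R^n\times\mathbb R\times\mathbb R$. The function $|\mathfrak S|^2$ from \eqref{eq:simons-commutator-norm} is continuous on it. It therefore suffices to show that $|\mathfrak S|^2>0$ everywhere on $\mathcal X$, and then take $\gamma$ to be its minimum. This $\gamma$ depends only on $n,\alpha_0,\eta$ and is uniform in $\tau$ and $\kappa$.

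The main step is ruling out zeros. Suppose $|\mathfrak S|^2=0$. Then for every pair $i,j$, either $\lambda_i=\lambda_j$ or $\lambda_i\lambda_j=\kappa^2$. Group the distinct values of $\lambda$.

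\emph{Case 1: all $\lambda_i$ are equal.} Then $|\lambda|^2=1/n<1/(n-1)$, which contradicts \eqref{eq:acylindrical-region}.

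\emph{Case 2: at least two distinct values.} Any two distinct values $a\ne b$ satisfy $ab=\kappa^2$. Three distinct values $a,b,c$ would give $ab=ac=bc=\kappa^2$. If $\kappa>0$ this forces $a=b=c$. If $\kappa=0$ it forces two of them to vanish, so they would not be distinct. Hence there are exactly two values, $a<b$, with multiplicities $m$ and $n-m$, and $ab=\kappa^2\geq0$.

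Two-convexity $\lambda_1+\lambda_2\geq\beta>0$ excludes $a\le0$ when $m\geq2$. It also excludes $a=0$ with $\kappa=0$ when $m=1$, since then $b>0$ and $ab=0$ would be forced, but $a=0$ together with $\lambda_1+\lambda_2=b\ge\beta$ is fine. So that subcase needs the bi-Ricci condition.

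Now use the quantitative pinching with the active pair. If $m\geq2$, take $i,j$ both carrying the value $a$:
\[
P^{(\tau)}_{ij}=2a-(2+\tau)a^2-c_{n,\tau}\kappa^2 .
\]
If $m=1$, take $i$ with value $a$ and $j$ with value $b$:
\[
P^{(\tau)}_{ij}=(a+b)-a^2-b^2-\tau\kappa^2-c_{n,\tau}\kappa^2 .
\]
The bound to exploit is $P\geq\alpha_0(b-a)>0$. We substitute $\kappa^2=ab$ and the constraint $ma+(n-m)b=1$, and aim to show $P\le0$. This gives the contradiction unless the configuration is the cylinder $(0,\frac1{n-1},\dots,\frac1{n-1})$ with $\kappa=0$. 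That cylinder is exactly excluded by \eqref{eq:acylindrical-region}, since there $|\lambda|^2=\frac1{n-1}$.

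For $m=1$ the expression is
\[
a+b-a^2-b^2-2(n-1)ab,
\]
because $\tau\kappa^2+c_{n,\tau}\kappa^2=2(n-1)ab$. With $a+(n-1)b=1$, using $a+b=(a+b)(a+(n-1)b)$, it becomes
\[
(n-2)b^2+(n-2)ab-(n-2)ab\cdot\tfrac{?}{}\ \cdots
\]
This needs care. Expanding explicitly, I expect it to reduce to a multiple of $b(\,\cdot\,)-a(\,\cdot\,)$ whose sign combined with $a\ge0$ forces $a=0$, hence $\kappa=0$. That is the cylinder, and \eqref{eq:acylindrical-region} rules it out.

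The main obstacle is this sign verification in the two-value case for general $m$. For $2\le m\le n-1$ one must show the pinching fails or \eqref{eq:acylindrical-region} fails. I would first check $|\lambda|^2\ge\frac1{n-1}+\eta$. Two-value vectors with $a,b>0$, $ab=\kappa^2\le 1/n^2$ and $m\ge2$ have $|\lambda|^2\geq\frac1{n-1}$ only when the spread is large. In that regime I expect $P_{ij}$ on the $a$-pair, or the pair $(a,b)$, to become negative, using $\kappa^2=ab$ and $c_{n,\tau}\geq 2(n-2)$. If this direct computation becomes messy, the fallback is to use only $P\ge\alpha_0(b-a)$ for the mixed pair. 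With $\kappa^2=ab$ this reads $(a+b)-a^2-b^2-2(n-1)ab\geq\alpha_0(b-a)$, and I would combine it with the normalization to derive $|\lambda|^2<\frac1{n-1}+\eta$ directly.
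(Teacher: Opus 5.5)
Your normalization, the compactness argument via $|\lambda_i|\le1$, the all-equal case, and the reduction to exactly two values $a<b$ with $ab=\kappa^2$ all match the paper. The gap is that the two-value case, which is the real content of the lemma, is never carried out, and the strategy you propose for $m=1$ cannot work.

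First, $a\ge0$ is automatic. Since $1/n$ is the weighted average of $a$ and $b$, we have $b>1/n>0$, and then $ab=\kappa^2\ge0$ gives $a\ge0$; two-convexity is not needed. Now take $m=1$, and write $P^{(\tau)}_{ab}$, $P^{(\tau)}_{bb}$ for $P^{(\tau)}_{ij}$ with $i,j$ in the indicated eigenspaces. Substituting $a+(n-1)b=1$ and $\kappa^2=ab$ gives
\[
P^{(\tau)}_{ab}=(n-2)\,b(b-a)>0,\qquad P^{(\tau)}_{bb}=\bigl(2(n-2)-\tau\bigr)\,b(b-a).
\]
So for $\tau<2(n-2)$ and $\alpha_0$ small, every inequality in \eqref{eq:uniform-taubiric} holds along the whole family $(a,b,\dots,b)$ with $0\le a<b$ and $\kappa^2=ab$. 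These are the normalized hyperbolic tubes $\mathcal T_\rho$ from the introduction: genuine zeros of $\mathfrak S$, with $\kappa>0$, that are compatible with the pinching. Hence no sign argument on $P$ can force $a=0$, and your claim that only the Euclidean cylinder survives is false. What excludes this family is the acylindrical hypothesis, through the identity
\[
1-(n-1)|\lambda|^2=a\bigl(2(n-1)b-(n-2)a\bigr)\ge0 .
\]
In other words, every such configuration already lies in the cylindrical region $|\lambda|^2\le\frac1{n-1}$, which contradicts \eqref{eq:acylindrical-region}.

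For $2\le m\le n-1$ your first guess is the right one, but it holds unconditionally, not only ``when the spread is large.'' A pair inside the $a$-eigenspace gives
\[
P^{(\tau)}_{aa}=(2m-2-\tau)\,a(a-b)\le0<\alpha_0(b-a),
\]
using $a\ge0$, $m\ge2$ and $\tau\le2$. The acylindrical condition is not needed in this case. Your fallback through the mixed pair would not close it. Take $n=4$, $(a,a,b,b)=(\tfrac1{20},\tfrac1{20},\tfrac9{20},\tfrac9{20})$ and $\kappa^2=ab$. Then $P^{(\tau)}_{ab}=(b-a)^2>0$ and $|\lambda|^2=\tfrac{41}{100}>\tfrac13$, so only the $aa$-pair detects the contradiction. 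With these two computations inserted, your compactness argument yields the lemma exactly as in the paper.
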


\begin{proof}
All the curvature inequalities in the assertion are homogeneous under
simultaneous scaling of
$(\lambda_1,\ldots,\lambda_n,\kappa)$.  We therefore divide by $H$ and write
\begin{equation*}
 \mu_i=\frac{\lambda_i}{H},
 \qquad
 s=\frac{\kappa}{H}.
\end{equation*}
The pointwise argument in Proposition~\ref{prop:uniform-two-convexity} gives
$|\mu_i|\leq1$, while $H\geq n\kappa$ gives $0\leq s\leq1/n$.  We also have
$\sum_i\mu_i=1$.  The normalized form of
\eqref{eq:acylindrical-region} is
\begin{equation}\label{eq:normalized-acylindrical-defect}
 \sum_i\mu_i^2\geq\frac1{n-1}+\eta.
\end{equation}
After division by $H^2$, the quantitative pinching condition \eqref{eq:uniform-taubiric} becomes
\begin{equation}\label{eq:normalized-tau-pinching}
 \mu_i+\mu_j-\mu_i^2-\mu_j^2-\tau\mu_i\mu_j
 -c_{n,\tau}s^2
 \geq\alpha_0|\mu_p-\mu_q|
\end{equation}
for every $i\ne j$ and all $p,q$.

We now show that the normalized Simons commutator cannot vanish under these
conditions.  If it did, \eqref{eq:simons-commutator-principal} would give
\begin{equation}\label{eq:simons-zero}
 (\mu_i-\mu_j)(\mu_i\mu_j-s^2)=0
 \qquad\text{for every }i,j.
\end{equation}
If all $\mu_i$ were equal, then
$\mu_i=1/n$ for every $i$, and hence
\begin{equation*}
 \sum_i\mu_i^2=\frac1n<\frac1{n-1},
\end{equation*}
contrary to \eqref{eq:normalized-acylindrical-defect}.

Suppose that the $\mu_i$ are not all equal.  It follows from
\eqref{eq:simons-zero} that there are exactly two distinct values.
Indeed, the product of any two distinct values must equal $s^2$.  If three
distinct values $x,y,z$ occurred, then $xy=xz=yz=s^2$.  The first two
equalities would give $x(y-z)=0$, and hence $x=0$.  It would follow that
$s=0$ and $yz=0$, which is impossible for three distinct values.  Denote the
two values by $a>b$.  Since $ab=s^2\geq0$ and their weighted sum is one,
\begin{equation*}
 a>b\geq0.
\end{equation*}
Let $a$ have multiplicity $m$ and $b$ multiplicity $q=n-m$.

Suppose first that $q=1$.  Then $m=n-1$ and
$(n-1)a+b=1$.  A direct computation gives
\begin{equation*}
 1-(n-1)\sum_i\mu_i^2
 =b\bigl(2(n-1)a-(n-2)b\bigr)\geq0,
\end{equation*}
where the last inequality follows from $a>b\geq0$.  Thus
$\sum_i\mu_i^2\leq1/(n-1)$, again contradicting
\eqref{eq:normalized-acylindrical-defect}.

We may therefore assume that $q\geq2$.  Choose two distinct indices
$i,j$ in the $b$-eigenspace.  Since
\begin{equation*}
 (n-q)a+qb=1,
 \qquad
 s^2=ab,
 \qquad
 c_{n,\tau}=2(n-1)-\tau,
\end{equation*}
the left-hand side of
\eqref{eq:normalized-tau-pinching} for this pair is
\begin{align*}
 &2b-(2+\tau)b^2-c_{n,\tau}s^2\\
 =&(2q-2-\tau)b(b-a)\leq0.
\end{align*}
Here $2q-2-\tau\geq0$ because $q\geq2$ and $0\leq\tau\leq2$.
On the other hand, in the right-hand side of
\eqref{eq:normalized-tau-pinching} we may choose one index from the
$a$-eigenspace and one from the $b$-eigenspace.  The quantitative pinching
then gives the strictly positive lower bound
$\alpha_0(a-b)$.  This is a contradiction.

The variables satisfying $|\mu_i|\leq1$, $0\leq s\leq1/n$,
$0\leq\tau\leq2$, $\sum_i\mu_i=1$, and the two normalized inequalities form
a compact set.  If this set is empty, the conclusion is immediate.  Otherwise,
the preceding argument shows that
\begin{equation*}
 \sum_{i,j}
 (\mu_i-\mu_j)^2(\mu_i\mu_j-s^2)^2
\end{equation*}
has no zero on this set.  Its minimum is therefore positive and depends only
on $n$, $\alpha_0$, and $\eta$.  Multiplying back by $H^6$ proves the lemma.
\end{proof}

\begin{proposition}[$\tau$-bi-Ricci Poincar\'e inequality]
\label{prop:tau-poincare}
For every $\eta>0$ there exists
$\gamma=\gamma(n,\alpha_0,\eta)>0$ with the following property.
Let
\begin{equation*}
 X:M^n\longrightarrow\Hh^{n+1}_{\kappa}
\end{equation*}
be a smooth closed two-sided hypersurface immersion satisfying
$H>0$, $H\geq n\kappa$, and \eqref{eq:uniform-taubiric}.  Suppose that
$u\in W^{1,2}(M)$ vanishes almost everywhere outside the set
\eqref{eq:acylindrical-region}.  Then, for every $\varrho\geq1$,
\begin{equation}\label{eq:tau-poincare}
 \gamma\int_Mu^2H^2\,d\mu
 \leq
 \int_M\left(
 \varrho^{-1}|\nabla u|^2
 +\varrho u^2\frac{|\nabla A|^2}{H^2}
 \right)d\mu.
\end{equation}
The constant $\gamma$ is uniform for
$0\leq\tau\leq2$ and $0\leq\kappa\leq1$.
\end{proposition}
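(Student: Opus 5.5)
The plan follows the Langford--Nguyen approach \cite{LN21}*{Proposition~2.2}: contract the Simons identity \eqref{eq:full-simons} against the commutator $\mathfrak S$ itself, weighted by $u^2/H^{2}$ (or a comparable power), and integrate by parts. Lemma~\ref{lem:simons-commutator-coercivity} then provides the lower bound on the support of $u$.

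\textbf{Step 1 (contraction).} Set $w=u^2H^{-4}$. This is legitimate because $H\geq n\kappa$, $H>0$ and compactness give $H\geq c>0$. Multiplying \eqref{eq:full-simons} by $w\,\mathfrak S^{ijkl}$ and integrating gives
\begin{equation*}
 \int_M w|\mathfrak S|^2\,d\mu
 =\int_M w\,\mathfrak S^{ijkl}\bigl(\nabla_{(i}\nabla_{j)}h_{kl}-\nabla_{(k}\nabla_{l)}h_{ij}\bigr)d\mu .
\end{equation*}
The left side is at least $\gamma\int u^2H^2\,d\mu$ by Lemma~\ref{lem:simons-commutator-coercivity}. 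This uses that $u$ vanishes off the acylindrical set \eqref{eq:acylindrical-region}, and that pointwise the hypotheses of that lemma hold.

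\textbf{Step 2 (integration by parts).} On the right side I move one covariant derivative off $\nabla h$. The resulting terms are
\begin{equation*}
 -\int_M \nabla_i\bigl(w\,\mathfrak S^{ijkl}\bigr)\nabla_jh_{kl}\,d\mu
\end{equation*}
and its symmetric counterpart. Expanding, $\nabla_i w$ produces $2uH^{-4}\nabla u$ and $-4u^2H^{-5}\nabla H$. Since $\mathfrak S$ is a polynomial in $h$ and $\kappa^2 g$, we have $|\nabla\mathfrak S|\leq C(|A|^2+\kappa^2)|\nabla A|$. Proposition~\ref{prop:uniform-two-convexity} gives $|A|\leq\sqrt n H$, and $\kappa\leq H/n$, so $|\mathfrak S|\leq CH^3$ and $|\nabla\mathfrak S|\leq CH^2|\nabla A|$. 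Also $|\nabla H|\leq\sqrt n|\nabla A|$. Collecting terms, the right side is bounded by
\begin{equation*}
 C\int_M\Bigl(|u||\nabla u|\,H^{-1}|\nabla A|+u^2H^{-2}|\nabla A|^2\Bigr)d\mu .
\end{equation*}

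\textbf{Step 3 (Young's inequality).} Apply $ab\leq\frac{\varrho^{-1}}2a^2+\frac\varrho2b^2$ with $a=|\nabla u|$ and $b=|u||\nabla A|/H$. Since $\varrho\geq1$, the second term is bounded by $\varrho u^2|\nabla A|^2/H^2$. Dividing by the constant $C$ and renaming $\gamma$ yields \eqref{eq:tau-poincare}. All constants depend only on $n,\alpha_0,\eta$, so the estimate is uniform in $\tau\in[0,2]$ and $\kappa\in[0,1]$.

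\textbf{Main obstacle.} The delicate point is regularity: $u$ is only $W^{1,2}$ and $w$ involves $u^2$. I would handle this by proving the inequality for smooth $u$ and approximating. The coercivity step needs care under approximation because the approximants need not vanish off the acylindrical set. To avoid this, I would approximate $u$ in $W^{1,2}$ by Lipschitz functions supported in slightly enlarged acylindrical regions with $\eta/2$ in place of $\eta$, and apply Lemma~\ref{lem:simons-commutator-coercivity} there. Beyond this, the only real check is that every curvature factor scales homogeneously so that the final powers of $H$ match; the weight $H^{-4}$ is chosen exactly to produce $u^2H^2$ on the left and $u^2|\nabla A|^2H^{-2}$ on the right.
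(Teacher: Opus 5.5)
Your proposal is correct and follows the paper's proof essentially step for step: the same weight $u^2H^{-4}$, the same contraction of the Simons identity against $\mathfrak S$, the same bounds $|\mathfrak S|\le CH^3$, $|\nabla\mathfrak S|\le CH^2|\nabla A|$, $|\nabla H|\le\sqrt n|\nabla A|$, and the same Young's inequality. The only difference is the regularity step: the paper skips your $\eta/2$ approximation by observing that $u^2H^{-4}\in W^{1,1}(M)$ when $u\in W^{1,2}(M)$, so weak integration by parts against the smooth tensor $\mathfrak S_{ijkl}\nabla_jh_{kl}$ applies directly to $u$ (your approximation also works, at the harmless cost of replacing $\gamma(\eta)$ by $\gamma(\eta/2)$).
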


\begin{proof}
Replacing $u$ by $|u|$ does not change either side of
\eqref{eq:tau-poincare}, so we may assume that $u\geq0$.  Since $M$ is
closed and $H$ is smooth and strictly positive, the function
\begin{equation*}
 \phi:=u^2H^{-4}
\end{equation*}
belongs to $W^{1,1}(M)$.  This weight is chosen because the coercive bound
of Lemma~\ref{lem:simons-commutator-coercivity} then makes
$\phi|\mathfrak S|^2$ control $u^2H^2$, while its derivative produces exactly
the powers of $H$ needed after integration by parts.  We have
\begin{equation}\label{eq:poincare-weight-gradient}
 \nabla\phi
 =2uH^{-4}\nabla u-4u^2H^{-5}\nabla H 
\end{equation}
 almost everywhere on $M$.  Since $u=0$ almost everywhere outside the
 acylindrical set \eqref{eq:acylindrical-region},
 Lemma~\ref{lem:simons-commutator-coercivity} gives
\begin{equation}\label{eq:poincare-coercivity-start}
 \gamma_0\int_M u^2H^2\,d\mu
 \leq\int_M\phi|\mathfrak S|^2\,d\mu
\end{equation}
for some $\gamma_0=\gamma_0(n,\alpha_0,\eta)>0$.

Since
$\mathfrak S$ is symmetric in each index pair, the normalized
symmetrizations in \eqref{eq:full-simons} disappear after contraction.
Using also $\mathfrak S_{klij}=-\mathfrak S_{ijkl}$ and relabeling the two
index pairs, we obtain
\begin{align}
 |\mathfrak S|^2
 &=\mathfrak S_{ijkl}
   \bigl(\nabla_{(i}\nabla_{j)}h_{kl}
         -\nabla_{(k}\nabla_{l)}h_{ij}\bigr)\notag\\
 &=\mathfrak S_{ijkl}\nabla_i\nabla_jh_{kl}
   -\mathfrak S_{klij}\nabla_i\nabla_jh_{kl}\notag\\
 &=2\mathfrak S_{ijkl}\nabla_i\nabla_jh_{kl}.
 \label{eq:simons-contracted-full}
\end{align}
The tensor field $\mathfrak S$ and the derivatives of $h$ are smooth, whereas
$\phi\in W^{1,1}(M)$.  Thus the weak integration-by-parts formula on the
closed manifold $M$ applies directly to \eqref{eq:simons-contracted-full}:
\begin{align}
 \int_M\phi|\mathfrak S|^2\,d\mu
 &=-2\int_M\nabla_i(\phi\mathfrak S_{ijkl})
                  \nabla_jh_{kl}\,d\mu.\label{eq:poincare-ibp}
\end{align}
This avoids any support-preserving smooth approximation of $u$.

The elementary bounds from Proposition~\ref{prop:uniform-two-convexity}
give $|A|\leq\sqrt n\,H$, and the hypothesis $H\geq n\kappa$ gives
$\kappa\leq H/n$.  From the explicit formula
\eqref{eq:simons-commutator} and
$\nabla_iH=g^{kl}\nabla_i h_{kl}$ we therefore obtain
\begin{equation}\label{eq:commutator-derivative-bounds}
 |\mathfrak S|\leq C H^3,
 \qquad
 |\nabla\mathfrak S|\leq C H^2|\nabla A|,
 \qquad
 |\nabla H|\leq\sqrt n\,|\nabla A|,
\end{equation}
where $C=C(n)$.  Expanding the derivative in
\eqref{eq:poincare-ibp}, using
\eqref{eq:poincare-weight-gradient} and
\eqref{eq:commutator-derivative-bounds}, gives
\begin{align*}
 \int_M\phi|\mathfrak S|^2\,d\mu
 &\leq 2\int_M|\nabla\phi|\,|\mathfrak S|\,|\nabla A|\,d\mu
       +2\int_M\phi|\nabla\mathfrak S|\,|\nabla A|\,d\mu\\
 &\leq C\int_M\left(
 |u|H^{-1}|\nabla u||\nabla A|
 +u^2H^{-2}|\nabla A|^2\right)d\mu.
\end{align*}
For $\varrho\geq1$, Young's inequality yields pointwise
\begin{equation*}
 |u|H^{-1}|\nabla u||\nabla A|
 \leq\frac1{2\varrho}|\nabla u|^2
      +\frac{\varrho}{2}u^2\frac{|\nabla A|^2}{H^2},
\end{equation*}
and the remaining $u^2H^{-2}|\nabla A|^2$ term is bounded by the same
quantity with coefficient $\varrho$.  Consequently,
\begin{equation*}
 \int_M\phi|\mathfrak S|^2\,d\mu
 \leq \frac{C}{\varrho}\int_M|\nabla u|^2\,d\mu
       +C\varrho\int_Mu^2\frac{|\nabla A|^2}{H^2}\,d\mu.
\end{equation*}
Combining this estimate with \eqref{eq:poincare-coercivity-start} and
replacing $\gamma_0/C$ by $\gamma$ proves \eqref{eq:tau-poincare}.
\end{proof}

\subsection{Integral estimates and Stampacchia iteration}

With Proposition~\ref{prop:tau-poincare} established, the remaining argument
is the standard $L^p$--Stampacchia scheme.  We follow
\cite{HS09}*{Lemmas~5.4--5.6 and the proof of Theorem~4.6} and
\cite{LN21}*{Section~4.1}.  We record the estimates needed to control the
additional hyperbolic term and to keep the constants uniform in $\kappa$ and
in the original curvature scale.

Fix $\eta>0$ and $\sigma\in(0,1/2)$.  Define
\begin{equation*}
  f_{\sigma,\eta}
  :=H^{\sigma-2}\left(|A|^2-\left(\frac{1}{n-1}+\eta\right)H^2\right),
  \qquad f_+:=\max\{f_{\sigma,\eta},0\}.
\end{equation*}
When no confusion is possible we write $f=f_{\sigma,\eta}$.  On the support of
$f_+$,
\begin{equation*}
  |A|^2-\frac{1}{n-1}H^2\geq \eta H^2,
  \qquad 0<f_+\leq C(n)H^\sigma.
\end{equation*}
The upper bound follows from $|A|^2\le nH^2$.  It is convenient to set
\begin{equation}\label{eq:relative-gradient-tensor}
 \mathcal D_{ikl}:=H\nabla_i h_{kl}-(\nabla_iH)h_{kl}.
\end{equation}
The identity
$\nabla_i h_{kl}=H^{-1}\mathcal D_{ikl}
+H^{-1}(\nabla_iH)h_{kl}$ and \eqref{eq:kato-hs} give
$|\nabla H|^2\leq8\beta^{-2}H^{-2}|\mathcal D|^2$, where
$\beta=\beta(n,\alpha_0)$ is the constant in
Proposition~\ref{prop:uniform-two-convexity}.  Using also
$|A|^2\leq nH^2$, we obtain
\begin{equation}\label{eq:D-controls-gradient}
 \frac{|\nabla A|^2}{H^2}
 \leq 2\frac{|\mathcal D|^2}{H^4}
 +2\frac{|A|^2}{H^4}|\nabla H|^2
 \leq C(n,\alpha_0)\frac{|\mathcal D|^2}{H^4}.
\end{equation}

The Euclidean part of the following evolution identity is the
Huisken--Sinestrari calculation \cite{HS09}*{(5.6)}.  We retain the
constant-curvature term explicitly.  Compare also
\cite{LN21}*{Lemma~4.5}.

\begin{lemma}
\begin{align}
 (\partial_t-\Delta)f_{\sigma,\eta}
 &=2(1-\sigma)\left\langle
       \nabla f_{\sigma,\eta},\frac{\nabla H}{H}\right\rangle
 -\frac{2}{H^{4-\sigma}}
  |H\nabla_i h_{kl}-(\nabla_iH)h_{kl}|^2 \notag\\
 &\quad-\sigma(1-\sigma)f_{\sigma,\eta}
          \frac{|\nabla H|^2}{H^2}
 +\sigma f_{\sigma,\eta}(|A|^2-n\kappa^2)\notag\\
 &\quad+4n\kappa^2H^{\sigma-2}
       \left(|A|^2-\frac1nH^2\right).
 \label{eq:f-evol}
\end{align}
\end{lemma}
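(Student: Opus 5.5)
We obtain \eqref{eq:f-evol} by writing $f_{\sigma,\eta}=H^{\sigma}\,\dfrac{|A|^2}{H^2}-\left(\frac1{n-1}+\eta\right)H^{\sigma}$ and using the quotient and product rules for the heat operator. The only inputs are the evolution equations \eqref{eq:H-evol}, \eqref{eq:H2-evol} and \eqref{eq:A2-evol}. Following \cite{HS09}*{(5.6)}, we first compute $(\partial_t-\Delta)$ of $g:=|A|^2/H^2$, then that of $H^\sigma$, and finally combine them via
\[
(\partial_t-\Delta)(H^\sigma g)=H^\sigma(\partial_t-\Delta)g+g(\partial_t-\Delta)H^\sigma-2\langle\nabla H^\sigma,\nabla g\rangle .
\]
Throughout we keep track of the $\kappa^2$-terms separately from the Euclidean ones.

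For the quotient $g$, the rule
\[
(\partial_t-\Delta)\frac{u}{v}=\frac{(\partial_t-\Delta)u}{v}-\frac{u(\partial_t-\Delta)v}{v^2}+\frac2v\Bigl\langle\nabla v,\nabla\frac uv\Bigr\rangle,
\qquad u=|A|^2,\ v=H^2,
\]
applies. The reaction terms $2|A|^2(|A|^2-n\kappa^2)/H^2$ and $-|A|^2\cdot2H^2(|A|^2-n\kappa^2)/H^4$ cancel exactly, which is the familiar scale-invariance of $|A|^2/H^2$. What remains from the reaction is the term $4n\kappa^2H^{-2}(|A|^2-\frac1nH^2)$ inherited from \eqref{eq:A2-evol}. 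The gradient terms $-2|\nabla A|^2/H^2+2|A|^2|\nabla H|^2/H^4$ combine, via the standard identity
\[
|H\nabla_ih_{kl}-\nabla_iHh_{kl}|^2=H^2|\nabla A|^2-2H\langle\nabla_iH\,h_{kl},\nabla_ih_{kl}\rangle+|A|^2|\nabla H|^2
\]
and $\langle\nabla H,\nabla|A|^2\rangle=2h_{kl}\nabla_iH\nabla_ih_{kl}$, into $-2H^{-4}|\mathcal D|^2$ plus a multiple of $\langle\nabla g,\nabla H\rangle/H$. This gives
\[
(\partial_t-\Delta)g=\frac{2}{H}\langle\nabla H,\nabla g\rangle-\frac{2}{H^4}|\mathcal D|^2+\frac{4n\kappa^2}{H^2}\Bigl(|A|^2-\frac1nH^2\Bigr).
\]
For the power, \eqref{eq:H-evol} gives
\[
(\partial_t-\Delta)H^\sigma=\sigma H^\sigma(|A|^2-n\kappa^2)+\sigma(1-\sigma)H^{\sigma-2}|\nabla H|^2 .
\]

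It remains to substitute $g=H^{-\sigma}f+\frac1{n-1}+\eta$ and to re-express the gradient terms through $\nabla f$. We have
\[
\nabla g=H^{-\sigma}\nabla f-\sigma H^{-\sigma-1}f\nabla H .
\]
Consequently $H^\sigma\cdot\frac2H\langle\nabla H,\nabla g\rangle-2\langle\nabla H^\sigma,\nabla g\rangle$ equals $2(1-\sigma)\langle\nabla f,\nabla H/H\rangle$ plus the multiple $-2\sigma(1-\sigma)f|\nabla H|^2/H^2$. The contributions of the constant part of $g$ and of the $H^\sigma$ evolution cancel in the combination $f=H^\sigma g-(\frac1{n-1}+\eta)H^\sigma$. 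Adding the $g(\partial_t-\Delta)H^\sigma$ term restricted to the $f$-part gives the remaining $+\sigma(1-\sigma)f|\nabla H|^2/H^2$ and $\sigma f(|A|^2-n\kappa^2)$. The net coefficient is $-\sigma(1-\sigma)$, as claimed.

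The main obstacle is the bookkeeping of these gradient cross terms: one must check that the coefficients collapse to exactly $2(1-\sigma)$ and $-\sigma(1-\sigma)$. I would verify this by expanding everything in terms of $\nabla H$ and $\nabla|A|^2$ before regrouping. The $\kappa$-terms are easy to track, since only \eqref{eq:A2-evol} contributes a non-cancelling term, and it produces exactly the last line of \eqref{eq:f-evol}.
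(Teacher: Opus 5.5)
Your computation is correct and every coefficient checks out; the difference from the paper is only in how $f_{\sigma,\eta}$ is factored.

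The paper writes $f_{\sigma,\eta}=H^{\sigma-2}Z$ with $Z=|A|^2-(\frac1{n-1}+\eta)H^2$. It computes $(\partial_t-\Delta)Z$ and $(\partial_t-\Delta)H^{\sigma-2}$, applies the product rule once, and expresses $|\mathcal D|^2=H^2|\nabla A|^2-H\langle\nabla Z,\nabla H\rangle+\bigl(Z-(\frac1{n-1}+\eta)H^2\bigr)|\nabla H|^2$ directly through $Z$. In that route the reaction coefficient appears as $2+(\sigma-2)=\sigma$. The coefficient of $f|\nabla H|^2/H^2$ has to be assembled from three contributions, $2-(\sigma-2)(\sigma-3)-2(1-\sigma)(\sigma-2)=-\sigma(1-\sigma)$.

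You instead write $f_{\sigma,\eta}=H^\sigma\bigl(g-\frac1{n-1}-\eta\bigr)$ with $g=|A|^2/H^2$. You first derive the scale-invariant formula $(\partial_t-\Delta)g=\frac2H\langle\nabla H,\nabla g\rangle-2H^{-4}|\mathcal D|^2+4n\kappa^2H^{-2}(|A|^2-\frac1nH^2)$, in which the reaction terms cancel outright. All $\sigma$-dependence then comes from the single factor $H^\sigma$. Its cross term gives $2(1-\sigma)\langle\nabla f,\nabla H/H\rangle-2\sigma(1-\sigma)f|\nabla H|^2/H^2$, and its own evolution gives $\sigma f(|A|^2-n\kappa^2)+\sigma(1-\sigma)f|\nabla H|^2/H^2$.

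Your organization makes the bookkeeping more transparent and isolates the hyperbolic correction $4n\kappa^2H^{-2}(|A|^2-\frac1nH^2)$ already at the level of $g$. The paper's version is a single product-rule computation, at the cost of less transparent coefficient arithmetic.

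One minor imprecision in your last paragraph: the $-n\kappa^2$ inside $\sigma f(|A|^2-n\kappa^2)$ comes from \eqref{eq:H-evol} through $H^\sigma$, not from \eqref{eq:A2-evol}. Your formulas already account for it correctly.
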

\begin{proof}
Set
\begin{equation*}
 Z =|A|^2-
 \left(\frac{1}{n-1}+\eta\right)H^2,
\end{equation*}
so that $f_{\sigma,\eta}=H^{\sigma-2}Z$.
Equations \eqref{eq:H2-evol} and \eqref{eq:A2-evol} give
\begin{align*}
 (\partial_t-\Delta)Z
 ={}&
 -2\left(
 |\nabla A|^2-
 \left(\frac{1}{n-1}+\eta\right)|\nabla H|^2
 \right)\\
 &+2(|A|^2-n\kappa^2)Z
 +4n\kappa^2
 \left(
 |A|^2-\frac1nH^2
 \right).
\end{align*}
The evolution equation \eqref{eq:H-evol} also gives
\begin{align*}
 (\partial_t-\Delta)H^{\sigma-2}
 ={}&
 (\sigma-2)H^{\sigma-2}
 (|A|^2-n\kappa^2)\\
 &-(\sigma-2)(\sigma-3)
 H^{\sigma-4}|\nabla H|^2.
\end{align*}
Moreover, from the definition \eqref{eq:relative-gradient-tensor},
\begin{align*}
 |D|^2 ={}&
 H^2|\nabla A|^2
 -H\langle\nabla Z,\nabla H\rangle\\
 &+ \left[
 Z-\left(\frac{1}{n-1}+\eta\right)H^2
 \right]|\nabla H|^2.
\end{align*}
Applying the product rule to
$f_{\sigma,\eta}=H^{\sigma-2}Z$ and using this identity yields
\eqref{eq:f-evol}.
\end{proof}

\begin{proposition}[Uniform $L^p$ bound]\label{prop:lp-bound}
For every $\eta>0$ there are constants $c_4,c_5>0$, depending only on $n$,
$\alpha_0$, and $\eta$, such that if
\begin{equation*}
 p\geq c_4,
 \qquad
 0<\sigma\leq\min\left\{\frac12,c_5p^{-1/2}\right\},
\end{equation*}
then, for $0\leq t<T$,
\begin{equation}\label{eq:lp-bound}
 \int_{M_t}f_+^p\,d\mu
 \leq\int_{M_0}f_+^p\,d\mu
 +C(n,\alpha,\eta,\sigma,p)|M_0|t.
\end{equation}
The constant is independent of $\kappa$ and $T$.
\end{proposition}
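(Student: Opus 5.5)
We follow the Huisken--Sinestrari $L^p$ scheme (\cite{HS09}*{Lemma~5.5}), working throughout in the normalization \eqref{eq:normalized-space-form-setting}. We differentiate $\int_{M_t} f_+^p\,d\mu$ using \eqref{eq:f-evol} and \eqref{s2.evl-dmu}; the area term $-\int f_+^pH^2$ is discarded. Integrating by parts the Laplacian term produces
\begin{equation*}
 -p(p-1)\int f_+^{p-2}|\nabla f|^2,
\end{equation*}
and the gradient term $2(1-\sigma)p\int f_+^{p-1}\langle\nabla f,\nabla H/H\rangle$ is absorbed by Young's inequality. One half goes into $p(p-1)\int f_+^{p-2}|\nabla f|^2$, and the other half goes into the good term
\begin{equation*}
 -2p\int f_+^{p-1}H^{\sigma-4}|\mathcal D|^2,
\end{equation*}
via $|\nabla H|^2\le 8\beta^{-2}H^{-2}|\mathcal D|^2$. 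This absorption is valid once $p\ge c_4(n,\alpha_0)$. The term $-\sigma(1-\sigma)f|\nabla H|^2/H^2$ is nonpositive on the support of $f_+$ and is dropped. This yields
\begin{equation*}
 \frac{d}{dt}\int f_+^p\le -\frac{p(p-1)}{2}\int f_+^{p-2}|\nabla f_+|^2-p\int f_+^{p-1}\frac{|\mathcal D|^2}{H^{4-\sigma}}+p\sigma\int f_+^p|A|^2+\mathcal E,
\end{equation*}
where $\mathcal E$ collects the $\kappa^2$ terms.

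The hyperbolic terms are harmless. The term $-\sigma n\kappa^2 f_+^p$ is nonpositive. The term $4n\kappa^2 p f_+^{p-1}H^{\sigma-2}(|A|^2-H^2/n)$ is bounded by $C p\kappa^2 f_+^{p-1}H^{\sigma}\le Cp\,\kappa^2 H^{\sigma}f_+^{p-1}$, using $|A|^2\le nH^2$. Since $\kappa\le1$ and $H\ge\alpha_2>0$, Young's inequality splits this into $\varepsilon p\int f_+^pH^2$ plus a term $C(n,\alpha,\eta,\sigma,p)\int_{\{f>0\}} 1\le C|M_0|$, because area is nonincreasing. Here we used $f_+\le CH^\sigma$, $\sigma<2$ and the lower bound on $H$ to convert powers of $H$. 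The $\varepsilon$-term is absorbed into the Poincaré step below.

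The main step is to control the bad reaction term $p\sigma\int f_+^p|A|^2\le np\sigma\int f_+^pH^2$. For this we apply Proposition~\ref{prop:tau-poincare} with $u=f_+^{p/2}$, which vanishes outside \eqref{eq:acylindrical-region}, and with a parameter $\varrho\ge1$ to be chosen. This gives
\begin{equation*}
 \gamma\int f_+^pH^2\le \varrho^{-1}\frac{p^2}{4}\int f_+^{p-2}|\nabla f_+|^2+\varrho\int f_+^p\frac{|\nabla A|^2}{H^2}.
\end{equation*}
By \eqref{eq:D-controls-gradient} and $f_+\le CH^\sigma$, the last integral is at most $C\int f_+^{p-1}H^{\sigma-4}|\mathcal D|^2$. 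Multiplying by $np\sigma/\gamma$ and choosing $\varrho\sim\sqrt p$, the right side is dominated by the two good terms provided
\begin{equation*}
 \frac{\sigma p^3}{\varrho}\lesssim p^2
 \qquad\text{and}\qquad
 \sigma\varrho p\lesssim p,
\end{equation*}
that is, $\sigma\le c_5p^{-1/2}$. This is exactly the hypothesis of the proposition. The constants depend only on $n,\alpha_0,\eta$ through $\gamma$ and $\beta$, and never on $\kappa$.

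Combining these estimates gives $\frac{d}{dt}\int f_+^p\le C(n,\alpha,\eta,\sigma,p)|M_0|$. Integrating in time yields \eqref{eq:lp-bound}. The delicate point is the bookkeeping of the Poincaré absorption, namely getting the $\varrho$-balance right. Uniformity in $\kappa$ is automatic, since $\kappa\le1$ appears only in the favorable or lower-order terms treated above.
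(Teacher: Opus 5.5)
Your proposal is correct and follows the paper's proof essentially step by step. It uses the same Young absorption of the mixed gradient term via \eqref{eq:kato-hs} for large $p$, the same Young splitting of the $\kappa^2$ term using $\kappa\leq1$ and $H\geq\alpha_2$, and Proposition~\ref{prop:tau-poincare} with $u=f_+^{p/2}$ and $\varrho=p^{1/2}$, which forces exactly $\sigma\leq c_5p^{-1/2}$. The one detail to make explicit is that your $\varepsilon$ in the hyperbolic Young splitting must be at most $c\,p^{-1/2}$ (the paper simply takes it equal to $\sigma$), so that the resulting $\varepsilon p\int H^2f_+^p$ term is absorbed by the same Poincar\'e bookkeeping.
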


\begin{proof}
We work on the set $\{f>0\}$ and use a smooth convex approximation of the
positive part.  Multiplying \eqref{eq:f-evol} by $pf_+^{p-1}$, integrating by
parts, and using \eqref{s2.evl-dmu}, we obtain
\begin{align*}
 \frac{d}{dt}\int f_+^p d\mu
 &\leq-p(p-1)\int f_+^{p-2}|\nabla f|^2d\mu
 +2p(1-\sigma)\int f_+^{p-1}
     \left\langle\nabla f,\frac{\nabla H}{H}\right\rangle d\mu\\
 &\quad-2p\int f_+^{p-1}H^{\sigma-4}|\mathcal D|^2d\mu
 +p\sigma\int f_+^p(|A|^2-n\kappa^2)d\mu\\
 &\quad+Cp\kappa^2\int H^\sigma f_+^{p-1}d\mu.
\end{align*}
The nonpositive term
$-p\sigma(1-\sigma)\int f_+^p|\nabla H|^2/H^2$ and the contribution
$-\int H^2f_+^p$ from the evolving measure have only been discarded.

By \eqref{eq:kato-hs},
$|\nabla H|/H\leq C|\mathcal D|/H^2$.  Consequently,
\begin{align*}
 &2p(1-\sigma)\int f_+^{p-1}
   \left|\left\langle\nabla f,\frac{\nabla H}{H}\right\rangle\right|d\mu\\
 \leq&\frac{p(p-1)}4\int f_+^{p-2}|\nabla f|^2d\mu
 +\frac{Cp}{p-1}\int f_+^p\frac{|\mathcal D|^2}{H^4}d\mu\\
 \leq&\frac{p(p-1)}4\int f_+^{p-2}|\nabla f|^2d\mu
 +C\int f_+^{p-1}H^{\sigma-4}|\mathcal D|^2d\mu.
\end{align*}
Here the last step uses $f_+\leq CH^\sigma$.  For all sufficiently large
$p$, the final term is absorbed into the negative $\mathcal D$-term.

The reaction term is bounded by
\begin{equation*}
 p\sigma\int f_+^p(|A|^2-n\kappa^2)
 \leq Cp\sigma\int H^2f_+^p.
\end{equation*}
For the remaining space-form term, Young's inequality gives, for any
$\theta>0$,
\begin{equation*}
 H^\sigma f_+^{p-1}
 \leq\theta H^2f_+^p
 +C_{\theta,p,\sigma}H^{p\sigma-2(p-1)}.
\end{equation*}
The last exponent is nonpositive for $p\geq2$ and
$0<\sigma<1/2$, and $H\geq\alpha_2$ under
\eqref{eq:normalized-space-form-setting}.  Taking $\theta=\sigma$ and using
$0\leq\kappa\leq1$, we find constants $c_0>0$ and $p_0>2$, depending only
on $n$ and $\alpha_0$, such that, for every $p\geq p_0$,
\begin{align}
 \frac{d}{dt}\int_{M_t}f_+^p\,d\mu
 &\leq-c_0p(p-1)\int_{M_t}f_+^{p-2}|\nabla f|^2\,d\mu\notag\\
 &\quad-c_0p\int_{M_t}f_+^{p-1}H^{\sigma-4}
                  |\mathcal D|^2\,d\mu\notag\\
 &\quad+Cp\sigma\int_{M_t}H^2f_+^p\,d\mu
       +C(n,\alpha,p,\sigma)|M_t|.
 \label{eq:first-lp}
\end{align}

To estimate the first positive term in \eqref{eq:first-lp}, we use the Poincar\'e estimate in Proposition~\ref{prop:tau-poincare}. Set $u=f_+^{p/2}$, which vanishes
outside the set \eqref{eq:acylindrical-region}, and apply
Proposition~\ref{prop:tau-poincare} with $\varrho=p^{1/2}$.  Since
\begin{equation*}
 |\nabla u|^2=\frac{p^2}{4}f_+^{p-2}|\nabla f|^2,
\end{equation*}
while \eqref{eq:D-controls-gradient} and $f_+\leq CH^\sigma$ give
\begin{align*}
 \int f_+^p\frac{|\nabla A|^2}{H^2}\,d\mu
 &\leq C\int f_+^p\frac{|\mathcal D|^2}{H^4}\,d\mu\\
 &\leq C\int f_+^{p-1}H^{\sigma-4}|\mathcal D|^2\,d\mu,
\end{align*}
we obtain, for every $p\geq2$,
\begin{align}
 \int_{M_t}H^2f_+^p\,d\mu
 &\leq Cp^{3/2}\int_{M_t}f_+^{p-2}|\nabla f|^2\,d\mu
 +Cp^{1/2}\int_{M_t}f_+^{p-1}H^{\sigma-4}
              |\mathcal D|^2\,d\mu,
 \label{eq:poincare-cylindrical}
\end{align}
where $C=C(n,\alpha_0,\eta)$ is independent of $\kappa$.

Finally, substituting \eqref{eq:poincare-cylindrical} into
\eqref{eq:first-lp}, we obtain
\begin{align*}
 \frac{d}{dt}\int_{M_t}f_+^p\,d\mu
 &\leq
 -\bigl(c_0p(p-1)-C\sigma p^{5/2}\bigr)
 \int_{M_t}f_+^{p-2}|\nabla f|^2\,d\mu\\
 &\quad
 -\bigl(c_0p-C\sigma p^{3/2}\bigr)
 \int_{M_t}f_+^{p-1}H^{\sigma-4}
 |\mathcal D|^2\,d\mu\\
 &\quad
 +C(n,\alpha,\eta,p,\sigma)|M_t|.
\end{align*}
Choose $c_4\geq\max\{p_0,2\}$ sufficiently large and then
$0<c_5\leq1/2$ sufficiently small, depending only on $n$, $\alpha_0$, and
$\eta$, so that
\begin{equation*}
 c_0p(p-1)-C\sigma p^{5/2}\geq0,
 \qquad
 c_0p-C\sigma p^{3/2}\geq0
\end{equation*}
whenever
\begin{equation*}
 p\geq c_4,
 \qquad
 0<\sigma\leq c_5p^{-1/2}.
\end{equation*}
It follows that
\begin{equation*}
 \frac{d}{dt}\int_{M_t}f_+^p\,d\mu
 \leq C(n,\alpha,\eta,p,\sigma)|M_t|.
\end{equation*}
Since the area is nonincreasing along the smooth mean curvature flow,
integration over $[0,t]$ yields  \eqref{eq:lp-bound}.
\end{proof}

\begin{lemma}[Truncated spacetime estimate]\label{lem:truncated-energy}
Fix $\eta>0$ under \eqref{eq:normalized-space-form-setting}, and choose $ r>{(n+2)}/{2}$.  
There is $p_0=p_0(n,\alpha,\eta,r)$ such that the following holds.  If
$p\geq p_0$, $\sigma=p^{-1}$, $f=f_{\sigma,\eta}$, and
\begin{equation*}
 k\geq k_0:=1+\sup_{M_0}f_+,
 \qquad f_k=(f-k)_+,
\end{equation*}
then
\begin{align}
 &\sup_{0\leq s<T}\int_{M_s}f_k^p\,d\mu
 +\int_0^T\!\!\int_{M_s}
   \left(f_k^{p-2}|\nabla f|^2
   +f_k^{p-1}H^{\sigma-4}|\mathcal D|^2
   +H^2f_k^p\right)d\mu\,ds\notag\\
 &\hspace{35mm}\leq
 C\int_0^T\!\!\int_{\{f(\cdot,s)>k\}}
   \bigl(H^2f_+^p+1\bigr)d\mu\,ds,
 \label{eq:truncated-energy}
\end{align}
where $C=C(n,\alpha,\eta,p)$.  Moreover,
\begin{equation}\label{eq:weighted-integrability}
 \int_0^T\!\!\int_{M_s}\bigl(H^2f_+^p+1\bigr)^r\,d\mu\,ds
 \leq C(n,\alpha,\eta,p,r).
\end{equation}
\end{lemma}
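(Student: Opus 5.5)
The plan is to prove \eqref{eq:truncated-energy} by a truncated version of the energy computation in Proposition~\ref{prop:lp-bound}, without the Poincar\'e inequality. We then prove \eqref{eq:weighted-integrability} by rerunning Proposition~\ref{prop:lp-bound} at a larger exponent and a slightly larger $\sigma$, this time keeping the Poincar\'e-controlled term $\int H^2f^q$. Throughout, we work under \eqref{eq:normalized-space-form-setting}, so $T\leq\frac n2\alpha_2^{-2}$, $|M_t|\leq\alpha_1$ and $H\geq\alpha_2$.

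\emph{Step 1: the truncated estimate.} Multiply \eqref{eq:f-evol} by $pf_k^{p-1}$ and integrate over $M_t$, using a smooth convex approximation of $(\cdot-k)_+$.
\begin{itemize}
\item The Laplacian gives $-p(p-1)\int f_k^{p-2}|\nabla f|^2$.
\item The $\mathcal D$-term gives $-2p\int f_k^{p-1}H^{\sigma-4}|\mathcal D|^2$.
\item The cross term $2p(1-\sigma)\int f_k^{p-1}\langle\nabla f,\nabla H/H\rangle$ is absorbed exactly as before. We use \eqref{eq:kato-hs} and the bound $f_k\leq f_+\leq CH^\sigma$, which makes $f_k^p/H^4\leq Cf_k^{p-1}H^{\sigma-4}$. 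This holds for $p$ large.
\item By \eqref{s2.evl-dmu}, the evolving measure contributes $-\int H^2f_k^p$. We keep this term on the left.
\item The reaction term is bounded by $p\sigma\int f_k^{p-1}f|A|^2\leq Cp\sigma\int_{\{f>k\}}H^2f_+^p$. Since $\sigma=p^{-1}$, its coefficient is $C(n)$ and does not grow with $p$.
\item The hyperbolic term $4n\kappa^2H^{\sigma-2}(|A|^2-\frac1nH^2)\leq CH^\sigma$ contributes $Cp\int_{\{f>k\}}f_k^{p-1}H^\sigma$. Young's inequality gives $f_+^{p-1}H^\sigma\leq H^2f_+^p+C_pH^{p\sigma-2(p-1)}$. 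The last power of $H$ is nonpositive and $H\geq\alpha_2$, so this term is at most $C\int_{\{f>k\}}(H^2f_+^p+1)$.
\end{itemize}
Since $k\geq k_0>\sup_{M_0}f_+$, we have $f_k\equiv0$ at $t=0$. Integrating in time from $0$ to $s$ and taking the supremum over $s$ then gives \eqref{eq:truncated-energy}, with the $H^2f_k^p$ term supplied by the measure evolution.

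\emph{Step 2: weighted integrability.} Let $q=pr$ and write $f_{\sigma,\eta}=H^{\sigma-\sigma'}f_{\sigma',\eta}$. Choose $\sigma'$ so that $H^{2r}f_{\sigma,\eta}^{pr}=H^2f_{\sigma',\eta}^q$, that is,
\begin{equation*}
 \sigma'=\frac1p+\frac{2r-2}{pr}\leq\frac3p.
\end{equation*}
For $p\geq p_0(n,\alpha,\eta,r)$ this gives $q\geq c_4$ and $\sigma'\leq\frac{c_5}{2}q^{-1/2}$. Since $(H^2f_+^p+1)^r\leq2^{r}(H^{2r}f_+^{pr}+1)$, and $\int_0^T\int1\leq\alpha_1T\leq C$, it suffices to bound $\int_0^T\int H^2(f_{\sigma',\eta})_+^q$.

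Rerun the proof of Proposition~\ref{prop:lp-bound} with $(q,\sigma')$. Because $\sigma'$ is only half the admissible size, the final differential inequality retains a positive multiple of the two gradient terms:
\begin{equation*}
 \frac{d}{dt}\int f_+^q+c\,q^2\int f_+^{q-2}|\nabla f|^2+c\,q\int f_+^{q-1}H^{\sigma'-4}|\mathcal D|^2\leq C|M_t|.
\end{equation*}
Integrating over $[0,T)$ bounds the spacetime integrals of both gradient terms by $\int_{M_0}f_+^q+C\alpha_1T$. This is bounded since $f_+\leq CH^{\sigma'}$, $|A_0|\leq1$ and $|M_0|\leq\alpha_1$. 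Finally, \eqref{eq:poincare-cylindrical} bounds $\int H^2f_+^q$ at each time by these gradient terms with polynomial factors in $q$. Integrating in time gives \eqref{eq:weighted-integrability}.

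The main obstacle is the exponent bookkeeping in Step 2. One must choose $\sigma'$ to convert $H^{2r}f^{pr}$ into $H^2f^{q}$, check that $\sigma'\sim3/p$ still lies below $c_5q^{-1/2}\sim p^{-1/2}$, and make sure a definite fraction of the gradient terms is kept rather than discarded. The only $\kappa$-dependent term is the hyperbolic term, and it is handled uniformly through $\kappa\leq1$ and $H\geq\alpha_2$.
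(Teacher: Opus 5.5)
Your proposal is correct. Step 1 is essentially the paper's argument. The only difference is cosmetic and concerns the hyperbolic term. The paper applies Young's inequality to produce $\frac12H^2f_k^p+C\kappa^{2p}H^{3-2p}$ and absorbs the first piece into the measure term. You bound $f_k\le f_+$ first and leave $H^2f_+^p$ on the right-hand side, which the target inequality allows.

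Step 2 takes a genuinely different and longer route.

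The paper shifts the whole weight. Since $f_{3/p,\eta}=H^{2/p}f_{1/p,\eta}$, one has exactly $H^{2r}(f_{1/p,\eta})_+^{pr}=(f_{3/p,\eta})_+^{pr}$. Proposition~\ref{prop:lp-bound}, applied as stated with the pair $(pr,3/p)$, then bounds $\sup_s\int_{M_s}(f_{3/p,\eta})_+^{pr}$. Integrating over $[0,T)$ with $T\le\frac n2\alpha_2^{-2}$ finishes the proof.

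You shift only by $(2r-2)/(pr)$, so a factor $H^2$ remains. This forces you to reopen the proof of Proposition~\ref{prop:lp-bound} and keep a definite fraction of the two gradient terms, which is why you need $\sigma'$ at half the admissible size. You then integrate those terms in time and apply \eqref{eq:poincare-cylindrical} a second time. These steps check out. Halving $\sigma'$ leaves coefficients at least $\frac12c_0q(q-1)$ and $\frac12c_0q$. Also, \eqref{eq:poincare-cylindrical} holds for any $\sigma'$, because $f_+\le C(n)H^{\sigma'}$.

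Your route gives spacetime control of the gradient terms as a by-product, but that is not needed here. You already observed $\sigma'\le3/p$. Taking $\sigma'=3/p$ would remove the Poincar\'e step and let you use Proposition~\ref{prop:lp-bound} as a black box, with no margin inside the admissible range.
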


\begin{proof}
Choose $p_0$ sufficiently large so that both pairs
\begin{equation*}
 (p,\sigma)=\left(p,\frac1p\right),
 \qquad
 (pr,\sigma+2/p)=\left(pr,\frac3p\right)
\end{equation*}
satisfy the assumptions of Proposition~\ref{prop:lp-bound}.  This is possible
because $3/p\leq c_5(pr)^{-1/2}$ 
for all sufficiently large $p$.

We first prove \eqref{eq:truncated-energy}.  Since
$k\geq1+\sup_{M_0}f_+$, we have $f_k(\cdot,0)=0$.  Using a smooth convex
approximation of $s\mapsto(s-k)_+$, multiply \eqref{eq:f-evol} by
$pf_k^{p-1}$ and integrate over $M_t$.  On $\{f(\cdot,t)>k\}$ one has
$f=f_+$, $\nabla f_k=\nabla f$, and
$f_k\leq f_+\leq C H^\sigma$.  Estimating the mixed gradient term exactly
as in the derivation of \eqref{eq:first-lp}, and retaining the contribution
$-\int H^2f_k^p\,d\mu$ from the evolution of the measure, we obtain
\begin{align*}
 \frac{d}{dt}\int_{M_t}f_k^p\,d\mu
 &\leq
 -cp^2\int_{M_t}f_k^{p-2}|\nabla f|^2\,d\mu\\
 &\quad
 -cp\int_{M_t}f_k^{p-1}H^{\sigma-4}
 |\mathcal D|^2\,d\mu
 -\int_{M_t}H^2f_k^p\,d\mu\\
 &\quad
 +Cp\sigma\int_{\{f(\cdot,t)>k\}}H^2f_+^p\,d\mu
 +Cp\kappa^2\int_{\{f(\cdot,t)>k\}}H^\sigma f_k^{p-1}\,d\mu,
\end{align*}
after increasing $p_0$ if necessary.  Here we have also used
\begin{equation*}
 f f_k^{p-1}\leq f_+^p
 \qquad\text{on }\{f(\cdot,t)>k\}
\end{equation*}
to estimate the reaction term.

Since $\sigma=1/p$, Young's inequality gives
\begin{equation*}
 Cp\kappa^2H^{1/p}f_k^{p-1}
 \leq
 \frac12H^2f_k^p
 +C(n,p)\kappa^{2p}H^{3-2p}.
\end{equation*}
Under the normalized assumptions,
$0\leq\kappa\leq1$ and $H\geq\alpha_2$, while $p\geq2$. Hence
\begin{equation*}
 \kappa^{2p}H^{3-2p}\leq C(n,\alpha_2,p).
\end{equation*}
Therefore,
\begin{align*}
 \frac{d}{dt}\int_{M_t}f_k^p\,d\mu
 &+cp^2\int_{M_t}f_k^{p-2}|\nabla f|^2\,d\mu\\
 &+cp\int_{M_t}f_k^{p-1}H^{\sigma-4}
 |\mathcal D|^2\,d\mu
 +\frac12\int_{M_t}H^2f_k^p\,d\mu\\
 &\leq
 C\int_{\{f(\cdot,t)>k\}}\bigl(H^2f_+^p+1\bigr)\,d\mu,
\end{align*}
where we used $p\sigma=1$.  Integrating over $[0,t]$, taking the supremum
over $t<T$, and then letting $t\uparrow T$ proves
\eqref{eq:truncated-energy}.

It remains to prove \eqref{eq:weighted-integrability}.  From the definition
of $f_{\sigma,\eta}$, $ f_{\sigma+2/p,\eta}
 =H^{2/p}f_{\sigma,\eta}$.  
Since $\sigma=1/p$, we have
\begin{equation*}
 H^{2r}(f_{\sigma,\eta})_+^{pr}
 =\bigl(f_{3/p,\eta}\bigr)_+^{pr}.
\end{equation*}
Applying Proposition~\ref{prop:lp-bound} with exponent $pr$ and parameter
$3/p$, we obtain, for every $0\leq s<T$,
\begin{equation*}
 \int_{M_s}\bigl(f_{3/p,\eta}\bigr)_+^{pr}\,d\mu
 \leq
 \int_{M_0}\bigl(f_{3/p,\eta}\bigr)_+^{pr}\,d\mu
 +C(n,\alpha,\eta,p,r)|M_0|\,s.
\end{equation*}
On the initial hypersurface, $ H_0^2\leq n|A_0|^2\leq n$  
and
$(f_{3/p,\eta})_+\leq C(n)H^{3/p}$.  Together with
$|M_0|\leq\alpha_1$, this gives
\begin{equation*}
 \int_{M_0}\bigl(f_{3/p,\eta}\bigr)_+^{pr}\,d\mu
 \leq C(n,\alpha_1,p,r).
\end{equation*}
Integrating in time and using $T\leq\frac n2\alpha_2^{-2}$ 
from Lemma~\ref{lem:time-upper}, we conclude that
\begin{equation*}
 \int_0^T\!\!\int_{M_s}
 H^{2r}(f_{\sigma,\eta})_+^{pr}\,d\mu\,ds
 \leq C(n,\alpha,\eta,p,r).
\end{equation*}
Finally,
\begin{equation*}
 (H^2f_+^p+1)^r
 \leq C(r)\bigl(H^{2r}f_+^{pr}+1\bigr),
\end{equation*}
and
\begin{equation*}
 \int_0^T|M_s|\,ds\leq T|M_0|.
\end{equation*}
This proves \eqref{eq:weighted-integrability}.
\end{proof}

\begin{proof}[Proof of Theorem~\ref{thm:cylindrical}]
It suffices to consider $0<\eta\leq1$.  We first work under the normalized
assumptions \eqref{eq:normalized-space-form-setting}.  Fix $r>(n+2)/2$.  
Apply Lemma~\ref{lem:truncated-energy} with $\eta/2$, and choose
$p=p(n,\alpha,\eta,r)$ sufficiently large so that all the conclusions of that
lemma hold.  Set
\begin{equation*}
 \sigma=\frac1p,
 \qquad
 f=f_{\sigma,\eta/2},
 \qquad
 k_0=1+\sup_{M_0}f_+,
 \qquad
 f_k=(f-k)_+.
\end{equation*}
For $k\geq k_0$, define
\begin{equation*}
 \mathcal A(k)
 :=
 \int_0^T
 \bigl|\{x\in M_t:f(x,t)>k\}\bigr|\,dt.
\end{equation*}
We shall prove that $\mathcal A(k)=0$ for some uniformly bounded $k$.

Let
\begin{equation*}
 q_0:=\frac{n+2}{n},
 \qquad
 w:=f_k^{p/2}.
\end{equation*}
Since the ambient hyperbolic space is complete, simply connected, and
nonpositively curved, the Michael--Simon Sobolev inequality in \cites{HoffmanSpruck74,HoffmanSpruck75} gives on each time slice
\begin{equation*}
 \left(
 \int_{M_t}w^{\frac{2n}{n-2}}\,d\mu
 \right)^{\frac{n-2}{n}}
 \leq
 C\int_{M_t}\bigl(|\nabla w|^2+H^2w^2\bigr)\,d\mu.
\end{equation*}
Applying this inequality to $w$, integrating in time, and using H\"older's
inequality together with
$|\nabla w|^2=p^2f_k^{p-2}|\nabla f|^2/4$, we obtain from
Lemma~\ref{lem:truncated-energy}
\begin{align}
 \int_0^T\!\!\int_{M_t}f_k^{pq_0}\,d\mu\,dt
 &\leq
 C\left[
 \int_0^T
 \int_{\{f(\cdot,t)>k\}}
 \bigl(H^2f_+^p+1\bigr)\,d\mu\,dt
 \right]^{q_0}.
 \label{eq:stampacchia-parabolic-estimate}
\end{align}
The weighted integrability estimate \eqref{eq:weighted-integrability} and
H\"older's inequality on spacetime then give
\begin{equation*}
 \int_0^T\!\!\int_{M_t}f_k^{pq_0}\,d\mu\,dt
 \leq
 C\,\mathcal A(k)^{q_0(1-\frac1r)}.
\end{equation*}

Now let $h>k\geq k_0$.  On the set $\{f(\cdot,t)>h\}$ one has
$f_k\geq h-k$, and hence
\begin{equation*}
 (h-k)^{pq_0}\mathcal A(h)
 \leq
 \int_0^T\!\!\int_{M_t}f_k^{pq_0}\,d\mu\,dt.
\end{equation*}
Consequently,
\begin{equation}\label{eq:stampacchia-recursion}
 (h-k)^{pq_0}\mathcal A(h)
 \leq
 C\mathcal A(k)^{1+\mu},
 \qquad h>k\geq k_0,
\end{equation}
where $ \mu
 := q_0\left(1-\frac1r\right)-1>0$  
because $r>(n+2)/2$.

The function $\mathcal A$ is nonincreasing.  The normalized time bound and
area monotonicity give
\begin{equation*}
 \mathcal A(k_0)
 \leq
 \int_0^T|M_t|\,dt
 \leq
 T|M_0|
 \leq
 C(n,\alpha).
\end{equation*}
On the initial hypersurface, $H_0^2\leq n|A_0|^2\leq n$.  Since
$f_+\leq C(n)H^\sigma$, it follows that
\begin{equation*}
 k_0
 =1+\sup_{M_0}f_+
 \leq C(n).
\end{equation*}
The Stampacchia lemma
\cite{KS80}*{Chapter~II, Lemma~B.1}, applied to
\eqref{eq:stampacchia-recursion}, therefore yields a level $k_*$ such that
\begin{equation*}
 \mathcal A(k_*)=0,
 \qquad
 k_0\leq k_*,
 \qquad
 k_*\leq C(n,\alpha,\eta).
\end{equation*}
If $f(x,t)>k_*$ at some $t>0$, continuity gives a spacetime neighborhood
of positive measure on which $f>k_*$, contradicting
$\mathcal A(k_*)=0$.  At $t=0$, the same conclusion follows from
$k_*\geq k_0>\sup_{M_0}f_+$.  Thus
\begin{equation}\label{eq:normalized-cylindrical-f-bound}
 f_{\sigma,\eta/2}
 \leq C(n,\alpha,\eta)
\end{equation}
throughout the normalized smooth flow.

We now return to the original scale.  Under the parabolic rescaling of
Subsection~\ref{subsec:space-form-normalization}, $ \widehat f_{\sigma,\eta/2}
 =R^\sigma f_{\sigma,\eta/2}$.  
Hence \eqref{eq:normalized-cylindrical-f-bound} gives
\begin{equation*}
 |A|^2
 -\left(\frac1{n-1}+\frac\eta2\right)H^2
 \leq
 C R^{-\sigma}H^{2-\sigma}.
\end{equation*}
Young's inequality, with conjugate exponents
$2/(2-\sigma)$ and $2/\sigma$, yields
\begin{equation*}
 C R^{-\sigma}H^{2-\sigma}
 \leq
 \frac\eta2 H^2+C_\eta R^{-2}.
\end{equation*}
Therefore
\begin{equation*}
 |A|^2-\frac1{n-1}H^2
 \leq
 \eta H^2+C_\eta R^{-2},
\end{equation*}
which proves \eqref{eq:cylindrical}.
\end{proof}

\begin{corollary}[Convexity estimate]\label{cor:convexity}
For every $\varepsilon>0$ there exists
$C_\varepsilon=C(n,\alpha,\varepsilon)>0$ such that
\begin{equation}\label{eq:convexity-estimate}
 \lambda_1\geq-\varepsilon H-C_\varepsilon R^{-1}
\end{equation}
at every point of the smooth flow.
\end{corollary}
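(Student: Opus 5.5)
The plan is to derive the convexity estimate purely algebraically from the cylindrical estimate of Theorem~\ref{thm:cylindrical} and the uniform two-convexity of Proposition~\ref{prop:uniform-two-convexity}. No new evolution argument is needed. The mechanism is that, at a point where $\lambda_1$ is substantially negative compared with $H$, the second fundamental form sits quantitatively away from the cylindrical set $\{|A|^2\le H^2/(n-1)\}$. The cylindrical estimate then forces $H$ to be bounded by a multiple of $R^{-1}$, and a bounded $H$ gives a lower bound on $\lambda_1$ directly.

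First I will establish the algebraic step. Normalize by $\mu_i=\lambda_i/H$, so that $\sum\mu_i=1$. Suppose $\mu_1\le-\varepsilon$. I claim that
\begin{equation*}
\sum_i\mu_i^2-\frac1{n-1}\ge\eta(\varepsilon,n)>0 .
\end{equation*}
To see this, write $\sum_i\mu_i^2\ge\mu_1^2+\frac{(1-\mu_1)^2}{n-1}$ by Cauchy--Schwarz on the remaining $n-1$ entries. Then
\begin{equation*}
\mu_1^2+\frac{(1-\mu_1)^2}{n-1}-\frac1{n-1}=\frac{n\mu_1^2-2\mu_1}{n-1}\ge\frac{n\varepsilon^2+2\varepsilon}{n-1},
\end{equation*}
since $\mu_1\le-\varepsilon<0$. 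So it suffices to take $\eta=\frac{n\varepsilon^2+2\varepsilon}{n-1}\cdot\frac12$, or indeed any smaller positive constant.

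Second, at such a point I apply \eqref{eq:cylindrical} with this $\eta$. This gives $2\eta H^2\le|A|^2-\frac1{n-1}H^2\le\eta H^2+C_\eta R^{-2}$, and therefore $H\le\sqrt{C_\eta/\eta}\,R^{-1}$. By Proposition~\ref{prop:uniform-two-convexity}, $\lambda_1\ge-H$, so $\lambda_1\ge-\sqrt{C_\eta/\eta}\,R^{-1}$. At points where $\lambda_1>-\varepsilon H$ the claimed inequality is trivial. Setting $C_\varepsilon=\sqrt{C_\eta/\eta}$, which depends only on $n,\alpha,\varepsilon$, proves \eqref{eq:convexity-estimate} under the same hypotheses as Theorem~\ref{thm:cylindrical}.

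I do not expect a genuine obstacle here. The only care needed is to make the choice of $\eta$ in terms of $\varepsilon$ explicit. One should also note that $H>0$ throughout, so the normalization is legitimate and the case analysis is exhaustive. Finally, the constants inherit independence of $\kappa$, $R$ and $T$ from Theorem~\ref{thm:cylindrical}.
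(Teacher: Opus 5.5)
Your argument is correct and follows essentially the paper's route: you use the same Cauchy--Schwarz lower bound $|A|^2-\frac1{n-1}H^2\geq\frac{n\lambda_1^2-2H\lambda_1}{n-1}$ at points where $\lambda_1<0$, and feed it into Theorem~\ref{thm:cylindrical}. The only difference is minor: the paper keeps $\frac{n}{n-1}\lambda_1^2$ on the left, takes $\eta=n\varepsilon^2/(n-1)$ and extracts square roots, so it needs neither your case split nor the bound $\lambda_1\geq-H$ from Proposition~\ref{prop:uniform-two-convexity}.
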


\begin{proof}
There is nothing to prove where $\lambda_1\geq0$.  If $\lambda_1<0$, then
Cauchy's inequality gives
\begin{align*}
 |A|^2-\frac1{n-1}H^2
 &\geq
 \lambda_1^2+\frac{(H-\lambda_1)^2}{n-1}
 -\frac1{n-1}H^2\\
 &=\frac{n\lambda_1^2-2H\lambda_1}{n-1}\\
 &\geq\frac{n}{n-1}\lambda_1^2.
\end{align*}
Apply Theorem~\ref{thm:cylindrical} with
$\eta=n\varepsilon^2/(n-1)$ and take square roots to obtain
\begin{equation*}
 -\lambda_1
 \leq
 \varepsilon H+
 \left(
 \frac{n-1}{n}
 C_{\frac{n}{n-1}\varepsilon^2}
 \right)^{1/2}R^{-1},
\end{equation*}
which proves \eqref{eq:convexity-estimate} after renaming the constant.
\end{proof}

\section{Derivative estimates and neck detection}
\label{sec:gradient-estimates}

Let $M_t$, $t\in[0,T)$, be a smooth mean curvature flow with
\begin{equation*}
 M_0\in\CC_{\tau,\kappa}(R,\alpha),
 \qquad
 |A|^2\leq R^{-2}\quad\text{on }M_0,
\end{equation*}
where $\alpha=(\alpha_0,\alpha_1,\alpha_2)$.  In this section we derive pointwise estimates for the first and higher
derivatives of the second fundamental form and then use them, together with
the cylindrical estimate, to detect necks at sufficiently large curvature.
For the derivative estimates compare \cite{HS09}*{Section~6} and the
spherical space-form analogue \cite{LN21}*{Sections~4.2--4.3}.  The neck
detection argument corresponds to \cite{HS09}*{Section~7} and
\cite{LN21}*{Section~4.4}.  The additional ambient-curvature terms in the
hyperbolic setting are of lower order and will be kept explicitly below.

Unless otherwise indicated, all constants in this section depend only on
$n$ and $\alpha$ and are uniform for
$\tau\in\mathcal I_n$, $0\leq\kappa\leq1$, $0<R\leq1$, and the maximal
smooth existence time $T$.  We shall repeatedly use $ \kappa^2\leq R^{-2}$.   
Set
\begin{equation*}
 \vartheta_n
 :=\frac12\left(
 \frac3{n+2}-\frac1{n-1}
 \right)>0.
\end{equation*}
The positivity of $\vartheta_n$ for $n\geq3$ is precisely the gap between the
Kato coefficient $3/(n+2)$ and the cylindrical coefficient $1/(n-1)$. It
provides the coercive term in the gradient estimate.

\subsection{The gradient estimate}

We first record the short-time smoothing estimates that initialize the
maximum-principle arguments.  Set $t_0=\tau_nR^2$.  The ODE comparison in the
proof of Lemma~\ref{lem:time-upper} gives
\begin{equation*}
 \sup_{M_t}|A|^2\leq C(n)R^{-2}
 \qquad\text{for }0\leq t\leq t_0.
\end{equation*}
Equations~\eqref{eq:A2-evol} and the first inequality in
\eqref{eq:derivative-evolutions} therefore imply, on $[0,t_0]$,
\begin{align*}
 (\partial_t-\Delta)|A|^2
 \leq &~-2|\nabla A|^2+CR^{-4},\\
 (\partial_t-\Delta)|\nabla A|^2
 \leq&~-2|\nabla^2A|^2+CR^{-2}|\nabla A|^2.
\end{align*}
Combining the two inequalities, we obtain
\begin{equation*}
 (\partial_t-\Delta)
 \bigl(t|\nabla A|^2+C(n)|A|^2\bigr)
 \leq C(n)R^{-4}.
\end{equation*}
At time zero the quantity in parentheses is at most $C(n)R^{-2}$, so the
maximum principle gives
$t_0\sup_{M_{t_0}}|\nabla A|^2\leq CR^{-2}$.  The same time-weighted
Bernstein induction, using the higher derivative analogues of
\eqref{eq:derivative-evolutions} and \eqref{eq:derivative-evolutions-2}, yields, for every integer $m\geq1$,
\begin{equation}\label{eq:short-time-smoothing}
 \sup_{M_{t_0}}|\nabla^mA|^2
 \leq C_m(n)R^{-2m-2}.
\end{equation}
The constants are uniform in $\kappa$ because $\kappa^2\leq R^{-2}$.

\begin{theorem}[Gradient estimate]\label{thm:gradient}
Let $M_0\in\CC_{\tau,\kappa}(R,\alpha)$ and assume that
$|A|^2\leq R^{-2}$ on $M_0$.  Then there exists
$C=C(n,\alpha)<\infty$ such that
\begin{equation}\label{eq:gradient-est}
 |\nabla A|^2
 \leq C\bigl(|A|^4+R^{-4}\bigr)
\end{equation}
on $M_t$ for every $t\in[\tau_nR^2,T)$.
\end{theorem}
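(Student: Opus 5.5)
We follow the Huisken--Sinestrari gradient estimate \cite{HS09}*{Theorem~6.1}, in the space-form form of \cite{LN21}*{Section~4.2}. First we reduce to the normalized setting \eqref{eq:normalized-space-form-setting} of Subsection~\ref{subsec:space-form-normalization}, so that $R=1$, $0\leq\kappa\leq1$, $H\geq\alpha_2$, and $T\leq\frac n2\alpha_2^{-2}$. At time $t_0=\tau_n$, the smoothing estimate \eqref{eq:short-time-smoothing} gives $|\nabla A|^2\leq C$. We then apply the maximum principle on $[t_0,T)$ to a quantity of the form
\begin{equation*}
 G=\frac{|\nabla A|^2}{g_1 g_2},
\end{equation*}
where
\begin{equation*}
 g_1:=\Bigl(\frac1{n-1}+\vartheta_n\Bigr)H^2-|A|^2+K_1,
 \qquad
 g_2:=\Bigl(\frac1{n-1}+2\vartheta_n\Bigr)H^2-|A|^2+K_2 .
\end{equation*}
The constants $K_i$ are chosen using the cylindrical estimate, Theorem~\ref{thm:cylindrical} with $\eta=\vartheta_n/2$, so that $g_i\geq\frac{\vartheta_n}{2}H^2+1$. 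Since $H\geq\alpha_2$ and $|A|^2\leq nH^2$, each $g_i$ is comparable to $H^2$ up to constants depending on $n$ and $\alpha$. Consequently a bound $G\leq C$ yields \eqref{eq:gradient-est}.

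The key computation is the evolution of $g_i$. From \eqref{eq:H2-evol} and \eqref{eq:A2-evol},
\begin{equation*}
 (\partial_t-\Delta)g_i=2|\nabla A|^2-2c_i|\nabla H|^2+\text{reaction}.
\end{equation*}
The Kato-type inequality $|\nabla H|^2\leq\frac{n+2}{3}|\nabla A|^2$, together with $c_i\leq\frac1{n-1}+2\vartheta_n<\frac3{n+2}$ (this is the role of $\vartheta_n$), gives the good term $(\partial_t-\Delta)g_i\geq\delta|\nabla A|^2-C(|A|^2+1)g_i$ with $\delta=\delta(n)>0$. Here the hyperbolic terms $n\kappa^2H^2$ and $4n\kappa^2(|A|^2-H^2/n)$ are absorbed into $C(|A|^2+1)g_i$ because $\kappa\leq1$ and $g_i\gtrsim H^2+1$. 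The reaction terms have the form $2c_iH^2(|A|^2-n\kappa^2)-2|A|^2(|A|^2-n\kappa^2)-\cdots$. To handle them we use $c_iH^2-|A|^2=g_i-K_i$, which gives a reaction of $2|A|^2g_i$ plus lower-order terms.

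Next we use \eqref{eq:derivative-evolutions},
\begin{equation*}
 (\partial_t-\Delta)|\nabla A|^2\leq-2|\nabla^2A|^2+c(|A|^2+1)|\nabla A|^2,
\end{equation*}
and the standard quotient computation for $|\nabla A|^2/(g_1g_2)$. The cross gradient terms $\langle\nabla G,\nabla(g_1g_2)\rangle$ are controlled using $|\nabla g_i|\leq CH|\nabla A|$ and Young's inequality against $|\nabla^2A|^2$. The resulting inequality at a large maximum has the form
\begin{equation*}
 (\partial_t-\Delta)G\leq C\langle\nabla G,\cdot\rangle+CG\,|A|^2-\delta\,G^2\,\frac{g_1g_2}{g_1+g_2}\cdot\frac1{H^{2}}+\cdots,
\end{equation*}
and schematically $(\partial_t-\Delta)G\leq CH^2G-\delta' H^2G^2+C$. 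Hence $G\leq C(n,\alpha)$ at a maximum, and combined with the bound at $t_0$ this gives $G\leq C$ on $[t_0,T)$. Using two factors $g_1g_2$ instead of one is exactly what produces the $G^2$ term, as in \cite{HS09}.

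Undoing the rescaling turns the normalized constant $1$ back into $R^{-4}$, which gives \eqref{eq:gradient-est}.

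The main obstacle is the bookkeeping in the quotient computation. We must verify that the good term $\delta|\nabla A|^2/g_i$ dominates all cross terms and the reaction term $c|A|^2G$, and that every $\kappa^2$ contribution can be absorbed using $\kappa^2\leq1\leq C g_i$ uniformly in $\kappa$. If a single power $g_1g_2$ does not leave enough room, we would instead use $|\nabla A|^2/(g_1g_2)+N|A|^2$ with large $N$, as in \cite{LN21}.
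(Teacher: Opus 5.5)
Your setup matches the paper's. You normalize, restart the maximum principle at $t_0=\tau_nR^2$ from the smoothing estimate, and bound $F=|\nabla A|^2/(g_1g_2)$, with $g_1,g_2$ made positive by the cylindrical estimate. One small correction: $\frac1{n-1}+2\vartheta_n=\frac3{n+2}$ exactly, not strictly less. So your $g_2$ is the paper's $g_2$, and its gradient term is only nonnegative rather than $\geq\delta|\nabla A|^2$. This is harmless, because the good term $\frac{2(n+2)}3\vartheta_n|\nabla A|^2$ coming from $g_1$ suffices.

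\textbf{The gap} is in the step you flag as the main obstacle.

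\emph{You target the wrong term.} At a spatial maximum, $\langle\nabla G,\nabla(g_1g_2)\rangle$ vanishes and needs no control. The dangerous term is $+2G\langle\nabla g_1,\nabla g_2\rangle/(g_1g_2)$, which comes from $(\partial_t-\Delta)(g_1g_2)=g_2(\partial_t-\Delta)g_1+g_1(\partial_t-\Delta)g_2-2\langle\nabla g_1,\nabla g_2\rangle$. Your schematic inequality omits it.

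\emph{Your proposed bound does not close.} Bounding this term via $|\nabla g_i|\leq CH|\nabla A|$ gives $+C^2H^2G^2$, the same order as your only good term $-\delta G^2g_2$. At high-curvature near-cylindrical points, $g_2\approx2\vartheta_nH^2$ and $\delta=\frac{2(n+2)}3\vartheta_n$. So the good coefficient is $O(\vartheta_n^2)$, about $1/60$ for $n=3$. Even the sharp pointwise bound $|\nabla g_i|\leq2|A-c_iHg|\,|\nabla A|$ leaves an order-one coefficient on the cross term.

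\emph{The fallback does not fix it.} The quantity $G+N|A|^2$, as written, is not scale-invariant and is unbounded as $t\to T$. It therefore cannot yield a uniform bound on $G$.

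\textbf{What is missing} is the exact cancellation at the maximum used in the paper. There, $\nabla|\nabla A|^2/|\nabla A|^2=\nabla g_1/g_1+\nabla g_2/g_2$. Combined with $|\nabla|\nabla A|^2|^2\leq4|\nabla A|^2|\nabla^2A|^2$, the term $-2|\nabla^2A|^2/(g_1g_2)$ absorbs every gradient contribution, including the $\langle\nabla g_1,\nabla g_2\rangle$ term. What is left is $-\frac F2\bigl|\nabla g_1/g_1-\nabla g_2/g_2\bigr|^2\leq0$. Only the reaction terms and the good term $-\delta|\nabla A|^2/g_1=-\delta Fg_2$ survive. This gives $0\leq Fg_2(C-\delta F)$, hence $F\leq C(n,\alpha)$.
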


\begin{proof}
Set $t_0=\tau_nR^2$.  By Lemma~\ref{lem:time-upper}, $T\geq2t_0$.
The case $m=1$ of \eqref{eq:short-time-smoothing} supplies the initial bound
for the argument below.

Apply Theorem~\ref{thm:cylindrical} with the fixed parameter $\vartheta_n$,
and denote the corresponding constant by $C_{\vartheta_n}$.  Enlarging it if
necessary, assume that $C_{\vartheta_n}\geq1$.  Set
\begin{align*}
 g_1&=
 \left(\frac1{n-1}+\vartheta_n\right)H^2-|A|^2
 +2C_{\vartheta_n}R^{-2},\\
 g_2&=
 \frac3{n+2}H^2-|A|^2
 +2C_{\vartheta_n}R^{-2}.
\end{align*}
Then \eqref{eq:cylindrical} implies
\begin{equation}\label{eq:g1g2-lower}
 g_1\geq C_{\vartheta_n}R^{-2},
 \qquad
 g_2=g_1+\vartheta_nH^2
 \geq\vartheta_nH^2+C_{\vartheta_n}R^{-2}.
\end{equation}

Equations~\eqref{eq:H2-evol} and \eqref{eq:A2-evol} give the exact
evolutions
\begin{align*}
 (\partial_t-\Delta)g_1
 &={}
 2\left(|\nabla A|^2
 -\left(\frac1{n-1}+\vartheta_n\right)|\nabla H|^2\right)\\
 &\quad
 +2(|A|^2-n\kappa^2)
 \left(g_1-2C_{\vartheta_n}R^{-2}\right)
 -4n\kappa^2\left(|A|^2-\frac1nH^2\right),\\
 (\partial_t-\Delta)g_2
 &={}
 2\left(|\nabla A|^2-\frac3{n+2}|\nabla H|^2\right)\\
 &\quad
 +2(|A|^2-n\kappa^2)
 \left(g_2-2C_{\vartheta_n}R^{-2}\right)
 -4n\kappa^2\left(|A|^2-\frac1nH^2\right).
\end{align*}
Here $|A|^2\geq H^2/n\geq n\kappa^2$ and
$g_i\geq C_{\vartheta_n}R^{-2}$, so
$g_i-2C_{\vartheta_n}R^{-2}\geq-g_i$.  Moreover,
\begin{equation*}
 \frac3{n+2}-\left(\frac1{n-1}+\vartheta_n\right)=\vartheta_n,
\end{equation*}
and the Kato inequality gives
$|\nabla A|^2\geq3|\nabla H|^2/(n+2)$.  We therefore obtain
\begin{align}
 (\partial_t-\Delta)g_1
 &\geq
 \frac{2\vartheta_n(n+2)}3|\nabla A|^2
 -2(|A|^2+n\kappa^2)g_1 \notag\\
 &\quad
 -4n\kappa^2
 \left(|A|^2-\frac1nH^2\right),
 \label{eq:g1-gradient}\\
 (\partial_t-\Delta)g_2
 &\geq
 -2(|A|^2+n\kappa^2)g_2
 -4n\kappa^2
 \left(|A|^2-\frac1nH^2\right).
 \label{eq:g2-gradient}
\end{align}

As in \cite{LN21}*{Theorem~4.9}, consider
\begin{equation*}
 F:=\frac{|\nabla A|^2}{g_1g_2}.
\end{equation*}
At an interior positive spatial maximum of $F$, 
\begin{equation*}
 \frac{\nabla(|\nabla A|^2)}{|\nabla A|^2}
 =\frac{\nabla g_1}{g_1}+\frac{\nabla g_2}{g_2}.
\end{equation*}
Combining \eqref{eq:g1-gradient} - \eqref{eq:g2-gradient} with  \eqref{eq:derivative-evolutions}, and using the quotient rule together with $|\nabla(|\nabla A|^2)|^2
\leq4|\nabla A|^2|\nabla^2A|^2$, we obtain
\begin{align}
 0\leq(\partial_t-\Delta)F
 &\leq
 F\Bigg[
 C(n)(|A|^2+\kappa^2)
 +4n\kappa^2
 \left(|A|^2-\frac1nH^2\right)
 \left(\frac1{g_1}+\frac1{g_2}\right)\notag\\
 &\qquad\quad 
 -\frac{2\vartheta_n(n+2)}3\frac{|\nabla A|^2}{g_1}
 \Bigg].
 \label{eq:gradient-F-maximum}
\end{align}
 The Cauchy inequality and
Proposition~\ref{prop:uniform-two-convexity} give
$0\leq|A|^2-H^2/n\leq|A|^2\leq nH^2$.  Together with
$\kappa^2\leq R^{-2}$ and \eqref{eq:g1g2-lower}, this gives
\begin{align*}
 |A|^2+\kappa^2
 &\leq Cg_2,\\
 \frac{\kappa^2(|A|^2-H^2/n)}{g_1}
 &\leq C\left(|A|^2-\frac1nH^2\right)
 \leq Cg_2,\\
 \frac{\kappa^2(|A|^2-H^2/n)}{g_2}
 &\leq C\kappa^2\leq Cg_2.
\end{align*}
Since $|\nabla A|^2/g_1=Fg_2$,
\eqref{eq:gradient-F-maximum} implies
\begin{equation*}
 0\leq Fg_2
 \left(C-\frac{2\vartheta_n(n+2)}3F\right).
\end{equation*}
Thus every interior positive maximum satisfies $F\leq C(n,\alpha)$.  The
same bound holds at $t=t_0$ by \eqref{eq:short-time-smoothing} with $m=1$
and $g_1g_2\geq C_{\vartheta_n}^2R^{-4}$.  For each $t<T$, the maximum
principle on $M\times[t_0,t]$ therefore gives $F\leq C(n,\alpha)$.  Since
$t<T$ is arbitrary, the bound holds on $M\times[t_0,T)$.

This implies  
\begin{equation*}
 |\nabla A|^2=g_1g_2F\leq C(n,\alpha)(H^4+R^{-4}).
\end{equation*}
Since $H^2\leq n|A|^2$, this proves \eqref{eq:gradient-est}.
\end{proof}

\subsection{Second and higher derivative estimates}

The gradient estimate allows us to control the lower-order terms in the
evolution equation for $\nabla^2A$.  We next derive the corresponding
pointwise estimate for the second derivatives of the second fundamental form.

\begin{theorem}\label{thm:hessian}
  Under the same assumptions on $M_0$, there exists
  $C=C(n,\alpha)<\infty$, independent of $\kappa$, such that
  \begin{equation}\label{eq:hessian-est}
    |\nabla^2A|^2
    \leq C\bigl(|A|^6+R^{-6}\bigr)
  \end{equation}
  on $M_t$ for every $t\in[\tau_nR^2,T)$.
\end{theorem}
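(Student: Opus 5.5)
We prove \eqref{eq:hessian-est} by a maximum-principle argument for a quotient, in the style of Theorem~\ref{thm:gradient}. The quotient has $|\nabla^2A|^2$ in the numerator and a suitable power of a positive curvature quantity in the denominator. A coercive term borrowed from the already established gradient bound closes the estimate. The natural choice, following \cite{HS09}*{Section~6} and \cite{LN21}*{Theorem~4.10}, is to add a multiple of the gradient quantity. Fix the functions $g_1,g_2$ from the proof of Theorem~\ref{thm:gradient}, which satisfy
\begin{equation*}
 g_1\geq C_{\vartheta_n}R^{-2},\qquad g_2\geq\vartheta_nH^2+C_{\vartheta_n}R^{-2}.
\end{equation*}
Set $\Phi:=g_2+R^{-2}$, which is comparable to $|A|^2+R^{-2}$ from above and below with constants depending only on $n,\alpha$. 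We then consider
\begin{equation*}
 G:=\frac{|\nabla^2A|^2}{\Phi^{3}}+N\frac{|\nabla A|^2}{\Phi^{2}},
\end{equation*}
with $N=N(n,\alpha)$ large, to be chosen.

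The first step is to compute $(\partial_t-\Delta)$ of each piece using \eqref{eq:derivative-evolutions}, \eqref{eq:derivative-evolutions-2}, and the lower bound \eqref{eq:g2-gradient} for $(\partial_t-\Delta)g_2$.

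In the second piece, the evolution of $|\nabla A|^2$ supplies a good term $-2N|\nabla^2A|^2/\Phi^2$. The cross terms $\langle\nabla|\nabla A|^2,\nabla\Phi\rangle/\Phi^3$ are controlled by Young's inequality. Here we use $|\nabla\Phi|\leq C|A||\nabla A|$, the gradient estimate $|\nabla A|^2\leq C\Phi^2$, and $\kappa^2\leq R^{-2}\leq\Phi$. The result is
\begin{equation*}
 (\partial_t-\Delta)\frac{|\nabla A|^2}{\Phi^2}\leq-\frac{|\nabla^2A|^2}{\Phi^2}+C\Phi.
\end{equation*}

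In the first piece, the reaction term is bounded by
\begin{equation*}
 C\frac{(|A|^2+\kappa^2)|\nabla^2A|^2+|\nabla A|^4}{\Phi^3}.
\end{equation*}
Together with the cross terms between $\nabla|\nabla^2A|^2$ and $\nabla\Phi$, this gives
\begin{equation*}
 (\partial_t-\Delta)\frac{|\nabla^2A|^2}{\Phi^3}\leq C\frac{|\nabla^2A|^2}{\Phi^2}+C\Phi.
\end{equation*}
Those cross terms are absorbed into $-|\nabla^3A|^2/\Phi^3$ and into $|\nabla^2A|^2|\nabla A|^2/\Phi^4\leq C|\nabla^2A|^2/\Phi^2$. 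The term $|\nabla A|^4/\Phi^3$ is handled by Theorem~\ref{thm:gradient}, and the $g_2$-evolution terms by \eqref{eq:g2-gradient}. Choosing $N$ larger than twice the constant $C$ then yields
\begin{equation*}
 (\partial_t-\Delta)G\leq-\frac{|\nabla^2A|^2}{\Phi^2}+C\Phi\leq -\Phi\bigl(G-CN\bigr)+C\Phi.
\end{equation*}
The second inequality uses $N|\nabla A|^2/\Phi^2\leq CN$.

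At an interior maximum where $G$ exceeds a constant $C(n,\alpha)$, the right-hand side is therefore negative. At $t_0=\tau_nR^2$, the bound $G\leq C$ follows from \eqref{eq:short-time-smoothing} with $m=1,2$ and $\Phi\geq R^{-2}$. The maximum principle on $[t_0,t]$ then gives $G\leq C$, and hence $|\nabla^2A|^2\leq C\Phi^3\leq C(|A|^6+R^{-6})$.

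The main obstacle is the bookkeeping of the Young-inequality absorptions. The powers of $\Phi$ must match exactly: the numerator of the first piece scales like curvature to the sixth power and the second like curvature to the fourth. The hyperbolic terms $\kappa^2$ and $4n\kappa^2(|A|^2-H^2/n)$ from $(\partial_t-\Delta)\Phi$ must also stay of lower order, which requires $\Phi\geq R^{-2}\geq\kappa^2$ throughout. If the $\Phi$-cross terms prove awkward, I would first try the simpler denominator $\Phi=|A|^2+R^{-2}$ with the crude bound
\begin{equation*}
 (\partial_t-\Delta)\Phi\geq-2|\nabla A|^2-C\Phi^2.
\end{equation*}
The negative gradient term there is harmless because the gradient estimate bounds it by $C\Phi^2$.
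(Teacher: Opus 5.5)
Your proposal is correct, but it closes the estimate by a different mechanism than the paper.

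The paper uses weights that are powers of $H$. It shows
\[
(\partial_t-\Delta)\frac{|\nabla^2A|^2}{H^5}\le C\frac{|\nabla^2A|^2}{H^3}+CH^3,
\qquad
(\partial_t-\Delta)\frac{|\nabla A|^2}{H^3}\le -\tfrac32\frac{|\nabla^2A|^2}{H^3}+CH^3,
\]
so every term scales like one power of curvature. It then removes the inhomogeneous $CH^3$ errors with a third term $-KH$, using the strictly positive reaction
\[
(\partial_t-\Delta)H\ge\frac{\alpha_2}{n(\alpha_2+n)}H^3,
\]
which comes from the preserved margin $H-n\kappa\ge\alpha_2R^{-1}$. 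The combination $|\nabla^2A|^2/H^5+2(C+1)|\nabla A|^2/H^3-KH$ is then a genuine subsolution. Its bound $CR^{-1}$ at $t=\tau_nR^2$ gives $|\nabla^2A|^2\le CH^6$ directly.

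Your quotient $G$ is scale-invariant instead, and its denominator satisfies $\Phi\ge R^{-2}\ge\kappa^2$ independently of $H$. The errors are therefore of size $C\Phi$. You absorb them with the coercive term $-|\nabla^2A|^2/\Phi^2\le-\Phi\,(G-CN)$, which is the gradient piece's good Hessian term acting on $G$ itself. This gives $(\partial_t-\Delta)G\le-\Phi\,(G-C)$ and hence $G\le\max\{G(\tau_nR^2),C\}$.

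The two routes trade off as follows. At this step yours uses neither the positive reaction of $H$ nor the margin $H-n\kappa$, only Theorem~\ref{thm:gradient} and $\kappa\le R^{-1}$, so it would survive without a mean-curvature margin. The paper's route gives the pure form $|\nabla^2A|^2\le CH^6$, with a barrier that is restarted after surgery simply by enlarging $K$.

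The detour through $g_2$ is unnecessary. Your fallback $\Phi=|A|^2+R^{-2}$ works equally well: the extra $-2|\nabla A|^2$ in its evolution is $O(\Phi^2)$ by the gradient estimate, and $|\nabla\Phi|^2\le C\Phi^3$ holds for either choice.
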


\begin{proof}
  This is the weighted maximum-principle argument of
  \cite{HS09}*{Theorem~6.3} and \cite{LN21}*{Theorem~4.12}, with the
  hyperbolic reaction retained below.  The preserved lower bound for the mean
  curvature gives
  \begin{equation}\label{s4.H}
  H\geq n\kappa+\alpha_2R^{-1},
  \qquad
  R^{-1}\leq\alpha_2^{-1}H,
  \qquad
  \kappa\leq n^{-1}H
  \end{equation}
  on $M\times[\tau_nR^2,T)$.  The case $m=2$ of
  \eqref{eq:short-time-smoothing} supplies the initial Hessian bound.
  By Proposition~\ref{prop:uniform-two-convexity} and
  Theorem~\ref{thm:gradient},
  \begin{equation*}
  |A|^2+\kappa^2\leq C(n)H^2,
  \qquad
  |\nabla A|^2\leq C(n,\alpha)H^4,
  \qquad
  |\nabla H|^2\leq C(n,\alpha)H^4.
  \end{equation*}
 Fix
  $C=C(n,\alpha)\geq1$ large enough for all estimates below.

  Using \eqref{eq:derivative-evolutions-2}, the quotient rule, and
  $(\partial_t-\Delta)H\geq0$, we obtain
  \begin{align*}
    (\partial_t-\Delta)\frac{|\nabla^2A|^2}{H^5}
    &\leq
    -2\frac{|\nabla^3A|^2}{H^5}
    +C\frac{|\nabla^2A|^2}{H^3}
    +CH^3\\
    &\quad
    +10H^{-6}
    \langle\nabla(|\nabla^2A|^2),\nabla H\rangle.
  \end{align*}
  The last term is estimated by
  \begin{align*}
    10H^{-6}
    \bigl|\langle\nabla(|\nabla^2A|^2),\nabla H\rangle\bigr|
    &\leq
    20H^{-6}|\nabla^2A||\nabla^3A||\nabla H|\\
    &\leq
    \frac12\frac{|\nabla^3A|^2}{H^5}
    +C\frac{|\nabla^2A|^2}{H^3}.
  \end{align*}
  Hence
  \begin{equation}\label{eq:hessian-weighted-evolution}
    (\partial_t-\Delta)\frac{|\nabla^2A|^2}{H^5}
    \leq
    -\frac{|\nabla^3A|^2}{H^5}
    +C\frac{|\nabla^2A|^2}{H^3}
    +CH^3.
  \end{equation}

  In the same way, using the evolution inequality \eqref{eq:derivative-evolutions} for $|\nabla A|^2$ and the
  gradient estimate above, we find
  \begin{align}\label{eq:gradient-weighted-evolution}
    (\partial_t-\Delta)\frac{|\nabla A|^2}{H^3}
    \leq&
    -2\frac{|\nabla^2A|^2}{H^3}
    +CH^3
    +6H^{-4}
    \langle\nabla(|\nabla A|^2),\nabla H\rangle\nonumber\\
     \leq &
    -\frac32\frac{|\nabla^2A|^2}{H^3}
    +CH^3.
  \end{align}

  The estimate \eqref{s4.H} and $\kappa\leq R^{-1}$ implies
   \begin{equation*}
  n\kappa
  \leq nR^{-1}
  \leq\frac n{\alpha_2}(H-n\kappa)
  \end{equation*}
  and so 
  \begin{equation*}
  H-n\kappa
  \geq\frac{\alpha_2}{\alpha_2+n}H. 
  \end{equation*}
  Then the evolution of $H$ satisfies 
  \begin{align}\label{eq:H-positive-reaction}
    (\partial_t-\Delta)H
    =&~(|A|^2-n\kappa^2)H\nonumber\\
    \geq &~  
    \frac{(H-n\kappa)(H+n\kappa)}{n}\,H  \geq\frac{\alpha_2}{n(\alpha_2+n)}H^3.
  \end{align}

  The quotient $|\nabla^2A|^2/H^5$ has the correct scale, but its evolution \eqref{eq:hessian-weighted-evolution}
  leaves two positive errors of the form
  $C|\nabla^2A|^2/H^3$ and $CH^3$.  We observe that a positive multiple of the evolution \eqref{eq:gradient-weighted-evolution} of  $|\nabla A|^2/H^3$ supplies a negative
  Hessian term that absorbs the first error, while a negative multiple of the evolution \eqref{eq:H-positive-reaction} of $H$
  uses the positive reaction of $H$ to absorb the second. 
  
  To make the cancellation explicit, multiply
  \eqref{eq:gradient-weighted-evolution} by $2(C+1)$, subtract
  $2n(\alpha_2+n)(C+1)^2/\alpha_2$ times
  \eqref{eq:H-positive-reaction}, and add the result to
  \eqref{eq:hessian-weighted-evolution}.  This gives
  \begin{align*}
  (\partial_t-\Delta)\Bigg[
  &\frac{|\nabla^2A|^2}{H^5}
  +2(C+1)\frac{|\nabla A|^2}{H^3} -\frac{2n(\alpha_2+n)(C+1)^2}{\alpha_2}H
  \Bigg]\notag\\
  &\leq
  -\frac{|\nabla^3A|^2}{H^5}
  +\bigl(C-3(C+1)\bigr)\frac{|\nabla^2A|^2}{H^3}\notag\\
  &\qquad
  +\bigl(C+2C(C+1)-2(C+1)^2\bigr)H^3\\
  &\leq
  -\frac{|\nabla^3A|^2}{H^5}
  -\frac{|\nabla^2A|^2}{H^3}
  \leq0.
  \end{align*}
  At $t=\tau_nR^2$, \eqref{eq:short-time-smoothing} with $m=1,2$ and
  $H\geq\alpha_2R^{-1}$ show that the quantity in square brackets is at
  most $C(n,\alpha)R^{-1}$.
  Applying the maximum principle on $M\times[\tau_nR^2,t]$ for each $t<T$
  gives the same bound throughout $M\times[\tau_nR^2,T)$.  Dropping the nonnegative
  middle term in that quantity, we obtain
  \begin{equation*}
  \frac{|\nabla^2A|^2}{H^5}
  \leq
  \frac{2n(\alpha_2+n)(C+1)^2}{\alpha_2}H
  +C(n,\alpha)R^{-1}
  \leq C(n,\alpha)H.
  \end{equation*}
  Thus
  \begin{equation*}
  |\nabla^2A|^2\leq C(n,\alpha)H^6.
  \end{equation*}
  Since $H^2\leq n|A|^2$, this implies
  \eqref{eq:hessian-est}.
\end{proof}

Higher spatial derivatives follow from the standard Bernstein induction, and
mixed time derivatives follow from the evolution equation for the second
fundamental form.  This is the argument of Huisken--Sinestrari
\cite{HS09}*{Theorem~6.3 and Corollary~6.4}.  Compare also
\cite{LN21}*{Theorem~4.15}.  The additional hyperbolic commutator terms are
lower order because the ambient curvature is parallel, and are controlled by
$\kappa^2\leq n^{-2}H^2$.

\begin{corollary}[Higher derivatives]\label{cor:higher-derivatives}
  For every pair of nonnegative integers $a,b$, there exists
  $C_{a,b}=C_{a,b}(n,\alpha)<\infty$ such that
  \begin{equation}\label{eq:higher-derivative-estimate}
    |\nabla_t^a\nabla^bA|^2
    \leq
    C_{a,b}\left(
    H^{4a+2b+2}+R^{-4a-2b-2}
    \right)
  \end{equation}
  on $M_t$ for every $t\in[\tau_nR^2,T)$.  Here $\nabla_t$ denotes the
  covariant time derivative associated with the evolving metric.
\end{corollary}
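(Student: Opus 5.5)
The plan is an induction on $b$ for purely spatial derivatives, followed by a reduction of time derivatives to spatial ones. Throughout, the lower bounds
\begin{equation*}
 H\geq n\kappa+\alpha_2R^{-1},\qquad R^{-1}\leq\alpha_2^{-1}H,\qquad \kappa\leq H/n
\end{equation*}
from \eqref{s4.H} on $[\tau_nR^2,T)$ are used to write every target bound as $|\nabla^bA|^2\leq C_bH^{2b+2}$. This form is equivalent to \eqref{eq:higher-derivative-estimate}, since $H^2\leq n|A|^2$ and $R^{-1}\leq CH$. The cases $b=0,1,2$ are Proposition~\ref{prop:uniform-two-convexity}, Theorem~\ref{thm:gradient} and Theorem~\ref{thm:hessian}.

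For the inductive step, assume $|\nabla^jA|^2\leq C_jH^{2j+2}$ for $j\leq b-1$ with $b\geq3$. First record the standard evolution inequality
\begin{equation*}
 (\partial_t-\Delta)|\nabla^bA|^2\leq-2|\nabla^{b+1}A|^2+C\sum_{i+j+k=b}|\nabla^iA||\nabla^jA||\nabla^kA||\nabla^bA|+C\kappa^2\sum_{j\leq b}|\nabla^jA||\nabla^bA|.
\end{equation*}
The $\kappa^2$ terms involve no derivatives of the ambient curvature, because $\bar R$ is parallel in a space form. By the inductive hypothesis and $\kappa\leq H/n$, the right side is bounded by $-2|\nabla^{b+1}A|^2+CH^2|\nabla^bA|^2+CH^{2b+4}$. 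Then mimic the proof of Theorem~\ref{thm:hessian} with the test function
\begin{equation*}
 \Phi=\frac{|\nabla^bA|^2}{H^{2b+1}}+N\frac{|\nabla^{b-1}A|^2}{H^{2b-1}}-N'H.
\end{equation*}
Here the quotient gradient term is absorbed into $|\nabla^{b+1}A|^2/H^{2b+1}$ using $|\nabla H|\leq CH^2$. The negative term $-|\nabla^bA|^2/H^{2b-1}$ coming from the second summand absorbs the error $C|\nabla^bA|^2/H^{2b-1}$, and the reaction $(\partial_t-\Delta)H\geq cH^3$ from \eqref{eq:H-positive-reaction} absorbs the $CH^3$ errors. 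With $N$ and then $N'$ large, this gives $(\partial_t-\Delta)\Phi\leq0$. The initial bound at $t_0=\tau_nR^2$ comes from \eqref{eq:short-time-smoothing}, which gives $\Phi(t_0)\leq CR^{-1}$. The maximum principle then yields $|\nabla^bA|^2\leq CH^{2b+2}$.

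For time derivatives, use \eqref{eq:nabla-t-h}, which says $\nabla_tA=\Delta A+\mathcal Q_\kappa(A)$. Since $\nabla_t$ is metric-compatible, $\nabla_t\nabla^bA$ equals $\nabla^b\nabla_tA$ plus commutator terms. These terms are polynomial in $A$, its spatial derivatives and $\kappa^2$, and $\partial_t\Gamma$ is expressed through $\nabla(HA)$. Inducting on $a$, each $\nabla_t$ is therefore replaced by two spatial derivatives plus products of total weight consistent with scaling, where $A$ has weight $1$, $\nabla$ weight $1$, $\nabla_t$ weight $2$, and $\kappa$ weight $1$. Applying the spatial bounds term by term gives $|\nabla_t^a\nabla^bA|^2\leq CH^{4a+2b+2}$.

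The main obstacle is bookkeeping rather than new ideas. Two points need care: checking that every $\kappa$-dependent term has the correct scaling weight, so that it can be traded for powers of $H$ via $\kappa\leq H/n$, and choosing the constants $N,N'$ in the inductive test function in the right order.
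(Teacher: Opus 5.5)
Your proposal is correct and follows the same route as the paper. The paper's proof only invokes the standard Bernstein induction of Huisken--Sinestrari for the spatial derivatives and recovers mixed time derivatives from the evolution equation \eqref{eq:nabla-t-h}. Your weighted barrier $\Phi$ spells out that induction: it generalizes the one in Theorem~\ref{thm:hessian} and is initialized by \eqref{eq:short-time-smoothing}. Your treatment of the ambient terms is exactly the paper's remark: no derivatives of $\bar R$ appear because it is parallel, and $\kappa\le H/n$ converts the remaining terms into powers of $H$.
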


For a smooth flow, if $p=X(x,t)$ for $x\in M$, let
\begin{equation*}
    p_s:=X(x,s), \qquad s\leq t,
\end{equation*}
be the material track of $p$, and set
\begin{equation*}
 \mathcal P(p,t,\rho,\theta)
 =\bigcup_{s\in[t-\theta,t]}
 B_{g(s)}(p_s,\rho)\times\{s\}.
\end{equation*}
We use this notation only
when $t-\theta\geq0$ and the indicated balls are contained in the smooth
flow.

\begin{corollary}
\label{cor:parabolic-control}
There are constants
\begin{equation*}
 h_\#=h_\#(n,\alpha)<\infty,
 \qquad c_\#=c_\#(n,\alpha)<\infty
\end{equation*}
such that, whenever $H(p,t)\geq h_\#R^{-1}$,
\begin{equation}\label{eq:H-derivative-ratios}
 |\nabla H|(p,t)\leq c_\#H(p,t)^2,
 \qquad
 |\partial_tH|(p,t)\leq\frac12c_\#^2H(p,t)^3.
\end{equation}
If the cylinder
\begin{equation*}
 \mathcal P\left(
 p,t,\frac1{10c_\#H(p,t)},
       \frac1{100c_\#^2H(p,t)^2}\right)
\end{equation*}
is defined, then
\begin{equation}\label{eq:parabolic-H-comparison}
 \frac1{10}H(p,t)\leq H(q,s)\leq10H(p,t)
\end{equation}
throughout this cylinder.
\end{corollary}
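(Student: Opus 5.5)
The plan is to derive both statements from Corollary~\ref{cor:higher-derivatives} with $(a,b)=(0,1)$ and $(1,0)$, and then integrate the resulting differential inequalities along the cylinder. First take $h_\#\geq1$. When $H\geq h_\#R^{-1}$ we have $R^{-1}\leq H$. Corollary~\ref{cor:higher-derivatives} with $(a,b)=(0,1)$ gives $|\nabla A|^2\leq C_{0,1}(H^4+R^{-4})\leq 2C_{0,1}H^4$. Since $|\nabla H|\leq\sqrt n|\nabla A|$, this yields $|\nabla H|\leq c_1H^2$. For the time derivative, use \eqref{eq:H-evol}: $\partial_tH=\Delta H+(|A|^2-n\kappa^2)H$. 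The Laplacian term is bounded by $\sqrt n|\nabla^2A|\leq CH^3$ via Theorem~\ref{thm:hessian}. The reaction term is bounded by $nH^3$ via Proposition~\ref{prop:uniform-two-convexity}. Hence $|\partial_tH|\leq c_2H^3$. Choose $c_\#\geq\max\{c_1,\sqrt{2c_2}\}$ so that both inequalities in \eqref{eq:H-derivative-ratios} hold. One caveat is that these estimates hold only for $t\geq\tau_nR^2$. For earlier times, the bound $\sup|A|^2\leq C(n)R^{-2}$ on $[0,t_0]$ shows that $H\geq h_\#R^{-1}$ cannot occur once $h_\#^2>nC(n)$. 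So I would enlarge $h_\#$ accordingly.

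For \eqref{eq:parabolic-H-comparison}, I would run a continuity argument. Set $H_0=H(p,t)$, $\rho=1/(10c_\#H_0)$ and $\theta=1/(100c_\#^2H_0^2)$. The difficulty is that the derivative bounds hold only where $H\geq h_\#R^{-1}$, so I first work with the region where $H$ stays in $[H_0/10,10H_0]$. To keep the threshold valid on that region, I would apply the bounds with $h_\#$ replaced by $10h_\#$ in the hypothesis, meaning I assume $H_0\geq10h_\#R^{-1}$ from the start. Alternatively, since $H\geq\alpha_2R^{-1}$ everywhere, one can note that the derivative estimates of Corollary~\ref{cor:higher-derivatives} hold with $R^{-1}\leq\alpha_2^{-1}H$ wherever $t\geq\tau_nR^2$. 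This gives $|\nabla H|\leq c_\#H^2$ globally after that time, with $c_\#$ depending on $\alpha_2$. The second route is cleaner, and I will use it, keeping $h_\#$ only to exclude early times.

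The spatial step goes as follows. Along a unit-speed $g(s)$-geodesic from $p_s$, the function $\phi=1/H$ satisfies $|\nabla\phi|=|\nabla H|/H^2\leq c_\#$. Hence $|1/H(q,s)-1/H(p_s,s)|\leq c_\#\rho=1/(10H_0)$. The temporal step uses the material track: $|\partial_s(H^{-2})|=2|\partial_sH|/H^3\leq c_\#^2$. So over a time length $\theta$, $H^{-2}$ changes by at most $c_\#^2\theta=1/(100H_0^2)$. Combining the two, $H(p_s,s)^{-2}\in[0.99,1.01]H_0^{-2}$, which gives $H(p_s,s)\in[0.99H_0,1.01H_0]$. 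The spatial step then gives $1/H(q,s)\in[(0.99-0.1)/H_0,(1.01+0.1)/H_0]$. That is, $H(q,s)\in[H_0/1.2,\,H_0/0.89]$, which lies well inside $[H_0/10,10H_0]$.

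The main obstacle is verifying that the whole cylinder lies in $t\geq\tau_nR^2$, i.e. $t-\theta\geq\tau_nR^2$. This follows because $\theta\leq R^2/(100c_\#^2h_\#^2)$ and because $H(p,t)\geq h_\#R^{-1}$ forces $t\geq t_0$ with some margin, namely $t\geq 2t_0$. The margin comes from the comparison bound $\sup|A|^2\leq C(n)R^{-2}$, which in fact holds on $[0,2t_0]$ after adjusting constants, by enlarging $h_\#$.
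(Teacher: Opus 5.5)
Your argument is essentially correct, but it handles the threshold differently from the paper. The paper does not use the class bound $H\geq\alpha_2R^{-1}$ here. It picks $h_\#$ so that $H\leq h_\#R^{-1}/20$ on $[0,\tau_nR^2]$, and notes that the derivative bounds, with powers of $R^{-1}$ absorbed, hold wherever $H\geq h_\#R^{-1}/10$; such points automatically lie after $\tau_nR^2$. It then runs one first-failure argument along the material track and the geodesic. As long as $H\geq h_\#R^{-1}/10$, the integration gives $\tfrac12H(p,t)\leq H\leq2H(p,t)$, so the threshold is never lost, and the same bound shows the backward track cannot reach time $\tau_nR^2$.

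You instead absorb $R^{-1}$ via $R^{-1}\leq\alpha_2^{-1}H$. This makes both derivative bounds hold at every point with $t\geq\tau_nR^2$, so your spatial and temporal integrations need no bootstrap, and your numerics there are fine. The cost is that you must separately show the whole cylinder lies in $\{t\geq\tau_nR^2\}$. The paper's version is more local, needing derivative control only where $H$ dominates $R^{-1}$. Yours is more direct, given the preserved lower bound on $H$.

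The step that fails as written is your time margin. The comparison solution of $v'=2v^2+2nv$, $v(0)=R^{-2}$, blows up at $\frac1{2n}\log(1+nR^2)$. This equals $2\tau_nR^2=2t_0$ exactly when $R=1$. So the comparison gives no bound $\sup|A|^2\leq C(n)R^{-2}$ on $[0,2t_0]$, and enlarging $h_\#$ does not change this. There are two easy repairs:
\begin{enumerate}
\item Work on $[0,\tfrac32t_0]$, where the same comparison does give $C(n)R^{-2}$ because $(1+n)^{R^2}\leq1+nR^2$. Then take $h_\#$ large enough that $\theta\leq R^2/(100c_\#^2h_\#^2)\leq\tfrac12t_0$.
\item Argue as the paper does. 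If $t-\theta<t_0$, integrate $|\partial_s(H^{-2})|\leq c_\#^2$ along the track back only to $t_0$. This gives $H(p_{t_0},t_0)\geq0.99\,h_\#R^{-1}$, which contradicts the short-time bound once $h_\#$ is large.
\end{enumerate}
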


\begin{proof}
The ODE estimate used in Lemma~\ref{lem:time-upper} bounds
$H\leq C(n)R^{-1}$ on $[0,\tau_nR^2]$.  Choose the constant $h_\#$ in the
statement so large that this short-time bound is at most
$h_\#R^{-1}/20$.  Wherever $H\geq h_\#R^{-1}/10$, the point lies in the
time range of Theorems~\ref{thm:gradient} and~\ref{thm:hessian}.  Since
\begin{equation*}
 |\nabla H|^2\leq n|\nabla A|^2,
 \qquad
 \partial_tH=\Delta H+(|A|^2-n\kappa^2)H,
\end{equation*}
those estimates give
\begin{equation*}
 |\nabla H|\leq c_\#H^2,
 \qquad
 |\partial_tH|\leq\frac12c_\#^2H^3
\end{equation*}
throughout this larger region, after $c_\#$ is chosen sufficiently large,
depending only on $n,\alpha$.  This proves
\eqref{eq:H-derivative-ratios} at every point in the statement.
Indeed, $|\Delta H|\leq n|\nabla^2A|$, and every power of $R^{-1}$ is
absorbed by $H\geq h_\#R^{-1}/10$.

Throughout the region $H\geq h_\#R^{-1}/10$,
\begin{equation}\label{eq:reciprocal-H-bounds}
 |\nabla(H^{-1})|\leq c_\#,
 \qquad
 |\partial_t(H^{-2})|\leq c_\#^2.
\end{equation}
Fix $(q,s)$ in the indicated parabolic cylinder.  We first integrate the second inequality in
\eqref{eq:reciprocal-H-bounds} backward from $(p,t)$ to $(p_s,s)$
along the material track, and then integrate the first inequality
along a minimizing $g(s)$-geodesic from $p_s$ to $q$.  As long as
$H\geq h_\#R^{-1}/10$ along these paths, the chosen time and space
radii give
\begin{equation*}
    \frac12 H(p,t)\leq H(q,s)\leq2H(p,t).
\end{equation*}
Since $H(p,t)\geq h_\#R^{-1}$, the lower bound is strictly larger than
$h_\#R^{-1}/10$.  A first-failure argument therefore shows that the
required lower-curvature threshold cannot be lost along either path. The backward path cannot reach the time $\tau_nR^2$ either, since the
short-time bound there is at most $h_\#R^{-1}/20$.  Thus the preceding
integrations are valid throughout the cylinder and yield
\eqref{eq:parabolic-H-comparison}. This is the first-failure
argument of \cite{HS09}*{Lemmas~6.6 and~7.2}.  See also
\cite{LN21}*{Lemmas~4.11 and~4.14}.
\end{proof}

\subsection{Neck detection}

We now combine the cylindrical and derivative estimates to identify
high-curvature regions at which the smallest principal curvature is small.
The relevant scale at a point $(p,t)$ is $(n-1)/{H(p,t)}$, the radius of a round cylinder having the same mean curvature.

\begin{definition}[Necks]
  A point $p\in M$ is the center of an $(\eps,k,L)$-neck of radius $r$ if
  there exist an embedding
  \begin{equation*}
  N:\Sph^{n-1}\times(-L,L)\longrightarrow M
  \end{equation*}
  and a point $\omega_0\in\Sph^{n-1}$ such that $N(\omega_0,0)=p$ and,
  after pulling $X\circ N$ back by $\exp_{X(p)}^{-1}$, rescaling lengths by
  $r^{-1}$, and applying a Euclidean rigid motion in
  $T_{X(p)}\Hh^{n+1}_{\kappa}$, the resulting immersion is
  $\eps$-close in $C^k$ to the standard round cylinder
  \begin{equation*}
  \Sph^{n-1}(1)\times(-L,L)\subset\mathbb R^{n+1}.
  \end{equation*}
\end{definition}

\begin{theorem}[Neck detection]\label{thm:neck-detection}
  Fix $\eps>0$, $k\in\mathbb N$, $L\geq10$, and $\Theta>0$.  There exist
  \begin{equation*}
  \eta_{\mathrm{neck}}
  =\eta_{\mathrm{neck}}(n,\alpha,\eps,k,L,\Theta)>0
  \end{equation*}
  and
  \begin{equation*}
  H_{\mathrm{neck}}
  =H_{\mathrm{neck}}(n,\alpha,\eps,k,L,\Theta)<\infty
  \end{equation*}
  with the following property.

  Let $(p,t)$ be a point of a smooth flow satisfying the hypotheses of this
  section, and set
  \begin{equation*}
  r=\frac{n-1}{H(p,t)}.
  \end{equation*}
  Assume that $ \mathcal P\bigl(p,t,2(L+2)r,\Theta r^2\bigr)$  
  is defined.  If
  \begin{equation*}
  H(p,t)\geq H_{\mathrm{neck}}R^{-1},
  \qquad
  \lambda_1(p,t)\leq\eta_{\mathrm{neck}}H(p,t),
  \end{equation*}
  then $p$ is the center of an $(\eps,k,L)$-neck of radius $r$ at time $t$.
  If $\Theta\geq L^2$, then after rescaling by $r^{-1}$ the flow on the
  corresponding backward cylinder over $[-L^2,0]$ is $\eps$-close in
  parabolic $C^k$ norm to a round shrinking cylinder whose radius at time
  zero is one.
\end{theorem}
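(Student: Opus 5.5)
The plan is a contradiction and compactness argument in the style of \cite{HS09}*{Section~7} and \cite{LN21}*{Section~4.4}. Fix $\eps,k,L,\Theta$ and suppose the statement fails. Then there are flows satisfying the hypotheses of this section, with parameters $\kappa_j\in[0,1]$ and $R_j\in(0,1]$, and points $(p_j,t_j)$. These satisfy $H_j:=H(p_j,t_j)\geq jR_j^{-1}$ and $\lambda_1(p_j,t_j)\leq j^{-1}H_j$, and the cylinders $\mathcal P(p_j,t_j,2(L+2)r_j,\Theta r_j^2)$ are defined. However, none of the $p_j$ is the center of an $(\eps,k,L)$-neck of radius $r_j=(n-1)/H_j$.

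We rescale each flow by $r_j^{-1}$ about $(X(p_j),t_j)$, pulling back through $\exp_{X(p_j)}^{-1}$ and translating time so that $t_j\mapsto0$. After rescaling, the ambient curvature is $-\kappa_j^2r_j^2$, and $\kappa_j r_j\leq r_j\leq (n-1)R_j/(jR_j)\to0$. Hence the ambient metrics converge smoothly to the Euclidean metric on balls of any fixed radius. The rescaled mean curvature at the base point equals $n-1$. Since $H_j\geq h_\# R_j^{-1}$ for large $j$, Corollary~\ref{cor:parabolic-control} gives $\tfrac{n-1}{10}\leq H\leq 10(n-1)$ on a parabolic cylinder of fixed size.

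To extend this two-sided bound to the whole rescaled cylinder of spatial radius $2(L+2)$ and time length $\Theta$, we iterate Corollary~\ref{cor:parabolic-control} over a chain of overlapping subcylinders. Each step multiplies the bound by at most $10$. The number of steps depends only on $L,\Theta,c_\#$, so we obtain $c\leq H\leq C$ with $c,C$ depending on $(n,\alpha,L,\Theta)$. We also need $H\geq h_\#R_j^{-1}$ at each new center, which holds for $j$ large because the chain lower bound times $H_j$ still exceeds $h_\#R_j^{-1}$.

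Corollary~\ref{cor:higher-derivatives}, together with $R_j^{-1}\leq H_j/j$, then yields uniform bounds on $|\nabla^b A|$ and its time derivatives on the rescaled cylinder. Graphical representation over tangent planes gives uniform local $C^\infty$ bounds. By Arzel\`a--Ascoli, a subsequence converges in $C^\infty_{\mathrm{loc}}$ to a smooth mean curvature flow $\bar M_s$ in $\mathbb R^{n+1}$ on the limiting cylinder. The limit has the following properties.
\begin{itemize}
\item Applying Theorem~\ref{thm:cylindrical} with every $\eta>0$ and noting $C_\eta R_j^{-2}r_j^2\to0$ gives $|A|^2\leq\frac1{n-1}H^2$ on the limit.
\item Proposition~\ref{prop:uniform-two-convexity} passes to the limit as $\lambda_1+\lambda_2\geq\beta H>0$.
\item At the base point, $\lambda_1=0$ and $H=n-1$.
\end{itemize}

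The main obstacle is the rigidity step that identifies the limit as a cylinder. When $\lambda_1\geq0$, the Cauchy computation in Corollary~\ref{cor:convexity} shows that $|A|^2-\frac{1}{n-1}H^2\geq0$ wherever $\lambda_1=0$. Combined with the cylindrical bound, this makes the quantity $\frac1{n-1}H^2-|A|^2$ nonnegative, and the evolution inequality (Euclidean version, $\kappa=0$) makes it a supersolution. It vanishes at an interior spacetime point, namely the base point at time $0$. By the strong maximum principle, it vanishes identically on the backward parabolic neighbourhood, and hence on the connected component by analytic continuation along the cylinder.

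The equality case of the argument in \cite{HS09}*{Lemma~7.4} (or \cite{Hui84}'s rigidity) then forces $\nabla A=0$, with eigenvalues $0$ and $\frac{H}{n-1}$ of multiplicity $n-1$. Thus the limit is locally $\Sph^{n-1}(\rho(s))\times\mathbb R$ with $\rho(0)=1$, that is, a round shrinking cylinder.

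Convexity $\lambda_1\geq0$ in the limit follows from Corollary~\ref{cor:convexity}, again since $C_\eps R_j^{-1}r_j\to0$. The extent of the neck uses the spatial radius $2(L+2)$ to contain $\Sph^{n-1}\times(-L,L)$. Finally, smooth convergence shows that for $j$ large the rescaled slices are $\eps$-close in $C^k$ to the cylinder, with the embedding $N$ obtained by normal graph over it. This contradicts the choice of $p_j$. When $\Theta\geq L^2$, the same convergence on $[-L^2,0]$ gives the parabolic statement.
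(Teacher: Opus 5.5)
\textbf{There is a genuine gap in your extension step.} You claim that chaining Corollary~\ref{cor:parabolic-control} over overlapping subcylinders covers the whole rescaled region of spatial radius $2(L+2)$ and time length $\Theta$ in a number of steps depending only on $L,\Theta,c_\#$. This fails.

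After $k$ steps, the only upper bound available at the new center is $H\leq10^k(n-1)$. The next subcylinder is therefore only guaranteed spatial radius $1/(10^{k+1}c_\#(n-1))$ and time length $1/(100^{k+1}c_\#^2(n-1)^2)$. These are convergent geometric series. So the region your chain is guaranteed to control has size at most about $1/(9c_\#(n-1))$, independent of $L$, and for large $L$ it cannot cover radius $2(L+2)$.

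This is not a bookkeeping issue. The bound $|\nabla(H^{-1})|\leq c_\#$ propagates lower bounds on $H$ to any distance. It gives upper bounds only within distance about $1/(c_\#H)$; a profile like $H\sim(a-c_\#x)^{-1}$ is consistent with it. Without an upper bound on $H$ you also lose the derivative bounds of Corollary~\ref{cor:higher-derivatives}, and with them compactness on the large region. For the same reason, your continuation of the rigidity along the cylinder has nothing to act on: the limit exists only where the bounds hold.

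\textbf{The missing idea} is that the upper bound must be propagated through the rigidity of the limit. This is what the paper does.

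First take the limit on the fixed small cylinder supplied by Corollary~\ref{cor:parabolic-control}, and identify it there as the round shrinking cylinder. Your scalar strong maximum principle for $Q=\frac1{n-1}H^2-|A|^2$ is fine at this stage. It is a legitimate alternative to the paper's tensor maximum principle and local splitting. It uses
\begin{equation*}
(\partial_t-\Delta)Q=2\bigl(|\nabla A|^2-\tfrac1{n-1}|\nabla H|^2\bigr)+2|A|^2Q
\end{equation*}
together with the strict Kato gap $\frac3{n+2}>\frac1{n-1}$, which forces $\nabla A=0$ once $Q\equiv0$.

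On the limiting cylinder, $H\leq n-1$ for $s\leq0$, and $H\geq(n-1)/\sqrt{1+2(n-1)\Theta}$. Smooth convergence then makes $H$ close to these values near the edge of the controlled region for large $j$. The upper bound means that Corollary~\ref{cor:parabolic-control}, applied at points there, controls cylinders of a definite size depending only on $n$ and $\alpha$. The lower bound, together with $H_jR_j\geq j$, guarantees $H\geq h_\#R_j^{-1}$ at these new centers.

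You then repeat the compactness and rigidity argument on the enlarged region. Finitely many repetitions, followed by a diagonal subsequence, reach $\mathcal P(0,0,2(L+2),\Theta)$.

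With this repair, the rest of your outline agrees with the paper's proof: the contradiction setup, the passage of the convexity and cylindrical estimates to the limit, and the neck parametrization.
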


\begin{proof}
The Euclidean blow-up and continuation argument is
  \cite{HS09}*{Lemma~7.4}.  Its space-form version is
  \cite{LN21}*{Lemma~4.16}.  We record the two points needed for the
  hyperbolic ambient space and for the full buffer in the statement.  Suppose
  that the conclusion fails.  Then there are smooth
  flows with scales $R_j$ and points $(p_j,t_j)$ such that, with
  $r_j=(n-1)/H(p_j,t_j)$,
  \begin{equation*}
  H(p_j,t_j)R_j\geq j,
  \qquad
  \frac{\lambda_1(p_j,t_j)}{H(p_j,t_j)}\leq\frac1j,
  \end{equation*}
  while the conclusion of the theorem fails at $(p_j,t_j)$.

  Pull the ambient metric back by the exponential map at $X(p_j,t_j)$ and
  parabolically rescale by $r_j^{-1}$.  At the base point the rescaled mean
  curvature is $n-1$, and the ratio of the smallest principal curvature to
  the mean curvature tends to zero.  The rescaled ambient sectional curvature
  is $-(\kappa_jr_j)^2$, where
  \begin{equation*}
    0\leq\kappa_jr_j
    \leq\frac{r_j}{R_j}
    =\frac{n-1}{H(p_j,t_j)R_j}
    \longrightarrow0.
  \end{equation*}
  Thus the rescaled ambient metrics converge smoothly on compact sets to the
  Euclidean metric.

  Corollary~\ref{cor:parabolic-control}, Theorems~\ref{thm:gradient}
  and~\ref{thm:hessian}, and Corollary~\ref{cor:higher-derivatives} give
  smooth subsequential convergence on a fixed backward parabolic neighborhood
  to a Euclidean mean curvature flow. Under the rescaling, the lower-order terms in the convexity and cylindrical estimates are multiplied respectively
by $r_j/R_j$ and $(r_j/R_j)^2$, which tend to zero.  Thus, after first
passing to the limit and then letting the parameters in those
estimates tend to zero, we obtain
  \begin{equation*}
    A_\infty\geq0,
    \qquad H_\infty(0,0)=n-1,
    \qquad \lambda_{1,\infty}(0,0)=0,
    \qquad |A_\infty|^2\leq\frac1{n-1}H_\infty^2.
  \end{equation*}

  In the Euclidean limit,
  \begin{equation*}
  (\partial_s-\Delta)h_i{}^j=|A|^2h_i{}^j.
  \end{equation*}
  Apply Hamilton's tensor strong maximum principle \cite{Ham86} from any
  fixed negative time slice in the limiting neighborhood.  The null
  distribution is parallel on a smaller backward neighborhood, so the limit
  splits locally off a line.  Cauchy's inequality for the remaining
  nonnegative principal curvatures gives the reverse inequality
  $|A_\infty|^2\geq H_\infty^2/(n-1)$.  Equality therefore holds, and the
  remaining principal curvatures are equal.  The Codazzi equations identify
  the limit as a round shrinking cylinder.  Its radius at time zero is one
  because $H_\infty(0,0)=n-1$.

  It remains only to extend the convergence to the full neighborhood in
the statement.  On every compact subset of the limiting shrinking
cylinder, the mean curvature is bounded above and below by positive
constants.  Smooth convergence and
Corollary~\ref{cor:parabolic-control} therefore give the same curvature
comparison on a slightly larger normalized parabolic region.  The
compactness argument may then be repeated there.  Each repetition
enlarges the available space-time buffer by a definite amount depending
only on $n$ and $\alpha$.  Since $L$ and $\Theta$ are fixed, finitely
many repetitions, followed by a diagonal subsequence, give smooth
convergence on $\mathcal P\bigl(0,0,2(L+2),\Theta\bigr)$.  

  The time-zero slices now converge in $C^k$ on the larger buffer region to
  the unit round cylinder.  The standard integration of curvature necks and
  the normal-neck parametrization
  \cite{HS09}*{Propositions~3.4, 3.5 and Theorems~3.12, 3.14}
  then show that, for all sufficiently large $j$, $p_j$ is the center of an
  $(\eps,k,L)$-neck of radius $r_j$.  This contradicts the choice of the
  sequence.  If $\Theta\geq L^2$, the same smooth convergence on the
  backward interval $[-L^2,0]$ gives the asserted parabolic $C^k$-closeness.
\end{proof}

\section{Key estimates for surgically modified flows}
\label{sec:standard-surgery}

From this point onward the ambient sectional curvature is $-1$, and we write
\begin{equation*}
\CC_\tau(R,\alpha):=\CC_{\tau,1}(R,\alpha).
\end{equation*}
Hyperbolic space has infinite injectivity radius, constant curvature, and
parallel curvature tensor.  In particular, the normal-coordinate estimates
used in the surgery construction are uniform in the center of the surgery
neck.

We use the standard replacement of Huisken--Sinestrari
\cite{HS09}*{Section~3}.  An $(\eps,k,L)$-neck of radius $r$ is pulled back
to the tangent space by the exponential map, rescaled by $r^{-1}$,
modified by the standard bending and cap construction, and then mapped
back to $\Hh^{n+1}$.  At this scale the ambient metric is an $O(r^2)$
perturbation of the Euclidean metric.  All necks used for surgery are
understood in the canonical normal parametrization of
\cite{HS09}*{Section~3}.

The surgery-time arguments for the cylindrical, derivative, and
neck-detection estimates are those of \cite{HS09}*{Sections~5--7}.  Their
space-form version appears in \cite{LN21}*{Sections~5.2--5.5}.  We cite these
standard arguments below and give details where the hyperbolic ambient metric
or the quantitative $\tau$-bi-Ricci condition requires an additional check.

\begin{definition}
  A \emph{surgically modified mean curvature flow} is a finite or locally
  finite sequence
  \begin{equation*}
  X^i:M_i\times[T_i,T_{i+1})\longrightarrow\Hh^{n+1},
  \qquad
  0=T_0<T_1<T_2<\cdots,
  \end{equation*}
  of smooth mean curvature flows such that, at every surgery time $T_{i+1}$,
  finitely many disjoint normal necks of a common radius are replaced by
  standard caps.  Components recognized as diffeomorphic to $\Sph^n$ or
  $\Sph^{n-1}\times\Sph^1$ may subsequently be discarded.
\end{definition}

\subsection{The standard replacement and the $\tau$-bi-Ricci condition}

We first verify directly that the quantitative $\tau$-bi-Ricci condition
survives the standard bending.  The calculation follows the same general
principle as the surgery calculation in
\cite{BH17}*{Lemma~7.3}: the axial principal curvature receives a definite
positive increment, whereas the remaining principal curvatures change only
by lower-order terms.

\begin{lemma}[Preservation of the $\tau$-bi-Ricci pinching under surgery]
  \label{lem:surgery-taubiric-preservation}
  Fix $\tau\in\mathcal I_n$ and an initial quantitative pinching constant
  $\alpha_0^{\mathrm{in}}>0$.  There exist surgery parameters
  $B>1$ and $0<\delta_0<1$, and constants
  \begin{equation*}
  \eps_b>0,\qquad
  k_b\geq2,\qquad
  L_b\geq40,\qquad
  r_b>0,
  \end{equation*}
  together with
  \begin{equation*}
  0<\alpha_0^*\leq\alpha_0^{\mathrm{in}},
  \qquad
  c_b>0,
  \end{equation*}
  depending only on $n$, $\tau$, $\alpha_0^{\mathrm{in}}$, and the fixed
  standard cap, with the following property.

  Suppose that a standard replacement is performed on an
  $(\eps,k,L)$-neck of radius $r$ satisfying
  \begin{equation*}
  \eps\leq\eps_b,\qquad
  k\geq k_b,\qquad
  L\geq L_b,\qquad
  r\leq r_b,
  \end{equation*}
  and suppose that on the incoming neck the reduced pinching inequality
  \begin{equation}\label{eq:incoming-surgery-taubiric}
    P_{ij}^{(\tau)}
    \geq
    \alpha_0^*H|\lambda_p-\lambda_q|
  \end{equation}
  for every $i\ne j$ and all $p,q$.  Then every changed point after surgery
  satisfies
  \begin{equation}\label{eq:surgery-taubiric-ratio}
    \widetilde P_{ij}^{(\tau)}
    \geq
    \alpha_0^*\widetilde H
    |\widetilde\lambda_p-\widetilde\lambda_q|
  \end{equation}
  for every $i\ne j$ and all $p,q$.  Moreover,
  \begin{equation}\label{eq:surgery-taubiric-strict}
    \widetilde P_{ij}^{(\tau)}
    \geq c_b r^{-2},
    \qquad
    \widetilde H\geq c_b r^{-1}
  \end{equation}
  throughout the changed region.
\end{lemma}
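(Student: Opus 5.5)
Our approach is a scale-invariant reduction to a Euclidean model computation, followed by a perturbation argument. After pulling back by $\exp_{X(p)}^{-1}$ and rescaling lengths by $r^{-1}$, the ambient metric becomes $\delta+O(r^2)$ in $C^{k}$ on the relevant ball. The ambient curvature becomes $-r^2$, so the term $c_{n,\tau}\kappa^2$ in \eqref{eq:Ptau-expanded} contributes $O(r^2)$ in rescaled units. The neck is $\eps$-close in $C^k$ to the unit cylinder. The standard replacement of \cite{HS09}*{Section~3} is a fixed one-parameter family of rotationally symmetric Euclidean hypersurfaces $\Sigma_{B,\delta_0}$. These are obtained from the unit cylinder by bending the profile $u(z)=1$ to $u(z)=1-e^{-B/(z-z_0)}$ on a region of length $\sim B$, and then capping off with a fixed convex cap. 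So the changed region is $\Sigma_{B,\delta_0}$, perturbed by $O(\eps+r^2)$ in $C^2$. Since every quantity in \eqref{eq:surgery-taubiric-ratio} is a continuous, degree-$2$ homogeneous function of $(\lambda,\kappa)$, it suffices to prove a \emph{strict} version of the inequality on the Euclidean model with a margin, and then absorb the perturbation.

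\textbf{Model computation.} For a rotational hypersurface with profile $u$, the principal curvatures are one axial curvature $\mu=-u''/(1+u'^2)^{3/2}$ and $n-1$ equal spherical curvatures $\nu=1/(u\sqrt{1+u'^2})$. For the unit cylinder, $(\mu,\nu)=(0,1)$ and $H=n-1$. For a pair of spherical directions the Euclidean $P$ equals $2H\nu-(2+\tau)\nu^2=(2(n-2)-\tau)$ at the cylinder. For a mixed pair it equals $H(\mu+\nu)-\mu^2-\nu^2-\tau\mu\nu$, which at the cylinder is $n-2$. Both are positive precisely for $\tau\in\mathcal I_n$. The right-hand side $\alpha_0^* H|\lambda_p-\lambda_q|$ equals $\alpha_0^*(n-1)$ on the cylinder. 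So any $\alpha_0^*\leq\min\{n-2,2(n-2)-\tau\}/(2(n-1))$ leaves a margin of order one.

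Along the bending region, $\mu\ge0$ grows while $\nu$ increases by only lower-order amounts. This is the principle of \cite{BH17}*{Lemma~7.3}. Writing $\mu=\theta\nu$ with $\theta\ge0$ and normalizing $\nu=1$, we get $H=n-1+\theta$. The spherical pair then gives $2(n-2)-\tau+2\theta$, and the mixed pair gives
\[
(n-1+\theta)(1+\theta)-1-\theta^2-\tau\theta=(n-2)+(n-\tau)\theta .
\]
Both increase in $\theta$, while the right side grows like $\alpha_0^*(n-1+\theta)|1-\theta|$. Because $\theta$ stays bounded on the fixed model, a small $\alpha_0^*$ still works. On the convex cap, which we take smooth, rotational and uniformly convex, all $\lambda_i$ are comparable and positive. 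Each $P_{ij}$ is then bounded below by the expression above at $\theta\in[\theta_{\min},\theta_{\max}]$, with $\theta_{\min}>0$, and the same bound holds at the tip, where $\theta=1$.

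Compactness of the changed model region gives
\[
\min P\ge c\,\max H^2
\]
and $H\ge c$. We choose $B,\delta_0$ once and for all, and shrink $\alpha_0^*$ (keeping $\alpha_0^*\le\alpha_0^{\mathrm{in}}$) so that the model satisfies \eqref{eq:surgery-taubiric-ratio} with margin $\tfrac12 c$.

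\textbf{Perturbation and the main obstacle.} Next we take $\eps_b$, $r_b$ small and $k_b\ge2$, so that the $C^2$ error in the principal curvatures is smaller than this margin. Rescaling back by $r$ then gives \eqref{eq:surgery-taubiric-ratio} and \eqref{eq:surgery-taubiric-strict}, with constants $c_br^{-2}$ and $c_br^{-1}$.

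The main obstacle is the transition zone at the start of the bending. There $\mu$ is tiny but possibly of either sign, since the incoming neck is only $\eps$-close to the cylinder. The model margin there comes from the cylinder values $n-2$ and $2(n-2)-\tau$, and these are uniform. The incoming hypothesis \eqref{eq:incoming-surgery-taubiric} is used only on the unchanged gluing region, where the new inequality is then inherited. We must also make sure that $L_b\ge40$ leaves room for the bending length determined by $B$, so that all changed points lie in the $\eps$-controlled part of the neck. The hypothesis $\tau\in\mathcal I_n$ is exactly what makes the spherical-pair margin $2(n-2)-\tau$ positive, and it is the only place where the strict inequality is used.
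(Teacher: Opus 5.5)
Your proof is correct, but it is organized differently from the paper's.

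For the ratio inequality \eqref{eq:surgery-taubiric-ratio} on the bent portion, the paper uses relative bending estimates: $\widetilde\lambda_1=\lambda_1+d+O(\theta d)$ and $\widetilde\lambda_i=\lambda_i+O(\theta d)$, with $d=\delta_0D_1D_1u$. From these it gets $\widetilde P_{ij}^{(\tau)}\geq P_{ij}^{(\tau)}+\frac14\min\{2,n-\tau\}r^{-1}d$, while $\widetilde H|\widetilde\lambda_p-\widetilde\lambda_q|$ grows by at most $C(n)r^{-1}d$. It then closes the argument with the incoming hypothesis \eqref{eq:incoming-surgery-taubiric}. Only on the transition and cap, and for the strict bounds \eqref{eq:surgery-taubiric-strict}, does the paper argue as you do, via the fixed normalized model, compactness, and absorption of the $O(\eps+r^2)$ errors.

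You run the model-and-perturbation argument on the whole changed region. This is legitimate because the unit cylinder already lies strictly inside the pinching cone once $\alpha_0^*(n-1)<\min\{n-2,2(n-2)-\tau\}$. So even near the collar, where the bending degenerates and $\lambda_1$ may have either sign, there is an order-one normalized margin that absorbs the deviation.

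Your route is shorter and shows that \eqref{eq:incoming-surgery-taubiric} is not actually needed at changed points. It also isolates the cylinder margins as the only place where $\tau\in\mathcal I_n$ enters. The paper's relative estimate, modeled on \cite{BH17}*{Lemma~7.3}, instead exhibits the monotone gain produced by the bending. That mechanism is genuinely required for comparisons with no margin at the collar, such as \eqref{eq:surgery-minimum-H}, but not for this lemma.

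Two minor slips. First, the bending should carry the factor $\delta_0$, i.e. the profile is $1-\delta_0e^{-B/(z-z_0)}$ on a fixed interval such as $z-z_0\in[0,30]$, not on a region of length $\sim B$. Moreover, $\mu\geq0$ there needs $B$ large relative to that interval, since $(e^{-B/s})''\geq0$ only for $s\leq B/2$. Second, with the bending $N-\delta_0u\nu$ used in the paper, the bent surface involves the normal of $N$. Hence $C^2$-closeness of the bent surface to the model needs $C^3$-closeness of the neck, so you should take $k_b\geq3$.
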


\begin{proof}
 The proof is divided into three steps.  We first treat the bent portion
of the neck using the relative bending estimates.  We then obtain a
uniform quantitative margin on the normalized transition and cap.
Finally, we absorb the ambient-curvature and normal-coordinate errors
and derive the strict bounds.  The parameters are chosen successively,
beginning with $\theta$ and the fixed bending parameters, followed by
$\alpha_0^*$, the neck quality, and finally the surgery radius.

  It is useful to separate the homogeneous quadratic part of the curvature
  expression from its fixed ambient term.  Since the ambient sectional
  curvature is $-1$, \eqref{eq:Ptau-expanded} may be written as
  \begin{equation}\label{eq:surgery-Phi-definition}
    P_{ij}^{(\tau)}
    =
    \Phi_{ij}^{(\tau)}(\lambda)-c_{n,\tau},
  \end{equation}
  where
  \begin{equation*}
  \Phi_{ij}^{(\tau)}(\lambda)
  :=
  (2-\tau)\lambda_i\lambda_j
  +(\lambda_i+\lambda_j)
  \sum_{\ell\ne i,j}\lambda_\ell,
  \qquad
  c_{n,\tau}=2(n-1)-\tau.
  \end{equation*}

\textbf{Step 1. The bent portion of the neck.}   We first consider the bent portion of the neck.  In the pulled-back normal
  coordinates used for the standard replacement, write
  \begin{equation*}
  u(z)
  =
  r\exp\left(-\frac{B}{z-10}\right),
  \qquad
  z\in[10,40].
  \end{equation*}
  The bent hypersurface in these coordinates is
  \begin{equation*}
  \widetilde N(\omega,z)
  =
  N(\omega,z)-\delta_0u(z)\nu(\omega,z).
  \end{equation*}
  Here $\delta_0$ is the bending parameter in the surgery construction and is
  unrelated to the parameter $\tau$ in the $\tau$-bi-Ricci curvature.

  Let $\lambda_1,\ldots,\lambda_n$ be the principal curvatures at a point of
  the incoming neck, with $\lambda_1$ corresponding to the axial direction,
  and let
  $\widetilde\lambda_1,\ldots,\widetilde\lambda_n$ denote the principal
  curvatures at the corresponding point of the bent hypersurface.  Fix
  $\theta>0$ sufficiently small, depending only on $n$ and $\tau$, so that
  all the absorptions below hold.  The standard construction allows us to
  choose $\delta_0$ sufficiently small and $B$ sufficiently large, and then to
  improve the neck quality and decrease the surgery radius, so that the
  bending estimates give
  \begin{align}
    \left|
    \widetilde\lambda_1-
    \left(
    \lambda_1+\delta_0D_1D_1u+\delta_0u\lambda_1^2
    \right)
    \right|
    &\leq
    \theta\delta_0D_1D_1u,
    \label{eq:surgery-lambda1-raw}\\
    \left|
    \widetilde\lambda_i-
    \left(
    \lambda_i+\delta_0u\lambda_i^2
    \right)
    \right|
    &\leq
    \theta\delta_0D_1D_1u,
    \qquad 2\leq i\leq n.
    \label{eq:surgery-lambdai-raw}
  \end{align}
  In the Euclidean model these are the principal-curvature estimates of
  \cite{HS09}*{Theorem~3.19 and Remark~3.20}.  After re-embedding in
  hyperbolic normal coordinates, the relative Riemannian calculation in
  \cite{BH17}*{the proof of Lemma~7.3} gives the same estimates, with the
  error bounded by $\theta\delta_0D_1D_1u$, once the neck quality is improved
  and $r$ is sufficiently small.

  Increasing $B$ further, we may arrange
  \begin{equation}\label{eq:surgery-bending-smallness}
    u\leq\theta r^2D_1D_1u,
    \qquad
    0\leq D_1D_1u\leq\theta r^{-1}.
  \end{equation}
  These inequalities are the scale-invariant consequences of
  \cite{HS09}*{Lemma~3.18}. Compare again the parameter choice in
  \cite{BH17}*{the proof of Lemma~7.3}.
  Since on an $\eps$-neck one has $|\lambda_i|\leq Cr^{-1}$, the terms
  $u\lambda_i^2$ in
  \eqref{eq:surgery-lambda1-raw}--\eqref{eq:surgery-lambdai-raw}
  are absorbed by the error.  After changing $\theta$ by a fixed
  dimensional factor, we therefore obtain
  \begin{align}
    \left|
    \widetilde\lambda_1-
    \left(\lambda_1+\delta_0D_1D_1u\right)
    \right|
    &\leq
    \theta\delta_0D_1D_1u,
    \label{eq:surgery-lambda1}\\
    |\widetilde\lambda_i-\lambda_i|
    &\leq
    \theta\delta_0D_1D_1u,
    \qquad 2\leq i\leq n.
    \label{eq:surgery-lambdai}
  \end{align}
  Set $d:=\delta_0D_1D_1u$.  
  Then
  \begin{equation}\label{eq:surgery-H-expansion}
    \widetilde H
    =
    H+d+O(\theta d).
  \end{equation}
  Thus, by the choice of $\theta$,
  \begin{equation}\label{eq:surgery-H-bent}
    \widetilde H\geq H+\frac12d\geq H.
  \end{equation}

We now compute the change of $P_{ij}^{(\tau)}$.  The function
$\Phi_{ij}^{(\tau)}$ is linear in each principal curvature separately.
Using \eqref{eq:surgery-lambda1}--\eqref{eq:surgery-lambdai},
$|\lambda_i|\leq Cr^{-1}$, and
$d\leq C\theta r^{-1}$, all terms containing at least one error factor
are absorbed into $O(\theta r^{-1}d)$. Then for
  $2\leq i<j\leq n$,
  \begin{equation}\label{eq:surgery-Pij-spherical}
    \widetilde P_{ij}^{(\tau)}
    =
    P_{ij}^{(\tau)}
    +d(\lambda_i+\lambda_j)
    +O(\theta r^{-1}d),
  \end{equation}
  whereas, for $j\geq2$,
  \begin{equation}\label{eq:surgery-P1j}
    \widetilde P_{1j}^{(\tau)}
    =
    P_{1j}^{(\tau)}
    +d\left(
    (2-\tau)\lambda_j
    +\sum_{\ell\ne1,j}\lambda_\ell
    \right)
    +O(\theta r^{-1}d).
  \end{equation}
  The constant ambient term $-c_{n,\tau}$ cancels in these differences.

  On the cylindrical model with curvature vector
  $(0,r^{-1},\ldots,r^{-1})$,
  \begin{equation*}
  \lambda_1=0,
  \qquad
  \lambda_2=\cdots=\lambda_n=r^{-1}.
  \end{equation*}
  Hence the coefficient of $d$ in
  \eqref{eq:surgery-Pij-spherical} is $2r^{-1}$, while the coefficient in
  \eqref{eq:surgery-P1j} is $(n-\tau)r^{-1}$.
  By decreasing $\eps_b$ while keeping the preceding choice of $\theta$, the
  incoming neck is close enough to the round cylinder that
  \begin{equation}\label{eq:surgery-P-gain}
    \widetilde P_{ij}^{(\tau)}
    \geq
    P_{ij}^{(\tau)}
    +\frac{\min\{2,n-\tau\}}4r^{-1}d
  \end{equation}
  throughout the bent region.

  We next control the right-hand side of the quantitative pinching
  inequality.  Since
  \begin{equation*}
  H\simeq r^{-1},
  \qquad
  |\lambda_i|\leq Cr^{-1},
  \end{equation*}
  equations
  \eqref{eq:surgery-lambda1}--\eqref{eq:surgery-H-expansion} give, for
  arbitrary $p,q$,
  \begin{equation}\label{eq:surgery-width-change}
    \widetilde H
    |\widetilde\lambda_p-\widetilde\lambda_q|
    \leq
    H|\lambda_p-\lambda_q|
    +C(n)r^{-1}d.
  \end{equation}
  Combining
  \eqref{eq:incoming-surgery-taubiric},
  \eqref{eq:surgery-P-gain}, and
  \eqref{eq:surgery-width-change}, we obtain
  \begin{align}
    &\widetilde P_{ij}^{(\tau)}
    -\alpha_0^*\widetilde H
    |\widetilde\lambda_p-\widetilde\lambda_q|
    \notag\\
    &\qquad\geq
    \left(
    \frac{\min\{2,n-\tau\}}4-C(n)\alpha_0^*
    \right)r^{-1}d.
    \label{eq:surgery-quantitative-margin}
  \end{align}
  We may therefore choose
  \begin{equation}\label{eq:surgery-alpha-star-choice}
    \alpha_0^*
    \leq
    \min\left\{
    \frac12\alpha_0^{\mathrm{in}},
    \frac{\min\{2,n-\tau\}}{8C(n)}
    \right\}.
  \end{equation}
  Then \eqref{eq:surgery-quantitative-margin} is nonnegative, proving
  \eqref{eq:surgery-taubiric-ratio} throughout the bent portion.

  \smallskip
\textbf{Step 2. The transition and cap regions.}
We next consider the transition and cap regions. We are free to decrease $\alpha_0^*$ further at this stage. Since all restrictions imposed on $\alpha_0^*$ in the bent-region argument are upper bounds, the preceding proof of \eqref{eq:surgery-taubiric-ratio} remains valid with the reduced value of $\alpha_0^*$.  

For a principal-curvature vector $\lambda$ at scale $r$, set
$\mu_i=r\lambda_i$. 
  Then
  \begin{equation}\label{eq:normalized-surgery-P}
    r^2P_{ij}^{(\tau)}
    =
    \Phi_{ij}^{(\tau)}(\mu)-c_{n,\tau}r^2.
  \end{equation}
  On the unit cylinder $(0,1,\ldots,1)$,
  \begin{equation}\label{eq:surgery-cylinder-Phi}
    \Phi_{1j}^{(\tau)}=n-2,
    \qquad
    \Phi_{ij}^{(\tau)}
    =2(n-2)-\tau,
    \quad 2\leq i<j\leq n.
  \end{equation}
  Within the analytically allowed range $0\leq\tau\leq2$, both quantities are
  strictly positive precisely for $\tau\in\mathcal I_n$.

By \cite{HS09}*{Corollary~3.21 and Theorem~3.22}, after the parameters
$B$ and $\delta_0$ have been fixed, the normalized transition and cap
are described by a fixed smooth model.  On the portion close to the
bent cylinder, \eqref{eq:surgery-cylinder-Phi} gives a uniform positive
lower bound for every $\Phi_{ij}^{(\tau)}$.  On the remaining fixed
strictly convex portion, the same conclusion follows from
$0\leq\tau\leq2$.  Compactness of the normalized model therefore gives
a uniform positive lower bound for all $\Phi_{ij}^{(\tau)}$, and this
bound persists for all sufficiently good incoming necks.

The normalized mean curvature and all normalized spectral widths are
uniformly bounded on the same region.  We may therefore decrease
$\alpha_0^*$, still subject to \eqref{eq:surgery-alpha-star-choice},
and then choose $c_b>0$ so that
\begin{equation}\label{eq:normalized-cap-taubiric}
 \Phi_{ij}^{(\tau)}(\mu)
 \geq
 2\alpha_0^*
 \left(\sum_\ell\mu_\ell\right)
 |\mu_p-\mu_q|+2c_b
\end{equation}
for every $i\ne j$ and all $p,q$ throughout the normalized transition
and cap.

\textbf{Step 3. Hyperbolic errors and the strict bounds.}   It remains to compare the normalized Euclidean model with the hyperbolic
  metric.  In geodesic normal coordinates at the center of the neck, let
  $D_r(y)=ry$.
  On every fixed ball containing the normalized replacement,
  \begin{equation}\label{eq:hyperbolic-normal-metric}
    \left\|
    r^{-2}D_r^*\bigl(\exp_{X(p)}^*g_{\Hh}\bigr)
    -g_{\mathbb R^{n+1}}
    \right\|_{C^m}
    \leq C(n,m)r^2
  \end{equation}
  for every fixed $m$.  The normalized replacement immersions have uniform
  $C^{m+2}$ bounds on this fixed region.  The second fundamental form depends
  smoothly on the $C^2$ jets of the ambient metric and the immersion.  Hence
  \eqref{eq:hyperbolic-normal-metric} gives $O(r^2)$ control of the normalized
  Weingarten maps and therefore of their eigenvalues, mean curvatures, and the
  two sides of \eqref{eq:normalized-cap-taubiric}.  In view of
  \eqref{eq:normalized-surgery-P}, the strict margin in
  \eqref{eq:normalized-cap-taubiric} absorbs both $c_{n,\tau}r^2$ and the
  normal-coordinate error when $r_b$ is sufficiently small.  This proves
  \eqref{eq:surgery-taubiric-ratio} on the entire changed region.

 The same estimates also give the uniform strict bounds in
\eqref{eq:surgery-taubiric-strict}.  On the bent portion, the incoming
neck is uniformly close after rescaling to the round cylinder, and
\eqref{eq:surgery-lambda1}--\eqref{eq:surgery-lambdai} preserve this
closeness after improving the neck quality.  Hence
\eqref{eq:surgery-cylinder-Phi}, together with a further decrease of
$r_b$, gives a fixed positive normalized lower bound for
$\widetilde P_{ij}^{(\tau)}$.  On the transition and cap the same
conclusion follows from the strict margin in
\eqref{eq:normalized-cap-taubiric}.

Moreover, on the bent portion
\eqref{eq:surgery-H-bent} and the neck geometry give
$\widetilde H\geq H\simeq r^{-1}$, while the normalized transition and
cap have mean curvature bounded uniformly away from zero.  Reducing
$c_b$ and $r_b$ if necessary, we therefore obtain
\begin{equation*}
    \widetilde P_{ij}^{(\tau)}
    \geq c_b r^{-2},
    \qquad
    \widetilde H\geq c_b r^{-1}
\end{equation*}
throughout the changed region.  This proves
\eqref{eq:surgery-taubiric-strict}.
\end{proof}

We now collect the remaining standard estimates for the replacement.
Unlike the preceding lemma, these estimates require no new
$\tau$-bi-Ricci algebra.

\begin{lemma}[Estimates for the standard replacement]
  \label{lem:standard-replacement}
  Fix $0<\eta<1$ and an integer $m_0\geq2$.  After fixing
  $\alpha_0^*$ and the surgery parameters in
  Lemma~\ref{lem:surgery-taubiric-preservation}, the neck quality may be
  improved and the surgery radius decreased, without changing
  $\alpha_0^*$, so that there exist
  \begin{equation*}
  \eps_s>0,\qquad
  k_s\geq m_0+2,\qquad
  L_s\geq L_b,\qquad
  r_s>0,
  \end{equation*}
  and constants $c_s>0$, $C_m<\infty$, $0\leq m\leq m_0$, with the
  following property.

  If surgery is performed on an $(\eps,k,L)$-neck of radius $r$ satisfying
  \begin{equation*}
  \eps\leq\eps_s,\qquad
  k\geq k_s,\qquad
  L\geq L_s,\qquad
  r\leq r_s,
  \end{equation*}
  and if \eqref{eq:incoming-surgery-taubiric} holds on the incoming neck,
  then every changed point satisfies
  \begin{align}
    \widetilde P_{ij}^{(\tau)}
    &\geq
    \alpha_0^*\widetilde H
    |\widetilde\lambda_p-\widetilde\lambda_q|,
    \label{eq:standard-replacement-taubiric}\\
    \widetilde P_{ij}^{(\tau)}
    &\geq c_sr^{-2},
    \qquad
    \widetilde H\geq c_sr^{-1},
    \label{eq:standard-replacement-strict}\\
    |\widetilde A|^2-
    \left(\frac1{n-1}+\eta\right)\widetilde H^2
    &\leq
    -c_s\widetilde H^2,
    \label{eq:surgery-cylindrical-margin}\\
    r^{m+1}|\widetilde\nabla^m\widetilde A|
    &\leq C_m,
    \qquad
    0\leq m\leq m_0.
    \label{eq:surgery-derivative-model}
  \end{align}
  The area removed from each changed neck is at least $c_sr^n$, and the
  replacement may be chosen so that
  \begin{equation}\label{eq:surgery-minimum-H}
    \min_{\widetilde M}\widetilde H
    \geq
    \min_MH.
  \end{equation}

  If $0<R\leq1$ and
  \begin{equation}\label{eq:surgery-radius-class-bound}
    r\leq\frac{c_sR}{n+\alpha_2},
  \end{equation}
  then the replacement preserves
  $\CC_\tau(R,(\alpha_0^*,\alpha_1,\alpha_2))$.
\end{lemma}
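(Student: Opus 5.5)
The plan is to take \eqref{eq:standard-replacement-taubiric} and \eqref{eq:standard-replacement-strict} directly from Lemma~\ref{lem:surgery-taubiric-preservation}. That lemma only requires upper bounds on $\eps$ and $r$ and lower bounds on $k$ and $L$. We therefore set $\eps_s\leq\eps_b$, $k_s\geq\max\{k_b,m_0+2\}$, $L_s\geq L_b$, $r_s\leq r_b$ and $c_s\leq c_b$, and then improve these further below; none of these changes affects $\alpha_0^*$. The remaining statements are properties of the fixed normalized Euclidean model together with $O(r^2)$ hyperbolic errors. Each will be checked on the normalized model with a definite margin, and the margin will then be absorbed by shrinking $\eps_s$ and $r_s$, exactly as in Step~3 of the preceding proof.

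For the cylindrical margin \eqref{eq:surgery-cylindrical-margin}, work in normalized curvatures $\mu=r\widetilde\lambda$. On the unit cylinder $(0,1,\ldots,1)$ one has $|\mu|^2=(n-1)=\frac1{n-1}(\sum\mu)^2$, so the quantity there equals $-\eta(n-1)^2<0$. On the bent portion, \eqref{eq:surgery-lambda1}--\eqref{eq:surgery-lambdai} add $d\geq0$ to the axial curvature up to $O(\theta d)$. This lowers $|A|^2-\frac1{n-1}H^2$ to first order, since the derivative of this expression in $\lambda_1$ at the cylinder is $-\frac{2}{n-1}H<0$. The margin therefore stays at least $\frac12\eta(n-1)^2r^{-2}$, provided the neck quality is good enough. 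On the fixed transition and cap we argue by compactness. The model is a rotationally symmetric hypersurface with principal curvatures $(\mu_1,\mu_2,\ldots,\mu_2)$ and $\mu_1\geq0$. Cauchy's inequality gives $|\mu|^2\leq\frac1{n-1}(\sum\mu)^2$ whenever $\mu_1\leq$ the others (the case computed in Lemma~\ref{lem:simons-commutator-coercivity} with $q=1$). On the cap portion, where $\mu_1$ may exceed $\mu_2$, the quantity is still at most $\frac1n(\sum\mu)^2$-close to umbilic, so strictly below $\frac1{n-1}$. Compactness then gives a uniform negative bound $-2c_s\widetilde H^2$, with $\widetilde H\simeq r^{-1}$ from \eqref{eq:standard-replacement-strict}. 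I expect this algebraic check on the interpolation region to be the main point requiring care: one must verify that the Huisken--Sinestrari cap keeps $\mu_1\leq\mu_2$ or stays uniformly close to umbilic. I would cite \cite{HS09}*{Theorem~3.22}, where exactly this pinching $|A|^2\leq(\frac1{n-1}+\eta)H^2$ with margin is recorded for the surgery.

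The derivative bounds \eqref{eq:surgery-derivative-model} follow because the normalized replacement is a fixed smooth model in $C^{m_0+2}$, perturbed by the incoming neck in $C^k$ with $k\geq m_0+2$. The rescaled ambient metric is $C^m$-close to Euclidean by \eqref{eq:hyperbolic-normal-metric}. The area statement follows since the normalized cap has area strictly smaller than the removed cylinder segment by a fixed amount, and this difference scales by $r^n$ up to $O(\eps+r^2)$ errors. For \eqref{eq:surgery-minimum-H}, unchanged points keep their curvature. Changed points satisfy $\widetilde H\geq c_sr^{-1}$, while the neck has $H\leq 2(n-1)r^{-1}$ somewhere. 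To compare with $\min_M H$, one would instead note that on the bent portion $\widetilde H\geq H$ by \eqref{eq:surgery-H-bent}, and the cap has $\widetilde H\geq\frac12(n-1)r^{-1}\cdot$(model minimum). Choosing the cap as in \cite{HS09} with mean curvature not below the neck's, and shrinking $\eps_s$, gives $\widetilde H\geq\min_{\text{neck}}H\geq\min_M H$.

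Finally, for preservation of $\CC_\tau(R,(\alpha_0^*,\alpha_1,\alpha_2))$, we check the three conditions. Condition (i) holds at unchanged points because $\alpha_0^*\leq\alpha_0^{\mathrm{in}}$, and at changed points by \eqref{eq:standard-replacement-taubiric}. Condition (iii) holds because the area decreases. For (ii), at changed points $\widetilde H\geq c_sr^{-1}\geq(n+\alpha_2)R^{-1}\geq n+\alpha_2R^{-1}$, using \eqref{eq:surgery-radius-class-bound} and $R\leq1$; at unchanged points the bound is inherited.
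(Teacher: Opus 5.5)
Your proposal is correct and follows essentially the same route as the paper. You import \eqref{eq:standard-replacement-taubiric}--\eqref{eq:standard-replacement-strict} from Lemma~\ref{lem:surgery-taubiric-preservation}, check the cylindrical margin, derivative and area bounds on the fixed normalized Euclidean model (citing \cite{HS09} for the cap) and absorb the $O(\eps)$ neck and $O(r^2)$ hyperbolic errors, combine the relative estimate \eqref{eq:surgery-H-bent} with a model margin for \eqref{eq:surgery-minimum-H}, and get class preservation from $\widetilde H\geq c_sr^{-1}\geq(n+\alpha_2)R^{-1}$. The one point to sharpen is that on the transition and cap the model mean curvature must exceed the cylinder value $n-1$ by a fixed positive margin, which holds because this region lies a fixed normalized distance from the collar; a merely non-strict ``not below'' comparison could not absorb those errors.
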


\begin{proof}
  The estimates \eqref{eq:standard-replacement-taubiric} and
\eqref{eq:standard-replacement-strict} follow from
  Lemma~\ref{lem:surgery-taubiric-preservation}, after decreasing $c_s$ if
  necessary.

  The cylindrical, derivative, and area estimates for the Euclidean
  replacement are standard.  We use the construction of
  \cite{HS09}*{Section~3, in particular Corollary~3.21 and Theorem~3.22},
  together with \cite{HS09}*{Theorem~5.3(ii)} applied with parameter
  $\eta/2$.  On the fixed normalized replacement these conclusions have
  strict margins.  Applying \eqref{eq:hyperbolic-normal-metric} with
  $m=m_0+2$ makes the hyperbolic curvature quantities $C^{m_0}$-close to
  their Euclidean counterparts.
  Decreasing $r_s$ therefore preserves the strict cylindrical margin and the
  derivative bounds.  It also changes the Euclidean area difference only by
  $O(r^{n+2})$, so the area loss remains at least $c_sr^n$.

  For the non-strict comparison of the minimum of $H$, an absolute
  $O(r^2)$ perturbation estimate is not sufficient near the unchanged collar,
  where the Euclidean gain tends to zero.  On the bent portion the relative
  estimate gives
  \begin{equation*}
  \widetilde H-H
  \geq
  \frac12\delta_0D_1D_1u\geq0.
  \end{equation*}
  At the boundary of the changed region the deformation and all its
  derivatives vanish, so the two hypersurfaces agree exactly.  On the part a
  fixed normalized distance from this boundary, the Euclidean interpolation
  and cap have a fixed positive mean-curvature margin over the unit cylinder
  by \cite{HS09}*{Corollary~3.21 and Theorem~3.22}.  The incoming neck satisfies
  $rH=(n-1)+O(\eps)$.  After first decreasing $\eps_s$ and then $r_s$, the
  positive model margin absorbs both the neck error and the hyperbolic
  normal-coordinate error.  Thus $\widetilde H\geq H$ there, while the relative
  estimate handles the remaining collar.  Consequently
  \eqref{eq:surgery-minimum-H} holds.  Compare also
  \cite{BH17}*{the proof of Lemma~7.3}.

  Finally, on a changed region
  \begin{equation*}
  \widetilde H\geq c_sr^{-1}.
  \end{equation*}
  Under \eqref{eq:surgery-radius-class-bound},
  \begin{equation*}
  \widetilde H
  \geq
  (n+\alpha_2)R^{-1}
  \geq
  n+\alpha_2R^{-1},
  \end{equation*}
  where $R\leq1$ was used in the last inequality.  Thus the lower
  mean-curvature condition is preserved.  The area does not increase,
  the quantitative $\tau$-bi-Ricci condition is preserved with the fixed
  constant $\alpha_0^*$, and discarded components only remove points.
  Therefore the replacement preserves
  $\CC_\tau(R,(\alpha_0^*,\alpha_1,\alpha_2))$.
\end{proof}

\begin{corollary}[Preservation of the surgery class]
  \label{cor:surgery-class-preserved}
  Fix the parameters so that every standard replacement satisfies
  Lemma~\ref{lem:standard-replacement}, including
  \eqref{eq:surgery-radius-class-bound}.   Reduce the quantitative pinching constant obtained from
  Theorem~\ref{thm:taubiric-preservation} once to the constant
  $\alpha_0^*$ in Lemma~\ref{lem:surgery-taubiric-preservation}.  The original
  inequality implies the reduced one because $\alpha_0^*$ was chosen no
  larger than the original constant.  Theorem~\ref{thm:taubiric-preservation}
  preserves it between surgery times, and
  Lemma~\ref{lem:surgery-taubiric-preservation} preserves the same constant at
  every standard replacement.  Hence the entire surgically modified flow
  preserves
  $\CC_\tau(R,(\alpha_0^*,\alpha_1,\alpha_2))$.  In particular, the pinching
  constant does not deteriorate with the number of surgeries.
\end{corollary}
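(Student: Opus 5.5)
The corollary is an induction over the surgery times $0=T_0<T_1<T_2<\cdots$ of the surgically modified flow. It alternates two ingredients. Between surgeries we use preservation of the class along smooth flow. At surgery times we use Lemma~\ref{lem:standard-replacement}.

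First we fix the constants once and for all. The initial hypersurface has positive $\tau$-bi-Ricci curvature. By the last part of Theorem~\ref{thm:taubiric-preservation}, it satisfies \eqref{eq:uniform-taubiric} with some $\alpha_0^{\mathrm{in}}>0$, and $R,\alpha_1,\alpha_2$ are chosen as in the discussion after the definition of the surgery class. With $\alpha_0^{\mathrm{in}}$ as input, Lemma~\ref{lem:surgery-taubiric-preservation} produces $\alpha_0^*\le\alpha_0^{\mathrm{in}}$. Since $H|\lambda_p-\lambda_q|\ge0$, the inequality with constant $\alpha_0^{\mathrm{in}}$ implies the one with $\alpha_0^*$. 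Hence $M_0\in\CC_\tau(R,(\alpha_0^*,\alpha_1,\alpha_2))$. The constants $\alpha_0^*$, $\eps_s,k_s,L_s,r_s,c_s$ depend only on $n,\tau,\alpha_0^{\mathrm{in}}$ and the fixed cap. In particular they are chosen before any surgery is performed and never change afterwards.

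Inductive step on a smooth interval $[T_i,T_{i+1})$. Assume the hypersurface at time $T_i$ lies in the class. Theorem~\ref{thm:taubiric-preservation}, applied with $\kappa=1$ and the fixed constant $\alpha_0^*$, preserves (i). The maximum-principle remark after the definition of the class shows that $\min H$ does not decrease, which gives (ii). Area monotonicity \eqref{s2.evl-dmu} gives (iii). These arguments are purely pointwise or global along smooth flow and do not use the curvature normalization $|A|^2\le R^{-2}$. This matters, because that normalization may fail after a surgery.

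Inductive step at a surgery time $T_{i+1}$. Let $M^-$ be the limiting hypersurface as $t\uparrow T_{i+1}$. It is smooth, since surgery is performed before the singular time, and by the previous step it lies in the class. Every neck replaced at this time satisfies the hypotheses of Lemma~\ref{lem:standard-replacement}, including \eqref{eq:surgery-radius-class-bound}. The incoming inequality \eqref{eq:incoming-surgery-taubiric} is exactly condition (i) for $M^-$. Several disjoint necks may be replaced simultaneously. Each changed point lies in exactly one replacement region, so the pointwise conclusions \eqref{eq:standard-replacement-taubiric} and $\widetilde H\ge (n+\alpha_2)R^{-1}$ hold everywhere. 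Unchanged points inherit (i) and (ii) directly. The area strictly decreases. Discarding components only removes points, so (i)–(iii) remain valid. Therefore the post-surgery hypersurface lies in the class, and the induction closes.

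The main point to check is that the constant in (i) is the same $\alpha_0^*$ at every step. This holds because Lemma~\ref{lem:surgery-taubiric-preservation} outputs the same constant it receives as input, and Theorem~\ref{thm:taubiric-preservation} preserves any fixed constant. The pinching constant is therefore reduced only once, at time zero, and is independent of the number of surgeries.
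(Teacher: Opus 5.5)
Your proposal is correct and follows the paper's own argument. The paper likewise reduces the pinching constant once to $\alpha_0^*\le\alpha_0^{\mathrm{in}}$. Between surgeries it uses Theorem~\ref{thm:taubiric-preservation} together with the smooth-interval remarks for conditions (ii)--(iii). At each surgery time it uses Lemma~\ref{lem:surgery-taubiric-preservation} and Lemma~\ref{lem:standard-replacement}, including \eqref{eq:surgery-radius-class-bound}, which return the same constant $\alpha_0^*$.

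Your explicit induction over the surgery times adds bookkeeping the paper leaves implicit: disjoint necks, discarded components, and the fact that the class conditions do not need $|A|^2\le R^{-2}$ after surgery.
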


\begin{lemma}[Total elapsed time]\label{lem:surgery-time-upper}
  For every surgically modified flow in the fixed surgery class,
  \begin{equation}\label{eq:surgery-total-time}
    \sum_i(T_{i+1}-T_i)
    \leq
    \frac n2\alpha_2^{-2}R^2.
  \end{equation}
\end{lemma}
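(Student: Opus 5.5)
The plan is to run the barrier argument of Lemma~\ref{lem:time-upper} for $y(t)=\min_{M_t}H-n$ across the whole surgically modified flow. The aim is to show that $y$ obeys the differential inequality $y'\geq y^3/n$ on every smooth interval and never drops at a surgery time. Concatenating the intervals then gives a single comparison function on $[0,\sup_i T_{i+1})$, and the blow-up time of that function bounds the total elapsed time.

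\textbf{Smooth intervals.} On each interval $[T_i,T_{i+1})$ the flow is smooth and $\kappa=1$. At a spatial minimum, \eqref{eq:H-evol} together with $|A|^2\geq H^2/n$ gives, in the barrier sense,
\begin{equation*}
 m'\geq\frac1n m(m^2-n^2).
\end{equation*}
With $m=y+n$ this yields $y'\geq y^3/n$, exactly as in the proof of Lemma~\ref{lem:time-upper}. This requires $y\geq0$, which holds throughout by membership in the surgery class (Corollary~\ref{cor:surgery-class-preserved}).

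\textbf{Surgery times.} By \eqref{eq:surgery-minimum-H}, the minimum of $H$ after a standard replacement is at least the minimum before it. Discarding components only removes points, so it cannot decrease the minimum either. I also need $\min H$ just before $T_{i+1}$ to be at least its limit from the left; this follows from the monotonicity of $y$ on the smooth interval. Hence $y$ is nondecreasing across every surgery time, and $y(0)\geq\alpha_2R^{-1}$.

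\textbf{Comparison.} Let $z$ solve $z'=z^3/n$ with $z(0)=\alpha_2R^{-1}$, so that
\begin{equation*}
 z(t)^{-2}=\alpha_2^{-2}R^2-\frac{2t}{n}.
\end{equation*}
Equivalently, $w=y^{-2}$ satisfies $w'\leq-2/n$ on smooth intervals and does not increase at surgeries. Summing over the intervals gives
\begin{equation*}
 0<w(t)\leq\alpha_2^{-2}R^2-\frac{2}{n}\sum_i(T_{i+1}-T_i),
\end{equation*}
where the sum is truncated at $t$. Positivity of $w$ then yields \eqref{eq:surgery-total-time}.

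\textbf{Main obstacle.} The delicate point is handling the jump at surgery times rigorously, that is, ensuring the left limit of $\min H$ is controlled at $T_{i+1}$. I would phrase the argument entirely through $w=y^{-2}$, which is upper semicontinuous in the appropriate sense and monotone on each closed smooth piece. Arguing piece by piece, $w(T_{i+1}^+)\leq w(T_{i+1}^-)\leq w(T_i)-\tfrac2n(T_{i+1}-T_i)$, and induction closes the argument.
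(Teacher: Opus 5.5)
Your proposal is correct and follows essentially the paper's argument. You use the barrier inequality $m'\geq\frac1n m(m^2-n^2)$ for $\min H$ on each smooth interval, note that there is no downward jump at surgery times by \eqref{eq:surgery-minimum-H} and because discarding only removes points, and then integrate. The only difference is cosmetic: the paper integrates this ODE exactly to get $\frac1{2n}\log\frac{(n+\alpha_2R^{-1})^2}{\alpha_2R^{-1}(2n+\alpha_2R^{-1})}$ and then bounds it by $\frac n2\alpha_2^{-2}R^2$, while your passage to $y'\geq y^3/n$ and the linear quantity $w=y^{-2}$ lands on that bound directly and handles the surgery jumps cleanly.
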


\begin{proof}
  On every smooth interval, $m(t):=\min_{M_t}H$ satisfies  
  \begin{equation*}
  m'   \geq 
  \frac1n m(m^2-n^2)
  \end{equation*}
 in the barrier sense.  At a surgery time, \eqref{eq:surgery-minimum-H} shows that $m$ has no
  downward jump, while discarding components can only increase the minimum.
  Since $m(0)\geq n+\alpha_2R^{-1}$,  
  integration of the scalar inequality gives
  \begin{align*}
    \sum_i(T_{i+1}-T_i)
    &\leq
    \frac1{2n}
    \log
    \frac{(n+\alpha_2R^{-1})^2}
    {\alpha_2R^{-1}(2n+\alpha_2R^{-1})}\\
    &\leq
    \frac n2\alpha_2^{-2}R^2.
  \end{align*}
  For a locally finite sequence the same estimate applies to every finite
  prefix, and the conclusion follows by taking the supremum.
\end{proof}

\subsection{Convexity and cylindrical estimates}

\begin{theorem}[Convexity and cylindrical estimates through surgery]
  \label{thm:surgical-cylindrical-estimates}
  Assume that
  \begin{equation*}
  M_0\in\CC_\tau(R,\alpha),
  \qquad
  |A|^2\leq R^{-2}\quad\text{on }M_0.
  \end{equation*}
 For every $\varepsilon,\eta>0$, the dimensionless surgery parameters and the
  upper bound for the ratio of the surgery radius to $R$ may be chosen,
  depending only on   $n$, $\tau$, $\alpha$, $\varepsilon$, and $\eta$, so that every   surgically modified flow satisfies
  \begin{align}
    \lambda_1
    &\geq
    -\varepsilon H-C_\varepsilon R^{-1},
    \label{eq:surgical-convexity}\\
    |A|^2-\frac1{n-1}H^2
    &\leq
    \eta H^2+C_\eta R^{-2}.
    \label{eq:surgical-cylindrical}
  \end{align}
  Here
  \begin{equation*}
  C_\varepsilon
  =C_\varepsilon(n,\tau,\alpha,\varepsilon),
  \qquad
  C_\eta
  =C_\eta(n,\tau,\alpha,\eta),
  \end{equation*}
  and neither constant depends on the number or the times of the surgeries.
\end{theorem}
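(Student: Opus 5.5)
The plan is to rerun the proof of Theorem~\ref{thm:cylindrical} on the surgically modified flow, following \cite{HS09}*{Section~5} and \cite{LN21}*{Section~5.2}. Corollary~\ref{cor:convexity} then gives \eqref{eq:surgical-convexity} with no further work, because its derivation from the cylindrical estimate is purely pointwise and algebraic. So the whole task is \eqref{eq:surgical-cylindrical}.

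\textbf{Normalization and pointwise inputs.} First rescale to $R=1$ as in Subsection~\ref{subsec:space-form-normalization}. The rescaled ambient curvature is $-R^2\in[-1,0)$, so every constant of Section~\ref{sec:cylindrical} remains uniform. By Corollary~\ref{cor:surgery-class-preserved}, each time slice lies in $\CC_\tau(1,(\alpha_0^*,\alpha_1,\alpha_2))$, with a constant independent of the number of surgeries. Hence Proposition~\ref{prop:uniform-two-convexity}, Lemma~\ref{lem:simons-commutator-coercivity} and Proposition~\ref{prop:tau-poincare} apply on every slice with the fixed constant $\alpha_0^*$. Lemma~\ref{lem:surgery-time-upper} replaces \eqref{eq:normalized-time-upper} by a bound on the total elapsed time. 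The area stays at most $\alpha_1$, since surgery and discarding components only decrease it.

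\textbf{Integral estimates across surgery.} Fix $\eta$, work with $f=f_{\sigma,\eta/2}$, and redo Proposition~\ref{prop:lp-bound} and Lemma~\ref{lem:truncated-energy} on each smooth interval $[T_i,T_{i+1})$. The essential point is that the integrals $\int f_+^p\,d\mu$ and $\int f_k^p\,d\mu$ never jump up at a surgery time. I would choose $\eta_{\mathrm{surg}}\le\eta/2$ in Lemma~\ref{lem:standard-replacement}, so that \eqref{eq:surgery-cylindrical-margin} gives $f<0$ at every changed point. Removed or discarded regions only lower the integrals, and unchanged points keep their values. So $f_+$ and $f_k$ do not increase pointwise. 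The differential inequalities on each interval then add up to the same bounds \eqref{eq:lp-bound}, \eqref{eq:truncated-energy} and \eqref{eq:weighted-integrability}, with $T$ replaced by the total time. The level $k_0=1+\sup_{M_0}f_+$ is unchanged and still controls all times.

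\textbf{Stampacchia iteration.} The Michael--Simon inequality holds slice by slice. The spacetime set $\{f>k\}$ is measured with respect to a measure that is piecewise smooth in time, which is harmless. Hence the recursion \eqref{eq:stampacchia-recursion} and the Stampacchia lemma go through verbatim, giving $f\le C(n,\tau,\alpha,\eta)$. For the step ``$f>k_*$ on a set of positive measure'', points just after a surgery time are handled by right-continuity of the smooth flow on $[T_i,T_{i+1})$. Undoing the rescaling and applying Young's inequality gives \eqref{eq:surgical-cylindrical}.

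\textbf{Main obstacle.} The delicate step is making sure that nothing in the $L^p$ argument uses a property that surgery destroys. Two things need checking. First, the lower bound $H\ge\alpha_2$, which is needed to absorb the term $\kappa^2H^{\sigma}f_+^{p-1}$; it is preserved by \eqref{eq:surgery-minimum-H}. Second, the initial smallness used for $k_0$ must not be required again after surgery; this is supplied by the strict negative margin \eqref{eq:surgery-cylindrical-margin}. This is exactly why the order of parameter choice matters. One first fixes $\eta$ and $\varepsilon$, then fixes the replacement parameters of Lemma~\ref{lem:standard-replacement} with $\eta_{\mathrm{surg}}\le\eta/2$, and finally chooses $r/R$ small enough for \eqref{eq:surgery-radius-class-bound}. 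Done in this order, all constants depend only on $n,\tau,\alpha,\varepsilon,\eta$.
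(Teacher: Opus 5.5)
Your proposal is correct and follows essentially the same route as the paper. Both arguments rerun the $L^p$ and Stampacchia argument of Section~\ref{sec:cylindrical} on each smooth interval, use the strict margin \eqref{eq:surgery-cylindrical-margin} to rule out upward jumps of $\int f_+^p\,d\mu$ and $\int f_k^p\,d\mu$ at surgery times, use the area bound and Lemma~\ref{lem:surgery-time-upper} for the two global bounds, and then deduce convexity pointwise as in Corollary~\ref{cor:convexity}.

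One detail should be made explicit. The convexity step invokes the surgical cylindrical estimate with parameter $n\varepsilon^2/(n-1)$. So the replacement margin in Lemma~\ref{lem:standard-replacement} must be taken no larger than $\min\{\eta/2,\,n\varepsilon^2/(2(n-1))\}$, not merely $\eta/2$, as the paper states.
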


\begin{proof}
  It suffices to consider $0<\varepsilon,\eta<1/2$.  Choose the standard
  replacement so that Lemma~\ref{lem:standard-replacement} holds with
  parameter $\eta/2$.

  Use the function $f_{\sigma,\eta/2}$ from
  Section~\ref{sec:cylindrical}.  At a surgery time its positive part vanishes
  on every changed region by \eqref{eq:surgery-cylindrical-margin}, agrees
  with its pre-surgery value on the unchanged collars, and is removed on
  discarded components.  Hence $\int (f_{\sigma,\eta/2})_+^p\,d\mu$  
  has no upward jump at surgery.

  The differential inequalities of Section~\ref{sec:cylindrical} hold on
  every smooth interval.  The preceding jump inequality allows their
  standard integration through surgery times exactly as in
  \cite{HS09}*{Theorem~5.3} and \cite{LN21}*{Theorem~5.3}.  The area bound and
  Lemma~\ref{lem:surgery-time-upper} supply the two global bounds used there.
  Thus the same $L^p$ estimates and Stampacchia iteration give
  \begin{equation*}
  |A|^2-\frac1{n-1}H^2
  \leq
  \eta H^2+C_\eta R^{-2},
  \end{equation*}
  with a constant independent of the number of surgeries.  This proves
  \eqref{eq:surgical-cylindrical}.

  To obtain convexity, apply \eqref{eq:surgical-cylindrical} with parameter
  $n\varepsilon^2/(n-1)$.  The pointwise argument in the proof of
  Corollary~\ref{cor:convexity} gives \eqref{eq:surgical-convexity}, after
  increasing $C_\varepsilon$.
  Choosing the replacement parameters so that
  Lemma~\ref{lem:standard-replacement} applies simultaneously with
  parameters $\eta/2$ and $n\varepsilon^2/(2(n-1))$ proves
  \eqref{eq:surgical-convexity}.
\end{proof}

\subsection{Derivative estimates}

\begin{theorem}[Derivative estimates through surgery]
  \label{thm:estimates-through-surgery}
  Fix an integer $m_0\geq2$ and choose the standard replacement so that
  Lemma~\ref{lem:standard-replacement} holds with parameter $\vartheta_n/2$
  and derivative order $m_0$.  Assume that the first surgery time is not
  earlier than $\tau_nR^2$.  Then there exists $C=C(n,\tau,\alpha,m_0)<\infty$  
  such that every surgically modified flow satisfying the hypotheses of
  Theorem~\ref{thm:surgical-cylindrical-estimates} obeys
  \begin{align}
    |\nabla A|^2
    &\leq
    C(H^4+R^{-4}),
    \label{eq:surgical-gradient}\\
    |\nabla^2A|^2
    &\leq
    C(H^6+R^{-6})
    \label{eq:surgical-hessian}
  \end{align}
  for every $t\geq\tau_nR^2$.

  More generally, for every pair of nonnegative integers $a,b$ with
  $2a+b\leq m_0$,
  \begin{equation}\label{eq:surgical-higher-derivatives}
    |\nabla_t^a\nabla^bA|^2
    \leq
    C_{a,b}
    \left(
    H^{4a+2b+2}+R^{-4a-2b-2}
    \right),
  \end{equation}
  where $ C_{a,b}
  =  C_{a,b}(n,\tau,\alpha,m_0,a,b)$.  
  At a surgery time the derivatives are understood one-sidedly.
\end{theorem}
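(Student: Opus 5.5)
The plan is to rerun the maximum-principle arguments of Theorems~\ref{thm:gradient} and~\ref{thm:hessian} on each smooth interval $[T_i,T_{i+1})$, and to check that at every surgery time the relevant test quantities have no upward jump. For the gradient estimate we again use
\begin{equation*}
 F=\frac{|\nabla A|^2}{g_1g_2},
\end{equation*}
where $g_1,g_2$ are built from the cylindrical constant $C_{\vartheta_n}$. The only change is that this constant is now the one supplied by Theorem~\ref{thm:surgical-cylindrical-estimates} with $\eta=\vartheta_n$. That theorem gives $g_1\geq C_{\vartheta_n}R^{-2}$ and the bound~\eqref{eq:g1g2-lower} for the whole surgically modified flow, uniformly in the number of surgeries. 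The computation leading to~\eqref{eq:gradient-F-maximum} is purely local. So on each smooth interval, every interior positive maximum of $F$ satisfies $F\leq C_1(n,\alpha)$. The initial bound at $t=\tau_nR^2$ comes from~\eqref{eq:short-time-smoothing}; this is legitimate because the first surgery happens no earlier than $\tau_nR^2$.

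The key step is the jump condition at a surgery time. Points that are unchanged keep their value of $F$, and discarded components only remove points. At changed points, \eqref{eq:surgery-cylindrical-margin} with parameter $\vartheta_n/2$ gives
\begin{equation*}
 g_1\geq\Bigl(\frac{\vartheta_n}{2}+c_s\Bigr)\widetilde H^2,
\end{equation*}
and also $g_2\geq c\widetilde H^2$. The model bound~\eqref{eq:surgery-derivative-model} together with $\widetilde H\geq c_sr^{-1}$ gives $|\widetilde\nabla\widetilde A|^2\leq C_1^2r^{-4}\leq C\widetilde H^4$. Hence $F\leq C_2(n,\tau,\alpha)$ at changed points. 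The constants $c_s,C_m$ depend only on the fixed replacement and not on $r$, so $C_2$ is uniform. Taking $C=\max\{C_1,C_2,\text{initial bound}\}$, the first-failure argument across the finitely many surgery times in any compact time range gives $F\leq C$ for all $t\geq\tau_nR^2$. This proves~\eqref{eq:surgical-gradient}, exactly as at the end of Theorem~\ref{thm:gradient}.

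For the Hessian we use the combined quantity
\begin{equation*}
 Q=\frac{|\nabla^2A|^2}{H^5}+2(C+1)\frac{|\nabla A|^2}{H^3}-KH,
 \qquad K=\frac{2n(\alpha_2+n)(C+1)^2}{\alpha_2},
\end{equation*}
taken from the proof of Theorem~\ref{thm:hessian}. On smooth intervals it is a subsolution, because that computation only used~\eqref{s4.H}, the uniform two-convexity, and the gradient bound just established. The inputs hold along the surgically modified flow: \eqref{s4.H} follows from membership in $\CC_\tau(R,(\alpha_0^*,\alpha_1,\alpha_2))$ (Corollary~\ref{cor:surgery-class-preserved}), and two-convexity from Proposition~\ref{prop:uniform-two-convexity}.

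The jump at a surgery time is the main obstacle, because the $-KH$ term is not scale-invariant. At changed points, $\widetilde H$ may be larger than the nearby value of $H$, which lowers $Q$ and is harmless. The first two terms are bounded by $C r^{-1}\leq C'\widetilde H$ by~\eqref{eq:surgery-derivative-model}. So the jump is harmless as long as we only need the bound $Q\leq C''\widetilde H$ rather than a bound by a fixed constant times $R^{-1}$. I would therefore reformulate the maximum principle for the scale-invariant ratio $|\nabla^2A|^2/H^6$: first bound $Q$ by $\max\{\text{previous bound},\,C''\max H\}$, and then use $\min H\geq\alpha_2R^{-1}$ to close up. If that becomes awkward, the fallback is to follow \cite{HS09}*{Theorem~6.3} and \cite{LN21}*{Theorem~5.7}, which treat exactly this jump. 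There one uses that $|\nabla^2A|^2/H^5\leq CH$ holds with strict margin on the cap because $m_0\geq2$, and runs the argument with $\max(Q-C'''H,0)$ instead of $Q$.

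Finally, the higher derivative bound~\eqref{eq:surgical-higher-derivatives} for $2a+b\leq m_0$ follows by the same Bernstein induction as in Corollary~\ref{cor:higher-derivatives}. On smooth intervals we argue as before, and at surgery times the model bounds~\eqref{eq:surgery-derivative-model} up to order $m_0$ handle the changed region. Time derivatives are converted to spatial ones through the evolution equation, understood one-sidedly at surgery times.
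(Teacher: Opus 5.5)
Your gradient step and your higher-derivative step coincide with the paper's proof. For the gradient you use the quotient $F=|\nabla A|^2/(g_1g_2)$ with the surgical cylindrical constant, and bound it on changed regions via \eqref{eq:surgery-cylindrical-margin} and \eqref{eq:surgery-derivative-model}. For the Hessian, however, your first suggestion does not work and should be discarded.

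The maximum principle controls only $\sup Q$. If $K$ is left as in Theorem~\ref{thm:hessian}, a changed region may carry $Q$ up to $C''\widetilde H\sim r^{-1}$, where $r$ is the surgery radius. All you then get is
\begin{equation*}
 \frac{|\nabla^2A|^2}{H^5}\le KH+C''\max H .
\end{equation*}
This is not $\le CH$ at points where $H\sim R^{-1}$. Invoking $\min H\ge\alpha_2R^{-1}$ would require a bound on $R\max H$. That quantity is governed by the surgery thresholds (of order $H_3R$), which are chosen later, after and depending on these estimates. It is not controlled by $(n,\tau,\alpha,m_0)$, so this route is circular. A pointwise bound of the form $Q\le C''H$ cannot be propagated by a maximum principle for $Q$ alone; it has to be encoded as $Q-C''H\le0$.

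Your fallback does exactly this, and it is precisely the paper's argument: replace $K$ once by $K+C'''$. This keeps the subsolution inequality, because $(\partial_t-\Delta)H\ge\frac{\alpha_2}{n(\alpha_2+n)}H^3\ge0$ on every smooth interval of the surgically modified flow. You should state this point explicitly. On every changed region the model bounds give
\begin{equation*}
 \frac{|\widetilde\nabla^2\widetilde A|^2}{\widetilde H^5}+2(C+1)\frac{|\widetilde\nabla\widetilde A|^2}{\widetilde H^3}\le C'''\widetilde H ,
\end{equation*}
using only $\widetilde H\ge c_sr^{-1}$. So the enlarged barrier is nonpositive there, and its supremum never exceeds $\max\{\sup Q|_{t=\tau_nR^2},0\}\le CR^{-1}$. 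Taking the positive part is unnecessary. Commit to this version and the proof is complete, in agreement with the paper.
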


\begin{proof}
  On every smooth interval these are the estimates of
  Section~\ref{sec:gradient-estimates}.  It remains only to verify that the
  maximum-principle barriers can be restarted after surgery.

  On every changed region,
  \eqref{eq:standard-replacement-strict} and
  \eqref{eq:surgery-derivative-model} give
  \begin{equation*}
  H\simeq r^{-1},
  \qquad
  |\nabla^mA|
  \leq
  C_mr^{-m-1},
  \qquad
  0\leq m\leq m_0.
  \end{equation*}
  In particular,
  \begin{equation*}
  |\nabla A|^2\leq CH^4,
  \qquad
  |\nabla^2A|^2\leq CH^6.
  \end{equation*}
  The strict cylindrical margin makes both denominators in the quotient used
  in Theorem~\ref{thm:gradient} bounded below by a fixed positive multiple of
  $H^2$.  The quotient is therefore uniformly bounded on every inserted cap.

  For the Hessian estimate the same model bounds give
  \begin{equation*}
  \frac{|\nabla^2A|^2}{H^5}
  +C\frac{|\nabla A|^2}{H^3}
  \leq CH.
  \end{equation*}
  The coefficient of the negative $H$ term in the barrier from
  Theorem~\ref{thm:hessian} may be enlarged once, without changing its
  evolution inequality, so that this barrier is nonpositive on every changed
  region.  There is no jump on unchanged collars, while discarded components
  only remove points.  The maximum-principle arguments of
  \cite{HS09}*{Theorem~6.3} and \cite{LN21}*{Theorems~5.4--5.5} therefore
  restart after every surgery with constants independent of the number of
  surgeries.

  Higher spatial and mixed time derivatives follow from the standard
  induction in \cite{HS09}*{Theorem~6.3 and Corollary~6.4}.  Since the
  hyperbolic
  curvature tensor is parallel, the ambient terms are lower order, and the
  cap bounds \eqref{eq:surgery-derivative-model} provide the required
  post-surgery bounds up to order $m_0$.  This proves
  \eqref{eq:surgical-higher-derivatives}.
\end{proof}

\subsection{Neck detection}

For a surgically modified flow, a subset of a later time-slice is called
\emph{trackable} over a time interval if its material parametrization
passes only through unchanged regions and is identified canonically across
every intervening surgery time.  A backward parabolic neighborhood is
called \emph{surgery-free} if all of its spatial slices are trackable and
their tracks are disjoint from every changed or discarded region.  On a
surgery-free neighborhood the pieces of the flow paste together to form a
single smooth mean curvature flow.

\begin{theorem}[Neck detection for surgically modified flows]
  \label{thm:surgical-neck-detection}
  Fix $\eps>0$, $k\in\mathbb N$, $L\geq10$, and $\Theta>0$, and choose
the standard replacement with $m_0\geq k+2$.  There exist
  \begin{equation*}
  \eta_{\mathrm{surg}}
  =
  \eta_{\mathrm{surg}}(n,\tau,\alpha,\eps,k,L,\Theta)>0
  \end{equation*}
  and
  \begin{equation*}
  H_{\mathrm{surg}}
  =
  H_{\mathrm{surg}}(n,\tau,\alpha,\eps,k,L,\Theta)<\infty
  \end{equation*}
  with the following property.

  Let $(p,t)$ lie in a surgically modified flow satisfying
  Theorem~\ref{thm:estimates-through-surgery}, assume
  $t\geq\tau_nR^2$, and put $r=(n-1)/H(p,t)$.  
  Suppose that
  \begin{equation*}
  H(p,t)\geq H_{\mathrm{surg}}R^{-1},
  \qquad
  \lambda_1(p,t)\leq\eta_{\mathrm{surg}}H(p,t),
  \end{equation*}
  and that $\mathcal P\bigl(p,t,2(L+2)r,\Theta r^2\bigr)$  
  is surgery-free.  Then $p$ is the center of an $(\eps,k,L)$-neck of
  radius $r$.  If $\Theta\geq L^2$, the rescaled flow is $\eps$-close in
  parabolic $C^k$ norm on the corresponding backward cylinder to a round
  shrinking cylinder whose radius at time zero is one.
\end{theorem}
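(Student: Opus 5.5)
The plan is to repeat the contradiction-compactness argument of Theorem~\ref{thm:neck-detection}, using the estimates of Theorems~\ref{thm:surgical-cylindrical-estimates} and \ref{thm:estimates-through-surgery} in place of their smooth-flow counterparts. The surgery-free hypothesis makes the backward neighborhood a genuine smooth flow, so the local argument applies.

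\textbf{Setting up the contradiction.} Suppose the conclusion fails for some fixed $\eps,k,L,\Theta$. Then there are surgically modified flows with scales $R_j$ and points $(p_j,t_j)$, $t_j\geq\tau_nR_j^2$, such that:
\begin{itemize}
\item $H(p_j,t_j)R_j\geq j$ and $\lambda_1(p_j,t_j)\leq H(p_j,t_j)/j$;
\item $\mathcal P(p_j,t_j,2(L+2)r_j,\Theta r_j^2)$ is surgery-free;
\item $p_j$ is not the center of an $(\eps,k,L)$-neck of radius $r_j$.
\end{itemize}
By the definition of surgery-free, the flow restricted to this backward neighborhood pastes into one smooth mean curvature flow. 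On it, the derivative bounds \eqref{eq:surgical-gradient}--\eqref{eq:surgical-higher-derivatives} hold with constants independent of the number of surgeries. Here $m_0\geq k+2$ supplies enough derivatives for $C^k$ convergence.

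\textbf{Curvature comparison and compactness.} The first-failure argument of Corollary~\ref{cor:parabolic-control} uses only these derivative bounds along material tracks and spatial geodesics, and surgery-freeness keeps those paths inside the smooth piece. It therefore gives $H\simeq H(p_j,t_j)$ on a normalized cylinder of definite size. Next:
\begin{itemize}
\item Pull back by $\exp_{X(p_j,t_j)}$ and rescale parabolically by $r_j^{-1}$.
\item The ambient curvature becomes $-r_j^2\to0$ in normal coordinates, since $r_j/R_j\to0$ and $R_j\leq1$.
\item Arzel\`a--Ascoli gives smooth subsequential convergence to a Euclidean mean curvature flow on a fixed backward neighborhood.
\end{itemize}
In the rescaled convexity and cylindrical estimates \eqref{eq:surgical-convexity}--\eqref{eq:surgical-cylindrical}, the additive terms carry factors $r_j/R_j$ and $(r_j/R_j)^2$, which tend to zero. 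Letting $\varepsilon,\eta\to0$ after passing to the limit yields
\[
A_\infty\geq0,\qquad |A_\infty|^2\leq \tfrac1{n-1}H_\infty^2,\qquad \lambda_{1,\infty}(0,0)=0,\qquad H_\infty(0,0)=n-1.
\]
Hamilton's strong maximum principle then splits off a line, and equality in Cauchy's inequality together with Codazzi identifies the limit as the unit round shrinking cylinder, exactly as in Theorem~\ref{thm:neck-detection}.

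\textbf{Extending to the full buffer (the main obstacle).} We must upgrade convergence from the small neighborhood given by Corollary~\ref{cor:parabolic-control} to all of $\mathcal P(0,0,2(L+2),\Theta)$. I would iterate as in Theorem~\ref{thm:neck-detection}:
\begin{itemize}
\item On compact pieces of the limit cylinder, $H$ is pinched between positive constants.
\item Smooth convergence transfers this pinching to the approximating flows, which restarts the comparison on a region enlarged by a definite amount.
\item Finitely many steps and a diagonal subsequence cover the whole buffer.
\end{itemize}
The new point to check at every step is that the enlarged region stays inside the surgery-free cylinder, which holds because it sits inside the prescribed $\mathcal P(p_j,t_j,2(L+2)r_j,\Theta r_j^2)$. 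Then $C^k$-closeness of the time-zero slice on the buffer, combined with the normal-neck parametrization of \cite{HS09}*{Theorems~3.12, 3.14}, shows $p_j$ is the center of an $(\eps,k,L)$-neck for large $j$, a contradiction. When $\Theta\geq L^2$, the same convergence over $[-L^2,0]$ gives the parabolic closeness to the shrinking cylinder.
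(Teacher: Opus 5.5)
Your proposal is the paper's proof: on the surgery-free neighborhood the flow is smooth, Theorems~\ref{thm:surgical-cylindrical-estimates} and~\ref{thm:estimates-through-surgery} replace their smooth counterparts, and the blow-up, strong maximum principle, buffer extension and $m_0\geq k+2$ compactness of Theorem~\ref{thm:neck-detection} go through unchanged. Your explicit check that the enlarged regions stay inside the surgery-free cylinder is exactly the step the paper leaves implicit.

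One point that both you and the paper pass over: as stated, Theorem~\ref{thm:surgical-cylindrical-estimates} supplies the convexity and cylindrical estimates only at the levels for which the surgery parameters were tuned. So ``letting the parameters tend to zero'' should be read as running your contradiction argument over flows whose surgery parameters are tuned for levels tending to zero. That argument fixes a required level depending on $(\eps,k,L,\Theta)$, which then belongs among the accuracy parameters fixed in Section~\ref{sec:parameter-choice}.
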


\begin{proof}
  On the surgery-free neighborhood the flow is smooth.  The convexity and
  cylindrical estimates required in the proof of
  Theorem~\ref{thm:neck-detection} are supplied by
  Theorem~\ref{thm:surgical-cylindrical-estimates}, while the curvature and
  derivative bounds are supplied by
  Theorem~\ref{thm:estimates-through-surgery}.  The contradiction and
  blow-up argument of Theorem~\ref{thm:neck-detection} therefore applies on
  this neighborhood.  This is the surgery-free alternative in
  \cite{HS09}*{Lemma~7.4} and \cite{LN21}*{Theorem~5.6}.  Under the rescaling,
  the ambient sectional curvature tends to zero, so the limit is the same
  round Euclidean shrinking cylinder as in the smooth proof.  The assumption
  $m_0\geq k+2$ supplies the required $C^k$ compactness.
\end{proof}

\section{Existence of a terminating surgically modified flow}
\label{sec:parameter-choice}

We now complete the construction of the flow.  We say that a surgically
modified flow \emph{terminates} if, at a smooth time or immediately after
the final standard replacements, every remaining connected component is
recognized as diffeomorphic to $\Sph^n$ or
$\Sph^{n-1}\times\Sph^1$ and is discarded.

The construction is the surgery algorithm of Huisken--Sinestrari
\cite{HS09}*{Sections~3, 7, and~8}.  We choose its parameters in the same
order as in \cite{HS09}*{pp.~208--210}.  Fix first a finite derivative order
$m_0\geq2$ large enough for all neck-detection tests and for the normal-neck
and cap construction.  Let
$\alpha_0^*$ be the permanent pinching constant supplied by
Lemma~\ref{lem:surgery-taubiric-preservation} for the incoming constant
$\alpha_0$.
Since $\alpha_0^*\leq\alpha_0$, the initial hypersurface also belongs to the
reduced class $\CC_\tau(R,(\alpha_0^*,\alpha_1,\alpha_2))$, which is preserved by
Corollary~\ref{cor:surgery-class-preserved}.  We then fix the finite
collection of accuracy parameters needed in the neck-continuation argument
and refine the neck and surgery parameters so that all conclusions of
Section~\ref{sec:standard-surgery} hold simultaneously.

Choose curvature thresholds
\begin{equation*}
H_3>H_2>H_1\gg R^{-1}
\end{equation*}
in the order prescribed by the surgery construction, and put $r_*:={(n-1)}/{H_1}$. 
Increase $H_1$ so that $r_*$ is smaller than every surgery-radius
threshold in Section~\ref{sec:standard-surgery}, including the
threshold required to preserve the reduced surgery class, and also
smaller than the normal-coordinate threshold needed in the Riemannian
neck-continuation argument.  The
ratios $H_2/H_1$ and $H_3/H_2$ are then chosen sufficiently large for the
neck-selection procedure, and so that every new standard cap has mean
curvature strictly below $H_2$.  We also choose $H_3$ larger than the
short-time upper curvature bound on $[0,\tau_nR^2]$, so that the first
surgery occurs after $\tau_nR^2$.

\begin{proposition}[Standard neck continuation and selection]
\label{prop:standard-neck-continuation}
For the parameters fixed above, the neck-continuation and neck-selection
construction of \cite{HS09}*{Lemmas~7.4, 7.10, and~7.12,
Theorem~8.2, and the proof of Theorem~1.1 on pp.~216--218} applies to the
surgically modified flow.  Whenever $H_{\max}$ first reaches $H_3$, it produces
finitely many pairwise disjoint normal necks of radius $r_*$.  Standard
replacement on these necks, followed by discarding the recognized
high-curvature components, gives
\begin{equation}\label{eq:Hmax-after-surgery}
  H_{\max}(t+)\leq H_2.
\end{equation}
All constants are uniform over the surgery times.
\end{proposition}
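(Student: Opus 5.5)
The plan is to run the Huisken--Sinestrari argument at the first time $t$ with $H_{\max}(t)=H_3$. At that time I will locate a finite family of disjoint necks and check two things: the surgery-time estimates of Section~\ref{sec:standard-surgery} apply on those necks, and after replacement plus discarding, every remaining point has $H\leq H_2$.

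First I will show that every point with $H\geq H_2$ lies on a neck or in a recognizable high-curvature component. Since $t\geq\tau_nR^2$ by the choice of $H_3$, Corollary~\ref{cor:parabolic-control} and Theorem~\ref{thm:estimates-through-surgery} apply. At a point $p$ with $H(p,t)\geq H_1$, the comparison \eqref{eq:parabolic-H-comparison} gives a backward cylinder of size $\sim H^{-1}$ and time $\sim H^{-2}$ on which $H\geq H(p,t)/10$. That cylinder avoids every changed region of earlier surgeries, since each inserted cap had $H<H_2$ and, by \eqref{eq:surgery-derivative-model}, still had moderate curvature on the relevant interval; so it is surgery-free once $H_2/H_1$ is large.

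Next I split into two cases. If $\lambda_1\leq\eta_{\mathrm{surg}}H$, Theorem~\ref{thm:surgical-neck-detection} makes $p$ the center of an $(\eps,k,L)$-neck. Otherwise the convexity estimate \eqref{eq:surgical-convexity} and the two-convexity of Proposition~\ref{prop:uniform-two-convexity} show that the region is uniformly convex at scale $H^{-1}$. I will then carry out neck continuation as in \cite{HS09}*{Lemmas~7.10, 7.12}. Starting from a neck of radius comparable to $H^{-1}$, I extend it along its axis, using the normal-neck parametrization, until either the radius grows to $r_*=(n-1)/H_1$ or the neck closes off in a convex cap. Each extension step requires the $\lambda_1$-alternative to be reapplied at every point. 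The hyperbolic corrections enter only through normal coordinates at scale $r\leq r_*$, and by \eqref{eq:hyperbolic-normal-metric} they are $O(r_*^2)$, which is absorbed because $r_*$ is below the normal-coordinate threshold.

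The outcome of continuation is that every point with $H\geq H_2$ falls into one of two classes. Either it is separated from the low-curvature part by a normal neck of radius exactly $r_*$, or it lies in a component made of necks and convex caps, which is therefore diffeomorphic to $\Sph^n$ or $\Sph^{n-1}\times\Sph^1$. I will choose a maximal disjoint subfamily of the radius-$r_*$ necks, spaced a fixed multiple of $L r_*$ apart, and cut at them. Compactness and the area bound, together with each cut region having area at least $c r_*^n$, make this family finite.

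After replacing these necks by standard caps, the new caps have $H<H_2$ by the choice of $H_2/H_1$. The pieces on the high-curvature side of each cut are exactly the recognized components, and I discard them. Every remaining point therefore had $H<H_2$ already or lies on a cap, which gives \eqref{eq:Hmax-after-surgery}. The constants depend only on the fixed parameters, since all inputs (cylindrical, convexity, derivative, neck detection) are uniform in the number of surgeries.

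The step I expect to be the main obstacle is neck continuation, and specifically guaranteeing that the backward neighborhoods used along the continuation are surgery-free. An earlier cap might lie in the past of a current high-curvature point. I would handle this as in \cite{HS09}*{Lemma~7.4}: I would use the cap derivative bounds and the bound $H<H_2$ on inserted caps to show that any point whose backward cylinder meets a recent cap has curvature controlled by the cap. Such a point must then itself lie in a convex region near the cap, so continuation stops there harmlessly.
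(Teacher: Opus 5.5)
Your route is the paper's: reduce to the Huisken--Sinestrari construction and absorb the $O(r_*^2)$ normal-coordinate error, which you handle correctly. The gap is in how you treat backward neighbourhoods that reach earlier surgeries.

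Your first step claims that the cylinder at any point with $H\geq H_1$ is surgery-free because inserted caps had $H<H_2$. That inference fails. By \eqref{eq:standard-replacement-strict} and \eqref{eq:surgery-derivative-model}, every changed region has $c_sr_*^{-1}\leq H\leq Cr_*^{-1}$ when it is inserted, so caps sit at curvature comparable to $H_1$. The continuation has to end on necks of radius $r_*$, that is, at points with $H\sim H_1$. At such points the detection neighbourhood $\mathcal P(p,t,2(L+2)r,\Theta r^2)$ can meet a previous replacement, whatever the ratio $H_2/H_1$.

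Your comparison argument can be partly rescued by using the cap upper bound in place of $H<H_2$. It then gives surgery-freeness at points with $H\geq H_2$, once $H_2/H_1$ is large depending on $L$, $\Theta$ and $c_\#$. But that only starts the continuation.

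The fallback in your last paragraph is also wrong as stated. A point on the surviving neck or on the bent collar next to an earlier cap has $\lambda_1/H$ small, while its neighbourhood meets the changed region, so it does not lie in a convex region. What is needed is that the flow keeps the neck-plus-cap structure of the replacement near an earlier surgery. Then such points are still neck centres or lie in the region controlled by the evolved cap.

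This is the previous-cap alternative of \cite{HS09}*{Lemmas~7.10 and~7.12}. Lemma~7.4, which you invoke, is the surgery-free alternative, supplied here by Theorem~\ref{thm:surgical-neck-detection}. Lemmas~7.10 and~7.12 apply only to replacements having the surgery properties (s1)--(s3) of \cite{HS09}. Verifying these is the step your proposal omits; the paper obtains them from the fixed cap geometry together with \eqref{eq:standard-replacement-strict}--\eqref{eq:surgery-derivative-model}. With that in hand, the normal-neck integration of \cite{HS09}*{Proposition~7.18 and Theorem~8.2} and the selection argument on pp.~216--218 give \eqref{eq:Hmax-after-surgery}, as in the paper.
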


\begin{proof}
The required uniform two-convexity follows from
Proposition~\ref{prop:uniform-two-convexity} and
Corollary~\ref{cor:surgery-class-preserved}.  The convexity and cylindrical
estimates are given by Theorem~\ref{thm:surgical-cylindrical-estimates}, and
the derivative estimates are given by
Theorem~\ref{thm:estimates-through-surgery}.  If a backward continuation stays
disjoint from all earlier changed regions, Theorem~\ref{thm:surgical-neck-detection}
gives the surgery-free alternative of \cite{HS09}*{Lemma~7.4}.  If it meets a
previous replacement, the fixed cap geometry and
\eqref{eq:standard-replacement-strict}--\eqref{eq:surgery-derivative-model}
give the surgery properties \textup{(s1)--(s3)} on pp.~199--200 of \cite{HS09}.  The previous-cap
alternative then follows from \cite{HS09}*{Lemmas~7.10 and~7.12}.

The integration and continuation of normal necks are given in
\cite{HS09}*{Proposition~7.18 and Theorem~8.2}.  Only the local
ambient-metric changes from \cite{BH17}*{Section~8} are needed here.  Compare
also the spherical mean curvature flow argument in
\cite{LN21}*{Section~6}.  After rescaling a surgery neck of radius $r_*$ to
unit size, the ambient sectional curvature is $-r_*^2$, and
\eqref{eq:hyperbolic-normal-metric} gives a metric error of order $r_*^2$.
All accuracy parameters have already been fixed, so increasing $H_1$ absorbs
these errors.  The neck selection in
\cite{HS09}*{the proof of Theorem~1.1 on pp.~216--218} gives the stated
conclusion.
\end{proof}

\begin{theorem}[Existence and termination]
  \label{thm:terminating-surgery-flow}
  Let $M_0\in\CC_\tau(R,\alpha)$ satisfy $ |A|^2\leq R^{-2}$ on $M_0$.  
  For the fixed parameters chosen above, there exists a surgically modified
  mean curvature flow starting from $M_0$ which terminates after finitely
  many surgery times.  Moreover,
  \begin{equation}\label{eq:terminating-flow-time}
    \sum_i(T_{i+1}-T_i)
    \leq
    \frac n2\alpha_2^{-2}R^2.
  \end{equation}
\end{theorem}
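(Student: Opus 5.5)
The proof runs the Huisken--Sinestrari surgery algorithm with the fixed parameters and then argues termination from area loss and the time bound. First, since $\alpha_0^*\leq\alpha_0$, the initial hypersurface $M_0$ also lies in the reduced class $\CC_\tau(R,(\alpha_0^*,\alpha_1,\alpha_2))$. Corollary~\ref{cor:surgery-class-preserved} then keeps every later stage of the construction in this class. We run the smooth flow until the first time $H_{\max}$ reaches $H_3$. Such a time exists whenever the smooth flow is not already extinct, because Lemma~\ref{lem:time-upper} bounds the maximal smooth time and $H$ must blow up at a singular time. Since $H_3$ exceeds the short-time bound on $[0,\tau_nR^2]$, this first time lies after $\tau_nR^2$, so Theorems~\ref{thm:surgical-cylindrical-estimates} and~\ref{thm:estimates-through-surgery} apply.

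At each time where $H_{\max}=H_3$, Proposition~\ref{prop:standard-neck-continuation} produces finitely many disjoint normal necks of radius $r_*$. Performing standard replacements on them and discarding the recognized components gives $H_{\max}\leq H_2$ after surgery, by \eqref{eq:Hmax-after-surgery}. We then restart the smooth flow and iterate. Three properties of each iterate are needed:
\begin{enumerate}
\item The restarted hypersurface stays in the surgery class, by Lemma~\ref{lem:standard-replacement} and the choice of $r_*$ below the threshold \eqref{eq:surgery-radius-class-bound}.
\item The estimates restart with uniform constants, by Theorems~\ref{thm:surgical-cylindrical-estimates} and~\ref{thm:estimates-through-surgery}.
\item After each surgery the flow runs for a definite positive time before $H_{\max}$ again reaches $H_3$. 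This follows from Corollary~\ref{cor:parabolic-control}: the bound $|\partial_t H|\leq \tfrac12c_\#^2H^3$, together with the gap $H_3/H_2$, gives a lower bound of order $H_3^{-2}$ on the time between surgeries.
\end{enumerate}
The third property shows the surgery times are locally finite, so the sequence is a surgically modified flow in the sense defined above.

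Finiteness comes from area. Each surgery is performed at the common radius $r_*$ and removes at least $c_sr_*^n$ of area per neck, by Lemma~\ref{lem:standard-replacement}. Between surgeries the area does not increase, by \eqref{s2.evl-dmu}. Since the initial area is at most $\alpha_1R^n$, there can be at most $\alpha_1R^n/(c_sr_*^n)$ surgeries in total. After the last surgery the flow is smooth. Lemma~\ref{lem:surgery-time-upper} bounds the total elapsed time by $\frac n2\alpha_2^{-2}R^2$, which is \eqref{eq:terminating-flow-time}. Hence the flow cannot continue indefinitely, and $H_{\max}$ must reach $H_3$ again on any remaining component. Because no further surgery is allowed, the neck-selection step must instead recognize every remaining component as diffeomorphic to $\Sph^n$ or $\Sph^{n-1}\times\Sph^1$ and discard it. This is exactly termination.

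The main obstacle is this last step: showing that when $H_{\max}$ hits $H_3$, the procedure either finds necks to cut or recognizes whole components. This is the case analysis in \cite{HS09}*{proof of Theorem~1.1}. It applies here because Proposition~\ref{prop:standard-neck-continuation} already packages that alternative uniformly over the surgery times, so only the bookkeeping above remains.
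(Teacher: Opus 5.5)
Your proposal is correct and follows essentially the paper's route. The area loss of at least $c_sr_*^n$ per replacement at the fixed radius bounds the number of neck replacements, and Lemma~\ref{lem:surgery-time-upper} gives \eqref{eq:terminating-flow-time}. The blow-up of $\min H$ together with the continuation criterion then forces $H_{\max}$ to reach $H_3$ on any surviving component, where Proposition~\ref{prop:standard-neck-continuation} must discard it.

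The one point to tighten is that the area count bounds only neck replacements, not discard-only trigger events. The paper controls these by noting that at most one plus the number of replacements components are ever created, and each discard-only event removes at least one. Your uniform lower bound on the time between trigger events, combined with the total time bound, gives the same finiteness, but you should state that combination explicitly.
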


\begin{proof}
  The inductive construction is the one in \cite{HS09}*{Section~8}, with the
  Riemannian modifications described above.  Whenever $H_{\max}$ reaches
  $H_3$, Proposition~\ref{prop:standard-neck-continuation} gives the required
  disjoint surgery necks and
  \eqref{eq:Hmax-after-surgery} allows the smooth flow to restart.
  Corollary~\ref{cor:surgery-class-preserved} and
  Theorems~\ref{thm:surgical-cylindrical-estimates}
  and~\ref{thm:estimates-through-surgery} keep all constants uniform.

  Each standard replacement is performed at the fixed radius $r_*$ and
  decreases the area by at least $c_sr_*^n$.  Smooth mean curvature flow and
  discarding components do not increase area.  Since $|M_0|\leq\alpha_1R^n$,  
  the total number of neck replacements is finite.  Cutting one neck increases
  the number of connected components by at most one.  Since the initial
  hypersurface is connected, the total number of components created during
  the construction is therefore at most one plus the total number of neck
  replacements.  A trigger event with no replacement discards at least one
  component.  Hence only finitely many discard-only events and only finitely
  many surgery times can occur.

  Suppose that a nonterminal component survived after the last event.  While
  $H_{\max}<H_3$, Proposition~\ref{prop:uniform-two-convexity} gives
  $|A|^2\leq nH_3^2$, so the standard continuation criterion extends the
  smooth flow. On the other hand, the minimum mean curvature satisfies the same differential inequality used in Lemma~\ref{lem:surgery-time-upper},
\begin{equation*}
    m'\geq\frac1n m(m^2-n^2),
\end{equation*}
and therefore becomes unbounded in finite additional time.
Consequently $H_{\max}$ must reach $H_3$ before that time.
Proposition~\ref{prop:standard-neck-continuation} would then produce
another replacement or discard event, contradicting the choice of the
last event.  Hence the flow terminates.  Finally,
Lemma~\ref{lem:surgery-time-upper} gives
\eqref{eq:terminating-flow-time}.
\end{proof}

\section{Topology and proof of the main theorem}
\label{sec:topology}

We now recover the topology of the initial hypersurface from the terminating
surgery flow and complete the proof of Theorem~\ref{thm:taubiric-main}.   Suppose first that $n=3$ and $\tau=2$.  In dimension three, positive
  $2$-bi-Ricci curvature is exactly positive Ricci curvature.  By
  Lemma~\ref{lem:taubiric-mean-convex}, there is a global normal with $H>3$.
  In a principal frame,
  \begin{equation*}
  \Ric(e_i,e_i)=\lambda_i(H-\lambda_i)-2>0.
  \end{equation*}
  If $\lambda_i\leq0$, then $H-\lambda_i>0$ and the right-hand side is at most
  $-2$, a contradiction.  Thus all principal curvatures are positive.  The
  hyperbolic Hadamard--Stoker theorem \cite{Alexander77} shows that $X_0$ is an
  embedded sphere bounding a convex ball.  The smooth convergence theorem of
  Andrews--Chen \cite{AC17}*{Theorem~2} then shows that the ordinary mean
  curvature flow converges to a round point.  This proves the theorem at the
  endpoint with no surgery times.

  We may now assume that $\tau\in\mathcal I_n$.  By
  Lemma~\ref{lem:taubiric-mean-convex}, after choosing the orientation we have
  $H>n$.  Since $M_0$ is compact and has strictly positive $\tau$-bi-Ricci
  curvature, there exists $\alpha_0>0$ such that
  \begin{equation*}
  P_{ij}^{(\tau)}
  \geq
  \alpha_0H|\lambda_p-\lambda_q|
  \end{equation*}
  for every $i\ne j$ and all $p,q$.  Choose $R\leq 1$ so that $ |A|^2\leq R^{-2}$  
  on $M_0$, and then choose $\alpha_1,\alpha_2>0$ such that
  \begin{equation*}
  |M_0|\leq\alpha_1R^n,
  \qquad
  H-n\geq\alpha_2R^{-1}.
  \end{equation*}
  Thus
  \begin{equation*}
  M_0\in\CC_\tau(R,\alpha),
  \qquad
  \alpha=(\alpha_0,\alpha_1,\alpha_2).
  \end{equation*}

  Theorem~\ref{thm:taubiric-preservation} preserves the quantitative
  $\tau$-bi-Ricci condition on every smooth time interval.  The one-time
  reduction $\alpha_0\mapsto\alpha_0^*$ and the preservation of the reduced
  surgery class are built into Sections~\ref{sec:standard-surgery}
  and~\ref{sec:parameter-choice}.
  Theorem~\ref{thm:terminating-surgery-flow}
  therefore produces a surgically modified mean curvature flow starting from
  $M_0$ and terminating after finitely many surgery times.

  It remains to recover the topology.  A standard neck replacement cuts along
  an embedded copy of $\Sph^{n-1}$ and caps the resulting boundary components
  by $n$-balls.  The topological effect of reversing such a replacement is
  described in \cite{HS09}*{Proposition~3.23}: either two connected components
  are joined by a connected sum, or a single component acquires a
  $\Sph^{n-1}\times\Sph^1$ summand.

  The terminal alternatives in the surgery algorithm are a spherical or
  capped-neck component, a closed normal neck diffeomorphic to
  $\Sph^{n-1}\times\Sph^1$, or a strictly convex component.  In the last
  case, the hyperbolic Hadamard--Stoker theorem
  \cite{Alexander77} applies and shows that a closed locally strictly convex
  hypersurface immersion in hyperbolic space is an embedded sphere bounding a
  convex ball.  Since the surgery process contains only finitely many
  replacements, reversing them one at a time and using that $M$ is connected
  gives
  \begin{equation*}
  M\cong\Sph^n
  \quad\text{or}\quad
  M\cong
  \mathop{\#}_{j=1}^{k}
  \bigl(\Sph^{n-1}\times\Sph^1\bigr)
  \end{equation*}
  for some $k\geq1$.

  Suppose now that $X_0$ is embedded and bounds a compact domain $\Omega_0$.
  The standard replacement can be chosen to preserve embeddedness.  The
  Euclidean normal-neck result is
  \cite{HS09}*{Proposition~3.25 and Theorem~3.26}.  At a surgery neck,
  \eqref{eq:hyperbolic-normal-metric} shows that the rescaled hyperbolic metric
  is $C^{k_s}$-close to the Euclidean metric with error $O(r_*^2)$.  For a
  sufficiently large surgery curvature scale, the replacement therefore lies
  in the same embedded normal tube and both caps lie on the mean-convex side.
  This is the Riemannian modification used in \cite{BH17}*{Section~7}.  Thus
  every surgery slice remains embedded and bounds a compact domain.

  At the terminal time, the remaining domains form a finite disjoint union of
  balls and solid tubes.  Each solid tube is a ball with one one-handle.
  Reversing a neck replacement either attaches a one-handle to one component
  or joins two components by a boundary connected sum.  Reversing all
  surgeries and using the connectedness of $\Omega_0$ therefore reconstructs
  $\Omega_0$ as a ball with finitely many one-handles.   This completes the proof.


\section*{Acknowledgements}
This work was supported by the National Key Research and Development Program of China (2021YFA1001800), National Natural Science Foundation of China (No.~12531002) and the Fundamental Research Funds for the Central Universities.  The third author was also supported by the China Postdoctoral Science Foundation (Grant No.~2025M783146). The authors used ChatGPT (OpenAI) during the preparation of this 
manuscript for language editing and for comments on the clarity,
exposition, and organization of the presentation. All suggestions were
independently assessed, and all mathematical arguments, statements,
calculations and references were independently verified by the authors,
who take full responsibility for the content of the manuscript.

\end{document}